\newtheorem{thm}[equation]{Theorem}
\newtheorem{lemma}[equation]{Lemma}
\newtheorem{Expectation}[equation]{Expectation}
\newtheorem{prop}[equation]{Proposition}
\newtheorem{cor}[equation]{Corollary}
\newtheorem{hyp}[equation]{Hypothesis}
\newtheorem{defn}[equation]{Definition}
\newtheorem{conj}[equation]{Conjecture}
\newtheorem*{thm*}{Theorem}
\newtheorem*{rmk*}{Remark}
\theoremstyle{remark}
\newtheorem{remark}[equation]{Remark}
\theoremstyle{remark}
\newtheorem{sketch}[equation]{Sketch of the proof}
\newcommand \C[1]{{\mathcal #1}}
\newcommand \bC{{\mathbb C}}
\newcommand \bF{{\mathbb F}}
\newcommand \bR{{\mathbb R}}
\newcommand \RR{{\mathbb R}}
\newcommand \bZ{{\mathbb Z}}
\newcommand \ZZ{{\mathbb Z}}
\newcommand \QQ{{\mathbb Q}}
\newcommand \Qlb{\bar{{\mathbb Q}}_\ell}
\newcommand \ra{{\rightarrow}}
\newcommand{\isoarrow}{{~\overset\sim\longrightarrow~}}
\newcommand\CA{{\C A}}
\newcommand\CB{{\C B}}
\newcommand\CH{{\C H}}
\newcommand\CC{{\C C}}
\newcommand\CL{{\C L}}
\newcommand\CU{{\C U}}
\newcommand\ep{{\epsilon}}
\newcommand\om{{\omega}}
\newcommand\al{{\alpha}}
\newcommand\ft{{\mathfrak t}}
\newcommand\Hom{\operatorname{Hom}}
\newcommand\Ind{\operatorname{Ind}}
\newcommand\ad{\mathbf{A}}
\newcommand\hG{G^\vee}
\newcommand\LGr{{}^LG}
\newcommand\LGad{{}^LG^{\rm ad}}
\newcommand\val{\mathsf{val}}
\def\<{\langle} 
\def\>{\rangle}
\numberwithin{equation}{section}
\begin{document}

\author{Dan Ciubotaru}
\thanks{D. Ciubotaru was partially supported by EPSRC EP/V046713/1(2021).
}

\address{Dan Ciubotaru\\Mathematical Institute, Oxford University, Oxford OX2 6GG, UK
}
 \email{dan.ciubotaru@maths.ox.ac.uk}

\author{Michael Harris}
\thanks{M. Harris  was partially supported by NSF Grants DMS-2001369 and DMS-1952667, and by a Simons Foundation Fellowship, Award number  663678
}

\address{Michael Harris\\
Department of Mathematics, Columbia University, New York, NY  10027, USA}
 \email{harris@math.columbia.edu}
 
 \title[On the generalized Ramanujan conjecture over function fields]{On the generalized Ramanujan conjecture over function fields}

\maketitle

\begin{abstract} Let  $G$ be a simple group
over a global function field $K$, and let $\pi$ be a cuspidal automorphic representation of $G$.  Suppose 
$K$ has two places $u$ and $v$ (satisfying a mild restriction on the residue field cardinality), at which the group $G$ is quasi-split,
such that  $\pi_u$ is  tempered and $\pi_v$ is unramified and generic.   We prove that $\pi$ is tempered at all unramified places $K_w$ at which $G$ is unramified quasi-split.

The proof uses the Galois parametrization of cuspidal representations due to V. Lafforgue to relate the local Satake parameters of $\pi$ to Deligne's theory of Frobenius weights.  The main observation is that, in view of the classification of generic unitary spherical representations, due to Barbasch and the first-named author, the theory of weights excludes generic complementary series as possible local components of $\pi$.  This in turn determines the local Frobenius weights at all unramified places.  In order to apply this observation in practice we need a result of the second-named author with Gan and Sawin on the weights of discrete series representations.
\end{abstract}

\section{Introduction}   One of the most striking implications of Laurent Lafforgue's proof of the Langlands correspondence for $GL(n)$ over function fields is the Ramanujan Conjecture:  If $K$ is a global function field over a finite field and $\pi$ is a cuspidal automorphic representation of $GL(n,K)$ with unitary central character, then the local components of $\pi$ are tempered everywhere.  This is believed to be the case as well when $K$ is a number field, but Lafforgue's proof relies on Deligne's purity theorem (also known as the Weil Conjectures), whose applications over number fields are limited.

Vincent Lafforgue has used the geometry of varieties (or stacks) over finite fields to construct global Langlands parameters for 
cuspidal automorphic representations of $G(K)$ when $G$ is any connected reductive group.  In particular, Deligne's purity theorem is used at least implicitly in much of the theory on which his construction relies.   It is therefore natural to ask whether his results imply anything like the Ramanujan Conjecture for general $G$.  The first observation is that it has long been known that the most direct translation of the Ramanujan Conjecture fails for groups other than $GL(n)$ and its inner forms;  there are numerous constructions of cuspidal automorphic representations of classical groups that are not locally tempered almost everywhere.  The Arthur Conjectures were designed in part to address this phenomenon and to characterize the discrepancy from temperedness systematically.   See \cite{Sa} for a review of the generalized Ramanujan Conjecture, primarily in the context of number fields.

Shahidi's $L$-packet conjecture provides an important insight into the question.  The conjecture, in its recent refinement \cite[Conjecture 1.5]{LS}, asserts that every Arthur
packet of a quasi-split $G$ is tempered if and only if it contains a locally generic member with respect to any Whittaker datum.  
In view of the Arthur Conjectures, in other words, genericity, in some form, is the natural condition for a version of the Ramanujan 
Conjecture valid for general $G$.   In his article \cite{Sh}, Shahidi showed\footnote{The article \cite{LS} of B. Liu and Shahidi treats more general Arthur packets, in connection with a conjecture due to Jiang.}  that 
\begin{thm*}\cite[Theorem 6.2]{Sh}  Assume $G$ is a quasi-split connected reductive group over the global field $K$.  The Arthur Conjectures, together with a generalization due to Clozel \cite{Cl}, imply that if $\pi$ is 
an everywhere locally generic cuspidal automorphic representation of $G$ with unitary central character, then the local components $\pi_v$ are tempered for all but finitely many $v$.
\end{thm*}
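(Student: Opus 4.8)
The plan is to argue through Arthur's conjectural description of the discrete automorphic spectrum. First I would invoke the Arthur Conjectures to attach to $\pi$ a global Arthur parameter $\psi\colon L_K\times SL_2(\bC)\to{}^LG$, where $L_K$ is the conjectural Langlands group of $K$; since $\pi$ is cuspidal with unitary central character, $\psi|_{L_K}$ has bounded image. Localizing produces, for every place $v$, a local Arthur parameter $\psi_v$, an associated Langlands parameter $\phi_{\psi_v}(w)=\psi_v\!\left(w,\mathrm{diag}\!\left(|w|_v^{1/2},|w|_v^{-1/2}\right)\right)$, and a local Arthur packet $\Pi_{\psi_v}$ that, by Arthur's multiplicity formula, contains $\pi_v$. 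Because $\psi|_{L_K}$ is bounded, $\phi_{\psi_v}$ has Langlands data $(M_v,\sigma_v,\nu)$ with $\sigma_v$ a tempered representation of a Levi $M_v(K_v)$ and with exponent $\nu=\nu_\psi$ depending only on $\psi|_{SL_2(\bC)}$; in particular it is the same real parameter at every place. The whole goal is then to show $\nu_\psi=0$, equivalently that $\psi$ is a tempered parameter, since members of tempered packets are tempered.

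Second I would bring in Clozel's generalization of the Arthur Conjectures, which at almost every unramified place $v$ pins down which member of $\Pi_{\psi_v}$ the component $\pi_v$ is: it must be the distinguished spherical constituent, namely the Langlands quotient $J(\phi_{\psi_v})=J(M_v,\sigma_v,\nu_\psi)$ --- equivalently, the unramified representation whose Satake parameter is $\phi_{\psi_v}(\mathrm{Frob}_v)$. This is where the qualifier ``all but finitely many'' enters the statement: the method controls only the unramified components, so the finitely many ramified places are simply set aside.

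Third --- and this is the crux --- I would use the hypothesis that each $\pi_v$ is generic to force $\nu_\psi=0$. Suppose to the contrary that $\psi|_{SL_2(\bC)}$ is nontrivial, so $\nu_\psi\neq 0$. A nontrivial Arthur $SL_2$ records, at almost every place, a pole of one of the local $L$-factors $L(s,\sigma_v,r_i)$ attached to the adjoint action on the unipotent radical of $P_v$, occurring exactly at the value of $s$ corresponding to $\nu_\psi$; by Shahidi's analysis of the Langlands--Shahidi normalizing factors of the rank-one intertwining operators, such a pole forces the standard module $I(M_v,\sigma_v,\nu_\psi)$ to be reducible at almost every $v$. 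Then $J(M_v,\sigma_v,\nu_\psi)$ is a proper Langlands quotient, and by the standard module conjecture (Casselman--Shahidi, in the generality in which it is available) a proper quotient of a standard module is not generic with respect to any Whittaker datum. Hence $\pi_v$ is non-generic at almost every $v$, contradicting the assumption that $\pi$ is everywhere locally generic. Therefore $\nu_\psi=0$, the parameter $\psi_v$ is tempered, and $\pi_v$, being a member of a tempered $L$-packet, is tempered at every place outside the finite ramified set.

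The step I expect to be the main obstacle is the third one: identifying exactly which member of each local Arthur packet carries the Whittaker model, i.e.\ showing that the non-tempered Langlands quotient sitting at the Arthur exponent $\nu_\psi$ can never be generic. This is precisely the classification-of-generic-representations problem, and it is the place where, in the body of this paper, the Arthur-conjectural input is replaced: the Barbasch--Ciubotaru classification of generic unitary spherical representations, combined with Deligne's theory of Frobenius weights, excludes exactly the generic complementary series that the reducibility argument above rules out only conjecturally.
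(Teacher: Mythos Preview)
The paper does not contain a proof of this theorem. It is stated in the introduction with the attribution \cite[Theorem 6.2]{Sh} as a \emph{conditional} result of Shahidi, serving purely as motivation for the paper's own Theorem~\ref{mainthm}, which establishes an unconditional analogue over function fields by entirely different means. So there is nothing in the present paper to compare your proposal against.

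That said, your sketch is a reasonable outline of Shahidi's actual argument in \cite{Sh}: one assumes the Arthur Conjectures (including Clozel's refinement) to attach a global Arthur parameter, localizes, and then uses the generic hypothesis together with the standard-module conjecture to rule out a nontrivial Arthur $SL_2$. Your closing paragraph correctly identifies the logical relationship to the body of the paper: the authors replace the conjectural Arthur-theoretic input precisely at the step where one must show that a generic unitary spherical representation cannot sit at a nonzero Arthur exponent, substituting instead Lafforgue's parametrization, Deligne's weight theory, and the Barbasch--Ciubotaru classification. One small caveat: in Shahidi's framework the assumed Arthur Conjectures already include, for classical groups, the Ramanujan conjecture for $GL(n)$ as an input (as noted in the footnote to the theorem in the paper), so the argument is not as self-contained as your sketch might suggest.
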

Shahidi states his theorem for number fields but it should be valid for function fields as well.\footnote{Laurent Clozel has pointed out that, in his review of Shahidi's article for Mathematical Reviews, Wee Teck Gan noted that the version of the Arthur Conjectures assumed in Shahidi's Theorem 6.2 includes the Ramanujan Conjecture for $GL(n)$.  For number fields this is quite far from being established, but for function fields this is due  to Laurent Lafforgue \cite[Th\'eor\`eme VI.10]{Laf02}.}

The purpose of this paper is to prove a version of this theorem when $K$ is a function field.   Our theorem is unconditional -- we assume neither the Arthur Conjectures nor
Clozel's generalization -- and we assume only that $\pi_v$ is generic at one place, and that $\pi_u$ is tempered at a second place.   This is consistent with the version of
the generalized Ramanujan Conjecture that predicts that {\it globally generic} cuspidal automorphic representations are everywhere tempered -- in \cite{Sa} Sarnak attributes this
version to the article \cite{HPS} of Howe and Piatetski-Shapiro.    At present we are required to make  a few additional technical assumptions.  Here is our statement.

\begin{thm*}[Theorem \ref{mainthm}, below]\label{t:main-intro}  Let $K$ be a function field over a finite field of characteristic $p$, and let
$G$ be a connected  simple group over $K$.  Let $\pi$ be a cuspidal automorphic representation of $G$.  Suppose 
\begin{enumerate}
\item The local component $\pi_v$ is generic and unramified at a place $v$ of $K$ such that $G(K_v)$ is an unramified quasi-split group
\footnote{Here and elsewhere, we use this expression as shorthand
for the condition that $G$, as an algebraic group over $K_v$, is unramified and quasi-split.}, other than the quasi-split form of a unitary group in $2n+1$ variables;
\item There is a place $u$ of $K$ such that $\pi_u$  is tempered;
\item  If $G$ is of type $B_n$ or $C_n$, we assume $p > 2$.  If $G$ is of type $F_4$ or $G_2$, we assume $p > 3$.  
\end{enumerate}
Then for every place $w$ of $K$ at which $\pi$ is unramified and $G(K_w)$ is unramified quasi-split, the local component $\pi_w$ is tempered.
\end{thm*}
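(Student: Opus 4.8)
The plan is to use Vincent Lafforgue's Galois parametrization to translate temperedness of local components into a statement about Frobenius weights, and then to invoke the classification of generic unitary spherical representations to pin down those weights. First I would attach to the cuspidal representation $\pi$ its global Langlands parameter $\sigma_\pi \colon \pi_1(\Spec K) \to \LGr(\Qlb)$ (or, more precisely, into the relevant $L$-group), so that at every unramified place $w$ the local Satake parameter of $\pi_w$ is the conjugacy class of the Frobenius $\sigma_\pi(\mathrm{Frob}_w)$. The key input from Deligne's theory is that, because $\sigma_\pi$ occurs in the cohomology of a moduli stack of shtukas, its restriction to a decomposition group at any place is pure of weight zero in a suitable sense; concretely, the eigenvalues of $\sigma_\pi(\mathrm{Frob}_w)$ under any finite-dimensional representation of $\hG$ are Weil numbers whose archimedean absolute values are governed by the weight filtration of the associated Weil--Deligne representation. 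The monodromy (nilpotent) part of the local Weil--Deligne representation at $w$ is trivial at the unramified places, but the Frobenius-semisimple part can still have eigenvalues off the unit circle if the associated $SL_2$ (the ``Arthur'' $SL_2$, encoded by the weight filtration of the global parameter) acts nontrivially.

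The heart of the argument is at the distinguished places $u$ and $v$. At $u$, temperedness of $\pi_u$ forces the local parameter to have Frobenius eigenvalues of absolute value $1$ in every representation, which — combined with purity — forces the global weight filtration, equivalently the Arthur $SL_2$, to be as small as it can be consistent with the other constraints; this is where I expect the argument to be subtle, because one must rule out the possibility that the global parameter carries a nontrivial $SL_2$ that happens to be ``invisible'' at $u$ while being visible at $w$. The mechanism for excluding this is the place $v$: there $\pi_v$ is unramified and generic, so its Satake parameter corresponds to a point in the spherical unitary dual that also supports a Whittaker model. By the Barbasch--Ciubotaru classification of generic unitary spherical representations (the ``main observation'' advertised in the abstract), the only such representations are the tempered ones — generic complementary series simply do not occur among unitary spherical representations for the groups in question (here the exclusion of the odd unitary group and the characteristic hypotheses on $B_n$, $C_n$, $F_4$, $G_2$ enter, since the classification and its weight-theoretic consequences require those restrictions, and the Gan--Harris--Sawin result on weights of discrete series is needed to convert the representation-theoretic statement into one about Frobenius weights). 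Hence $\sigma_\pi(\mathrm{Frob}_v)$ is a unitary (tempered) Satake parameter, which via purity pins the local Frobenius weights at $v$ — and therefore the global weight structure — to be trivial.

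Once the global weight filtration (equivalently the Arthur $SL_2$) is trivial, purity applied place by place gives temperedness everywhere: at each unramified quasi-split place $w$, the eigenvalues of $\sigma_\pi(\mathrm{Frob}_w)$ in every representation of $\hG$ are Weil numbers of weight zero, i.e.\ of absolute value $1$ under every archimedean embedding, which is exactly the statement that the Satake parameter of $\pi_w$ is tempered. I would organize the proof as: (i) construct $\sigma_\pi$ and record the purity/weight-monodromy properties of its local restrictions, citing Gan--Harris--Sawin for the discrete-series weight computation; (ii) show that genericity plus unitarity at $v$, via Barbasch--Ciubotaru, forces $\pi_v$ tempered, hence the weight filtration at $v$ trivial; (iii) propagate triviality of the weight filtration globally using that $\sigma_\pi$ is irreducible (or at least semisimple with a single ``weight'') so a local weight condition at one place is a global condition; (iv) conclude at every $w$. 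The main obstacle is step (iii): making rigorous the passage from ``no complementary series at $v$'' to ``trivial Arthur $SL_2$ globally,'' which requires knowing that the weight filtration of the global parameter is determined by its restriction to a single decomposition group — this is where the hypothesis that $\pi_u$ is tempered at a genuinely different place $u$ is presumably used, to rule out parameters that are reducible in a way that could hide a nontrivial weight structure at $w$ but not at $u$ or $v$.
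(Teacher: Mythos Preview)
Your outline has the right ingredients but assembles them in the wrong order, and the central claim about Barbasch--Ciubotaru is false as stated.

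You write that ``generic complementary series simply do not occur among unitary spherical representations for the groups in question,'' so that genericity plus unitarity at $v$ alone forces $\pi_v$ tempered.  This is not what the classification says.  The generic spherical unitary locus $\CU_0^g$ is a union of $2^d$ open alcoves (Theorem~\ref{t:alcoves}), and $d > 0$ for every type except $A_n$ and $B_n$; for $E_8$, for instance, there are sixteen alcoves' worth of non-tempered generic unitary spherical representations.  So your step (ii) fails: knowing only that $\pi_v$ is generic and unitary does \emph{not} give $\nu_v(\sigma)=0$.  What the classification does yield (Corollary~\ref{c:half-integral}) is that if in addition the Satake parameter is \emph{half-integral} with respect to the adjoint representation and one further minuscule representation, then $\nu_v(\sigma)=0$.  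That extra half-integrality hypothesis is the whole point, and it has to come from somewhere.

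It comes from the place $u$, and this is where the logical flow of the paper is the reverse of yours.  The paper does \emph{not} deduce purity from realization in the cohomology of shtuka stacks---that is recorded only as Expectation~\ref{stack} and is not known in the generality needed.  Instead, Deligne's theorem gives a decomposition $\CL(\pi,\tau)=\bigoplus_\chi \CL(\pi,\tau)_\chi\otimes\chi$ into pieces that are mixed with integer weights, twisted by characters $\chi$ of $Gal(\bar k_1/k_1)$ that account for the possible non-integral part.  The role of $u$ is to kill the $\chi$'s: temperedness of $\pi_u$ together with the Gan--Harris--Sawin result that $\tau\circ\CL^{ss}(\pi_u)$ extends to a pure Weil--Deligne representation of weight $0$ forces every $\chi$ to be trivial (up to integer weight).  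Only \emph{then} do the Frobenius eigenvalues at $v$ become genuine Weil $q$-numbers, giving $\langle\alpha,\nu_v(\sigma)\rangle\in\tfrac12\ZZ$ for all weights $\alpha$ of $\tau$.  With half-integrality in hand, the Barbasch--Ciubotaru analysis at $v$ now applies and yields $\nu_v(\sigma)=0$; this pins the global weight to $0$, and temperedness at every unramified $w$ follows.  So the correct order is: $u$ gives half-integrality $\Rightarrow$ half-integrality plus genericity at $v$ gives $\nu_v=0$ $\Rightarrow$ weight $0$ everywhere.  Your framing in terms of an ``Arthur $SL_2$'' and weight filtrations is not how the paper argues and obscures where the actual work is done.
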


\begin{rmk*}  Strictly speaking, $\pi$ is defined over a number field and the appropriate notion, both in the hypothesis and in the conclusion, is $\iota$-tempered for some embedding $\iota:  \bar{\QQ} \hookrightarrow \bC$.
For a discussion of the relevant notions of temperedness see Section \ref{iotatempered} and Theorem \ref{mainthm}.  
\end{rmk*}

The proof comes down to assembling three basic structural features of three distinct aspects of the theory of automorphic representations, together with a recent result of the second-named author with Gan and Sawin that serves to hold the disparate parts of the argument together.  The first element of the assembly is V. Lafforgue's assignment to $\pi$ of a compatible family of semisimple $\ell$-adic Langlands parameters 
$$\sigma = \sigma_{\ell,\pi}:   Gal(K^{sep}/K) \ra \LGr(\Qlb),$$
where $\LGr$ is the Langlands dual group.  The properties of $\sigma$ are recalled in \S \ref{sec_param}; the important point is that the restriction of $\sigma$ to the decomposition group at an unramified place $w$ belongs to the conjugacy class of the Satake parameter of $\pi_w$.

The second element is Deligne's theory of weights of $\ell$-adic representations of the Galois group of a function field over a finite field.  Together with L. Lafforgue's proof of the global Langlands correspondence for $GL(n,K)$, Deligne's theory implies that the composition of $\sigma$ with any $N$-dimensional linear representation  
$$\tau:  \LGr \ra GL(N,\Qlb),$$
a semisimple rank $N$ $\ell$-adic local system, decomposes uniquely as the direct sum of punctually pure local systems of  (integer) weights $w_\chi$, each shifted by a character $\chi$ of the Galois group of the constant field that can be thought of as the non-integral part of the summand.  A more precise statement is given in \eqref{chidecomp} below.   

If we knew that $\tau \circ \sigma$ were irreducible for some $\tau$,  it would be attached by the Langlands correspondence for $GL(N)$ to a cuspidal automorphic representation of $GL(N)$, and Laurent Lafforgue's results, combined with the semisimplicity of $G$, would imply that no non-trivial $\chi$ appears in the direct sum decomposition.  The genericity hypothesis (1) in the statement of Theorem \ref{mainthm} should imply that some $\tau\circ\sigma$ is irreducible, but this is not known in general.  Instead,
hypothesis (2) in the statement of Theorem \ref{mainthm} provides the anchor that guarantees that only integer weights occur in 
$\tau\circ\sigma$.   With this in hand, we can apply the classification of generic spherical unitary representations of the split group $G(K_v)$.   Since every irreducible spherical representation factors through the adjoint group, it is sufficient to consider the classification of the spherical unitary dual when $G(K_v)$ is adjoint. This is due to Barbasch and the first-named author and its application to the problem at hand takes up most of the length of the article.  By analyzing $\tau\circ \sigma$ when $\tau$ is the adjoint representation, and one additional minuscule representation where needed, a case-by-case consideration of the classification shows that no generic complementary series representation is compatible with the requirement that the weights be integral.  This in turn implies that $\tau\circ\sigma$ is punctually pure of weight $0$, which implies Theorem \ref{mainthm}.   

The restriction on $p$ is an artifact of the proof of the theorem in \cite{GHS} and can undoubtedly be eliminated.  
 If $G(K_v)$ is the quasi-split form of a unitary group in $2n+1$ variables, the analysis of the generic spherical unitary locus  can only conclude that if the local component $\pi_v$ in Main Theorem is not  tempered, then the real part of its Satake parameter is precisely one explicitly determined parameter, $\frac 12\omega_n$, see Remark \ref{PSU}.

As a corollary of the Main Theorem, we obtain that every unramified local component $\pi_w$ is generic, provided $G(K_w)$ is {unramified quasi-}split   subject to the same restrictions as above.  
Readers are referred to Shahidi's article \cite{Sh} for indications of the compatibility of our results with the Arthur Conjectures.  

\section*{Acknowledgments}  The authors thank Laurent Clozel, Johan de Jong, H\'el\`ene Esnault, Dennis Gaitsgory, Mark Kisin, Colette Moeglin, Peter Sarnak, Freydoon Shahidi, and Jack Thorne for  helpful clarifications.  We also thank Martin Olsson for explaining the current state of the literature on weights in the $\ell$-adic cohomology of stacks, and for providing the sketch of the proof of the purity result used to provide an alternative approach to Theorem \ref{mainthm}.

\section{Parameters of cuspidal representations and Weil numbers}\label{sec_param}

In what follows $k_1$ is a finite field of order $q_1 = p^f$ for some prime $p$, $C$ is an algebraic curve over $k_1$ with function field
$K = k_1(C)$, $v$ is a chosen place of $K$ with residue field $k$ of order $q$, and $G$ is a semisimple group over $K$ that is {unramified quasi-}split at $v$.  {For
reasons to be explained in Remark \ref{PSU}, we assume that $G(K_v)$ is not isomorphic to the quasi-split but non-split group $PSU(2n+1)$. }

We will fix
an open compact subgroup $U \subset G(\ad_K)$ that contains a hyperspecial maximal compact subgroup $U_v \subset G(K_v)$.   Let $X_U$ denote the double coset space $G(K)\backslash G(\ad_K)/U$.
The space of automorphic forms  of level $U$ with coefficients in a (commutative) ring $E$ is denoted
$$\CA(G,U,E) := \{f:  X_U \ra E \},$$
where all $f$ are assumed continuous with respect to the discrete topology on $E$.  With this definition it is clear that if $E$ is a subring of the algebraically
closed field $\mathcal{C}$
then the inclusion of $E$ in $\mathcal{C}$ defines an isomorphism
$$\CA(G,U,E)\otimes \mathcal{C} \isoarrow \CA(G,U,\mathcal{C}).$$
 
Assume $p$ is invertible in $E$ and let $\CA_0(G,U,E) \subset \CA(G,U,E)$ denote the submodule of cusp forms.  This is defined by the vanishing of
constant terms, which are defined by integrals with respect to a measure with values in $\ZZ[\frac{1}{p}]$.   We let 
$$\CA_0(G,E) = \varinjlim_{U} \CA_0(G,U,E),$$
the colimit taken with respect to inclusion.  
 
 \begin{prop}\label{cuspfinite}  Suppose $E$ is a noetherian $\ZZ[\frac{1}{p}]$ algebra with a fixed embedding in the algebraically
 closed field $\mathcal{C}$.
 Then $\CA_0(G,U,E)$ is a finite $E$-module and the functions in $\CA_0(G,U,E)$ are compactly supported in $X_U$.
 Moreover, $\CA_0(G,U,E)\otimes \mathcal{C} \isoarrow \CA_0(G,U,\mathcal{C}).$
  \end{prop}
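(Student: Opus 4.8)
The plan is to reduce everything to one reduction-theoretic input and then argue formally. The key claim is:
\emph{there is a finite subset $S\subseteq X_U$, depending only on $G$, $U$ and $C$, such that every cuspidal $f$ with coefficients in an arbitrary $\ZZ[\tfrac1p]$-domain is supported on $S$.}
Since cuspidality --- the vanishing of certain explicit $\ZZ[\tfrac1p]$-linear combinations of the values of $f$ --- is unchanged when the coefficient domain is enlarged to its fraction field, it suffices to prove this for coefficients in a field $F$ with $p\in F^{\times}$, and there it is the classical reduction theory over the function field $K$ (Harder). One would Fourier-expand $f$ (working up a filtration of the unipotent radical $N_P$ of a proper parabolic $P$ by abelian subquotients) inside a Siegel domain: cuspidality annihilates the constant terms, while right $U$-invariance makes each nonconstant Fourier mode a character trivial on a compact open subgroup that grows without bound as one moves deeper into the dominant cone, so --- $\ad_K$ having no nontrivial character trivial on arbitrarily large compact open subgroups --- the mode vanishes far out. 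Hence $f$ is supported on a fixed compact subset of $G(K)\backslash G(\ad_K)$, the bounds depending only on a canonical divisor of $C$ and on $U$; as $X_U$ is discrete and $G$ is semisimple (its finite center contributing no noncompact direction), that subset meets $X_U$ in a finite set $S$. The only ingredients being the self-duality of $\ad_K$ and the invariance properties of $f$, the argument is insensitive to $\operatorname{char}F$, which --- as $p\in F^{\times}$ --- is $0$ or a prime $\neq p$; equivalently, in bundle language $f$ is supported on a fixed Harder--Narasimhan truncation $\mathcal V\subset\operatorname{Bun}_{G,U}$, a quasi-compact open defined over $\bF_{q_1}$ independently of the coefficients (cf. also Drinfeld--Gaitsgory and V. Lafforgue).

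Granting the claim, let $M\subseteq\CA(G,U,E)$ be the submodule of functions supported on $S$; restriction to $S$ identifies it with $E^{|S|}$, a finite free $E$-module. Since $E$, a subring of the field $\mathcal C$, is a noetherian domain, its submodule $\CA_0(G,U,E)\subseteq M$ is a finite $E$-module whose elements are finitely --- hence compactly --- supported, which gives the first two assertions. For the base change, $\mathcal C$ is flat over $E$: it is a field extension of $\operatorname{Frac}(E)$, hence flat over it, and $\operatorname{Frac}(E)$ is a localization of $E$. Moreover, on $M$ the cuspidality condition is cut out by \emph{finitely many} $\ZZ[\tfrac1p]$-linear functionals: the constant term along a proper parabolic $P$ of a function supported on $S$ is itself left $M_P(K)N_P(\ad_K)$-invariant, right $U$-invariant, and supported on at most $|S|$ double cosets, hence determined by finitely many $\ZZ[\tfrac1p]$-linear combinations of the values of $f$, and only finitely many $G(K)$-conjugacy classes of proper parabolics need be tested. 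These functionals are the same over any coefficient ring and match under the isomorphism $\CA(G,U,E)\otimes_E\mathcal C\isoarrow\CA(G,U,\mathcal C)$; writing $\CA_0(G,U,E)=\ker\bigl(M\to E^{N}\bigr)$ for the resulting $E$-linear map, flatness of $\mathcal C$ over $E$ yields $\CA_0(G,U,E)\otimes_E\mathcal C=\ker\bigl(M\otimes_E\mathcal C\to\mathcal C^{N}\bigr)$, and the claim applied over $\mathcal C$ identifies this last kernel with $\CA_0(G,U,\mathcal C)$.

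I expect the only real difficulty to be the uniform support claim; everything after it is commutative algebra, using only that $E$ is a noetherian domain and that, once a function is known to be supported on the fixed finite $S$, cuspidality becomes a finite system of $\ZZ[\tfrac1p]$-linear equations compatible with base change. Within the claim itself the delicate part is not the compactness of the support of a cusp form, which is classical, but the uniformity of $S$ in the coefficient field --- this is what lets the argument run in positive residue characteristic and is ultimately forced by the hypothesis $p\in E^{\times}$ built into the definition of $\CA_0$.
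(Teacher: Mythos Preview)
Your argument is correct. The paper's own proof consists of a single citation---``All but the final statement is contained in \cite[Proposition 8.2]{BHKT}; the last statement is obvious''---so there is no independent argument to compare against; what you have written is essentially a reconstruction of the content behind that reference. The uniform-support claim you isolate (a fixed finite $S\subset X_U$ carrying all cusp forms, independent of the coefficient field of characteristic $\neq p$) is exactly the substance of the cited result, and your sketch via Harder's reduction theory and Fourier expansion along unipotent radicals is the standard route to it. The deduction of finite generation and compact support from this claim is immediate, as you note. For the base-change isomorphism your flatness argument is cleaner than strictly necessary---once both sides are identified with the kernel of the \emph{same} $\ZZ[\tfrac1p]$-linear map $E^{|S|}\to E^N$ (respectively $\mathcal C^{|S|}\to\mathcal C^N$), the statement is linear algebra over a domain and its overfield---but your version is certainly correct and makes the role of the noetherian-domain hypothesis transparent.
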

  
  \begin{proof}  All but the final statement is contained in \cite[Proposition 8.2]{BHKT}; the last statement is obvious.
  \end{proof}
  
  In the applications, $E$ will be a number field and $\mathcal{C}$ will be either $\bC$ or $\Qlb$.  We fix an algebraic closure
  $\bar{\QQ}$ of $\QQ$ containing $E$ and choose embeddings
  $$\iota:  \bar{\QQ} \hookrightarrow \bC; ~~~ \iota_\ell:  \bar{\QQ} \hookrightarrow \Qlb.$$

\subsection{Rationality of cuspidal representations}  Let $S = S(U)$ be the finite set of places $w$ of $K$ such that $U_w := U \cap G(K_w)$ is not a hyperspecial
maximal compact subgroup; thus $v \notin S$.   We define the Hecke algebra 
$$\CH^S(G,U) = \otimes'_{w \notin S} \CH_w$$
where $\CH_w = \CH(G(K_w),U_w)$ is the algebra of $\QQ$-valued compactly supported functions on $G(K_w)$, biinvariant under $U_w$, with multiplication given
by convolution; the restricted tensor product $\otimes'$ is taken with respect to the unit elements $\mathbf{1}_{U_w} \in \CH_w$.   This is a commutative algebra and for any
$E$ as in Proposition \ref{cuspfinite} the action of $\CH^S(G,U)\otimes E$ on $\CA_0(G,U,E)$ is semisimple and decomposes $\CA_0(G,U,E)$ as a finite sum of
eigenspaces.  It follows in particular that 

\begin{lemma}\label{algHecke}  The eigenvalues of $\CH^S(G,U)$ on $\CA_0(G,U,\mathcal{C})$ are algebraic numbers for any algebraically closed $\mathcal{C}$.
\end{lemma}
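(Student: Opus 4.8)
The plan is to exhibit a $\QQ$-rational structure on the space of cusp forms that is stable under the Hecke algebra, and then read off the eigenvalues from a characteristic polynomial with rational coefficients. First I would apply Proposition \ref{cuspfinite} with $E = \QQ$ (a noetherian $\ZZ[\frac1p]$-algebra, which embeds in $\mathcal{C}$ as soon as $\mathcal{C}$ has characteristic zero, in particular for $\mathcal{C} = \bC$ or $\Qlb$): this gives that $V := \CA_0(G,U,\QQ)$ is a finite-dimensional $\QQ$-vector space, together with the base-change isomorphism $V\otimes_\QQ\mathcal{C} \isoarrow \CA_0(G,U,\mathcal{C})$.

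Next I would verify that the action of $\CH^S(G,U)$ preserves this rational structure. The algebra $\CH^S(G,U)$ is by definition a commutative $\QQ$-algebra, and its elements act on $\CA(G,U,\QQ) = \{f:X_U\ra\QQ\}$ by convolution with $\QQ$-valued functions, hence preserve $\QQ$-valued forms; moreover the constant-term maps cutting out the cuspidal subspace are defined by integration against measures valued in $\ZZ[\frac1p]\subseteq\QQ$, as recalled above, so $V = \CA_0(G,U,\QQ)$ is a $\CH^S(G,U)$-stable subspace. Thus each $h\in\CH^S(G,U)$ induces a $\QQ$-linear endomorphism of the finite-dimensional $\QQ$-vector space $V$, with characteristic polynomial $P_h(T)\in\QQ[T]$.

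The conclusion is then immediate: under the base-change isomorphism the operator $h$ on $\CA_0(G,U,\mathcal{C})$ corresponds to $h\otimes 1$ on $V\otimes_\QQ\mathcal{C}$, so it has the same characteristic polynomial $P_h(T)$, now regarded in $\mathcal{C}[T]$. Every eigenvalue of $h$ on $\CA_0(G,U,\mathcal{C})$ is therefore a root of a rational polynomial, i.e. an algebraic number; equivalently, since $\CH^S(G,U)$ is commutative and acts semisimply, it acts through a finite-dimensional commutative $\QQ$-algebra quotient $A$, and the Hecke eigensystems are the $\mathcal{C}$-points of $\Spec A$, all of which factor through $\bar\QQ$. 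There is no real obstacle in this argument: essentially all the content is the finiteness and base-change statement of Proposition \ref{cuspfinite} (ultimately \cite[Proposition 8.2]{BHKT}). The only point that repays a moment's attention is the Hecke-stability of the $\QQ$-structure in the second step, which is exactly where the integrality of the constant-term measures (values in $\ZZ[\frac1p]$ rather than merely in $\mathcal{C}$) enters.
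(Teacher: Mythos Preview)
Your proposal is correct and is precisely the argument the paper has in mind: the lemma is stated immediately after the sentence ``the action of $\CH^S(G,U)\otimes E$ on $\CA_0(G,U,E)$ is semisimple and decomposes $\CA_0(G,U,E)$ as a finite sum of eigenspaces'' with the phrase ``It follows in particular that,'' and your write-up simply unpacks this using Proposition~\ref{cuspfinite} with $E=\QQ$ and the base-change isomorphism. Your remark that this requires $\mathcal{C}$ to have characteristic zero is apt; in the paper's context $\mathcal{C}$ is $\bC$ or $\Qlb$, so this is harmless.
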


Similarly, the action of $G(\ad_K)$ on cusp forms preserves the rational subspace $\CA_0(G,\QQ)$.  So any  irreducible cuspidal automorphic representation has a model
over a number field.  It follows that 
$$\CA_0(G,\bar{\QQ}) \isoarrow \bigoplus_{\pi}  m(\pi) \pi;   \CA_0(G,U,\bar{\QQ}) \isoarrow \bigoplus_{\pi} m_U(\pi) \pi^U$$
where $\pi$ runs over the irreducible $\bar{\QQ}$-representations of $G(\ad_K)$ and $m(\pi) \geq 0$ is an integer multiplicity, and we make the convention
that $m_U(\pi) = 0$ if $\pi^U = 0$.  

Fix a prime $\ell \neq p$ and a level subgroup $U$.  Let $\CH^S(G,U),\Qlb) = \CH^S(G,U)\otimes \Qlb$.   In \cite{Laf18}
Vincent Lafforgue defines a commutative algebra $\CB(G,U,\Qlb) \supset \CH^S(G,U,\Qlb)$ of {\it excursion operators} that acts on $\CA_0(G,U,\Qlb)$ and defines a
decomposition
\begin{equation}\label{param}
\CA_0(G,U,\Qlb) \isoarrow \bigoplus_{\sigma} \CA_0(G,U,\Qlb)_{\lambda(\sigma)},
\end{equation}
where $\lambda(\sigma)$ runs over characters of $\CB(G,U,\Qlb)$ that occur non-trivially in $\CA_0(G,U,\Qlb)$, and $\sigma$ designates a semisimple Langlands parameter
\begin{equation}\label{sigma}
\sigma = \sigma_\ell:  Gal(K^{sep}/K) \ra \LGr(\Qlb).
\end{equation}
It is not known in general whether or not this decomposition is defined over $\bar{\QQ}$, but it is known for $GL(n)$ (when an appropriate central character is fixed).  However,
\begin{lemma}\label{excHecke}  The restriction of the action of $\CB(G,U,\Qlb)$ on $\CA_0(G,U,\Qlb)$ to the subalgebra $\CH^S(G,U,\Qlb)$ coincides with the $\Qlb$-linear
extension of the natural action of $\CH^S(G,U,\bar{\QQ})$.
\end{lemma}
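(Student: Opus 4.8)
The plan is to read this lemma off V.~Lafforgue's construction: the excursion algebra $\CB(G,U,\Qlb)$ is built so that the unramified Hecke operators occur among the excursion operators, and the asserted equality of actions is precisely the \emph{Hecke--Frobenius} compatibility established in \cite{Laf18} (see also \cite{BHKT}). Concretely, for each place $w\notin S$ and each finite-dimensional representation $V$ of the local $L$-group of $G$ at $w$ (which, at an unramified quasi-split place, is the semidirect product $\hG\rtimes\langle\mathrm{Frob}_w\rangle$), let $\chi_V$ denote the associated $\hG$-conjugation-invariant function on the coset $\hG\cdot\mathrm{Frob}_w$. Lafforgue shows that the excursion operator attached to the one-element index set $I=\{1\}$, the function $\chi_V$, and the element $\mathrm{Frob}_w$ of the decomposition group at $w$ is a well-defined element of $\CB(G,U,\Qlb)$; these operators and their products span a commutative subalgebra which, via the (unnormalized) Satake isomorphism for the unramified group $G(K_w)$, is identified with $\CH_w\otimes\Qlb$. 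Taking the restricted tensor product over $w\notin S$ yields the inclusion $\CH^S(G,U,\Qlb)\hookrightarrow\CB(G,U,\Qlb)$ that the statement refers to.

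Next I would reduce to generators. Both maps in question --- the restriction to $\CH^S(G,U,\Qlb)$ of the $\CB(G,U,\Qlb)$-action, and the scalar extension to $\Qlb$ of the natural convolution action of $\CH^S(G,U,\bar\QQ)$ on $\CA_0(G,U,\bar\QQ)$, transported along the isomorphism $\CA_0(G,U,\bar\QQ)\otimes_{\bar\QQ}\Qlb\isoarrow\CA_0(G,U,\Qlb)$ coming from Proposition \ref{cuspfinite} --- are $\Qlb$-linear actions of $\CH^S(G,U,\Qlb)$, and $\CH^S(G,U,\bar\QQ)$ is generated as a $\bar\QQ$-algebra by the subalgebras $\CH_w$ for $w\notin S$. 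Hence it suffices to show that for a single $w\notin S$ and a single $h\in\CH_w$ the excursion operator attached to $h$ and classical convolution by $h$ induce the same endomorphism of $\CA_0(G,U,\Qlb)$. That is exactly the Hecke--Frobenius compatibility of \cite{Laf18}: the excursion operator for $(\{1\},\chi_V,\mathrm{Frob}_w)$ acts on the $\ell$-adic cohomology of the moduli stack of $G$-shtukas, hence on $\CA_0(G,U,\Qlb)$, through the composite of a creation morphism, the partial Frobenius at $w$, and an annihilation morphism, and this composite is computed there to be the geometric Satake realization of the classical unramified Hecke operator $T_{V,w}$.

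I expect the only genuine work to be the matching of normalizations: one must check that the conventions of \cite{Laf18} --- geometric versus arithmetic Frobenius, the unnormalized Satake transform, and, at places where $G$ is quasi-split but not split, the use of the $L$-group in place of $\hG$ --- line up with the conventions under which the classical action of $\CH^S(G,U,\bar\QQ)$ on $\CA_0(G,U,\bar\QQ)$ is defined, so that the two actions coincide on the nose rather than merely up to a harmless twist. There is no new mathematical content in this lemma; it simply records, for later use, that Lafforgue's parametrization refines the classical spherical Hecke action rather than being merely related to it.
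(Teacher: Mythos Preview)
Your proposal is correct, and indeed the paper itself offers no proof of this lemma: it is stated as a fact, implicitly citing Lafforgue's construction, and the text moves on immediately to the definition of $\Phi(U)$. Your explanation that the statement is precisely the Hecke--Frobenius (or $S=T$) compatibility of \cite{Laf18}, together with the reduction to a single place $w\notin S$ and the identification of the excursion operator $(\{1\},\chi_V,\mathrm{Frob}_w)$ with the classical Hecke operator $T_{V,w}$ via geometric Satake, is exactly the argument one would supply if asked to expand the paper's bare assertion; you are not taking a different route so much as writing out the route the paper leaves implicit.
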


Let $\Phi(U)$ denote the set of $\sigma$ such that $\CA_0(G,U,\Qlb)_{\lambda(\sigma)} \neq 0$.    For $\sigma \in \Phi(U)$, we let $\lambda_\CH(\sigma)$ denote
the restriction of $\lambda(\sigma)$ to $\CH^S(G,U,\bar{\QQ})$; for $w \notin S(U)$ we let  $\lambda_w(\sigma)$ denote the restriction of $\lambda(\sigma)$ to $\CH_w$.  
We let $\LGr^{ss}(\Qlb)$ denote the set of semisimple elements of $\LGr(\Qlb)$, $[\LGr^{ss}(\Qlb)]$ the set of semisimple conjugacy classes in $\LGr(\Qlb)$.
For $\sigma \in \Phi(U)$ and $w \notin S(U)$, let 
$$\alpha_w(\sigma) = [\sigma(Frob_w)] \in [\LGr^{ss}(\Qlb)],$$
where the brackets around $\sigma(Frob_w)$ denote the conjugacy class of the image under $\sigma$ of a choice of Frobenius element at $w$.

The key fact about Lafforgue's parametrization that we need is
\begin{thm}[Lafforgue]\label{SeqT}  Fix $\sigma \in \Phi(U)$.  Let
$s_{w,\ell}(\sigma) \in [\LGr^{ss}(\Qlb)]$ be the Satake parameter corresponding to $\lambda_w(\sigma)$.  Then 
$$s_{w,\ell}(\sigma) = \iota_\ell(s_w(\sigma))$$
for an $s_w(\sigma) \in [\LGr^{ss}(\bar{\QQ}]$
and
$$\alpha_w(\sigma) = s_w(\sigma).$$
\end{thm}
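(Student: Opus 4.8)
The plan is to deduce this from V.\ Lafforgue's construction of the excursion algebra $\CB(G,U,\Qlb)$ in \cite{Laf18}, together with the rationality statements already recorded. Recall that, after base change to an algebraically closed field $\mathcal{C}$ of characteristic $\neq p$ (adjoining a square root of $q$ if the chosen normalization of the Satake isomorphism requires it), $\CH_w\otimes\mathcal{C}$ is identified by the Satake isomorphism with the ring of $\LGr$-conjugation-invariant regular functions on the relevant component of $\LGr$, so that a character of $\CH_w\otimes\mathcal{C}$ is exactly a point of $[\LGr^{ss}(\mathcal{C})]$. Moreover the characters $\chi_V$ of finite-dimensional representations $V$ of $\LGr$ span those invariant functions, so two elements of $[\LGr^{ss}(\mathcal{C})]$ coincide as soon as all $\chi_V$ take the same value on them.

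First I would prove the rationality assertion $s_{w,\ell}(\sigma)=\iota_\ell(s_w(\sigma))$. By Proposition~\ref{cuspfinite} the joint eigenspace decomposition of $\CA_0(G,U,\Qlb)$ under $\CH^S(G,U,\Qlb)$ is the extension of scalars along $\iota_\ell$ of the corresponding decomposition of $\CA_0(G,U,\bar{\QQ})$, and the eigenvalues of the latter lie in $\bar{\QQ}$ (Lemma~\ref{algHecke}, or simply because $\CA_0(G,U,\bar{\QQ})$ is a finite $\bar{\QQ}$-module on which $\CH^S(G,U)$ acts semisimply). By Lemma~\ref{excHecke} the restriction of $\lambda(\sigma)$ to $\CH^S(G,U,\Qlb)$ is the $\Qlb$-linear extension of one such $\bar{\QQ}$-valued system; hence $\lambda_w(\sigma)$ equals $\iota_\ell$ composed with a $\bar{\QQ}$-valued character of $\CH_w$, which, transported through the Satake isomorphism over $\bar{\QQ}$, corresponds to a class $s_w(\sigma)\in[\LGr^{ss}(\bar{\QQ})]$ with $\iota_\ell(s_w(\sigma))=s_{w,\ell}(\sigma)$.

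The main point is the identity $\alpha_w(\sigma)=s_{w,\ell}(\sigma)$, and this is where Lafforgue's machinery is used. For the singleton $I=\{1\}$, the element $Frob_w\in Gal(K^{sep}/K)$, and the invariant function $\chi_V$ on $\LGr$, the associated excursion operator $S_{\{1\},\chi_V,Frob_w}$ on $\CA_0(G,U,\Qlb)$ coincides with the Satake--Hecke operator attached to $V$ at $w$ inside $\CH_w\otimes\Qlb$; this unramified compatibility is established in \cite{Laf18} and rests on the geometric Satake equivalence, via the identification of the appropriate stalk on the moduli stack of shtukas with the Hecke action. Evaluated on the eigenspace $\CA_0(G,U,\Qlb)_{\lambda(\sigma)}$, this operator acts on one side by $\lambda(\sigma)(S_{\{1\},\chi_V,Frob_w})=\chi_V(\sigma(Frob_w))$, by construction of $\sigma$, and on the other side by $\lambda_w(\sigma)$ applied to the Hecke operator, i.e.\ by $\chi_V(s_{w,\ell}(\sigma))$ under Satake. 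As $V$ ranges over all representations of $\LGr$, the classes $\alpha_w(\sigma)=[\sigma(Frob_w)]$ and $s_{w,\ell}(\sigma)$ in $[\LGr^{ss}(\Qlb)]$ therefore agree, and together with the previous paragraph this gives $\alpha_w(\sigma)=\iota_\ell(s_w(\sigma))$.

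The substantive content is thus entirely imported from \cite{Laf18}: the construction and commutativity of the excursion algebra, the spectral decomposition \eqref{param}, and --- the one genuine obstacle --- the explicit computation at unramified places identifying the singleton/Frobenius excursion operators with the Satake--Hecke operators, which is the step where geometric Satake enters. Granting those inputs, the argument above is bookkeeping; the only point that needs care is to fix all normalizations (arithmetic versus geometric Frobenius, and the normalization of the Satake isomorphism) consistently, so that the Frobenius conjugacy class is literally equal to the Satake parameter rather than differing by a power of $q$ or by an outer automorphism.
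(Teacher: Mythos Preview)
The paper does not give its own proof of this theorem: it is stated with the attribution ``Lafforgue'' and quoted as the key input from \cite{Laf18}. There is therefore nothing in the paper to compare your argument against beyond the citation itself.

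That said, your sketch is the correct unpacking of what is being cited. You have isolated the two ingredients accurately: the rationality of the Satake parameter follows from Lemma~\ref{algHecke}, Lemma~\ref{excHecke}, and Proposition~\ref{cuspfinite} exactly as you say, and the identity $\alpha_w(\sigma)=s_{w,\ell}(\sigma)$ is precisely Lafforgue's unramified local--global compatibility, i.e.\ the identification of the excursion operator $S_{\{1\},\chi_V,Frob_w}$ with the Satake--Hecke operator attached to $V$ at $w$, proved in \cite{Laf18} via geometric Satake. Your separation of semisimple conjugacy classes by the $\chi_V$ is the standard Chevalley-type fact that makes the conclusion go through. The caution about normalizations (arithmetic vs.\ geometric Frobenius, and the $q^{1/2}$ in Satake) is well placed; once those are fixed consistently the argument is indeed bookkeeping on top of \cite{Laf18}.
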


On the other hand, $\lambda_v(\sigma)$, with our fixed unramified place $v$, is the action of $\CH_v$ on $\pi^U$ for some $\pi \subset \CA_0(G,\bar{\QQ})$ with $m_U(\pi) \neq 0$.
Since $\pi$ is cuspidal, it is unitary, and therefore $\iota(s_v(\sigma)) \in [\LGr^{ss}(\bC)]$ is the Satake parameter of a unitary spherical 
representation $\pi_v$ of $G(K_v)$.    Let $T^\vee$ be a maximal torus of the Langlands dual group $\hG$ of $G$.  Since $G(K_v)$ is a split group we may take its Satake parameter $s_v(\sigma)$ to be an element of
$[T^{\vee}(\bar{\QQ})]$, the set of algebraic semisimple conjugacy classes in $\hG(\bar{\QQ})$; the brackets denote conjugacy classes under the Weyl group of $\hG$.
It follows from Theorem \ref{SeqT} that we thus have an algebraic semisimple conjugacy class $s_v(\sigma)$ in $T^\vee$ whose image under a complex embedding corresponds to a unitary spherical representation, while its image under an $\ell$-adic embedding corresponds to the conjugacy class of the Frobenius at $v$ of
a semisimple Galois parameter with values in $\hG(\Qlb)$.  The interaction between these properties of the algebraic conjugacy class 
$s_v(\sigma)$ is the subject of our main theorems.

\begin{defn}\label{belongs}  Let $\pi \subset \CA_0(G,\bar{\QQ})$ with $m_U(\pi) \neq 0$.  We say $\pi$ {\bf belongs to} the parameter $\sigma$ if the restriction of
$\lambda_\CH(\sigma)$ coincides with the action of $\CH^S(G,U,\bar{\QQ})$ on $\pi^U$.  
\end{defn}

{\subsection{Tempered and $\iota$-tempered representations}\label{iotatempered}

Let $F = k((t))$; in other words $F = K_v$ in the previous notation, and let $\pi$ be an irreducible admissible representation of $G(F)$ with coefficients in $\bC$.  Then $\pi$ is tempered
if and only if there is a parabolic subgroup $P \subset G(F)$ with Levi component $M$ and a discrete series representation $\rho$ of $M$, with unitary central character,
such that $\pi$ is an irreducible constituent of the (completely reducible) parabolically unitarily induced representation $\Ind_P^{G(F)} \rho$.  

\begin{defn}\label{tempitemp} Let $\pi$ be an irreducible admissible representation of $G(F)$ with coefficients in $\bar{\QQ}$.  Then we define $\pi$ to be {\it $\iota$-tempered} if $\pi \otimes_{\bar{\QQ},\iota} \bC$ is tempered  in the above sense.  We say $\pi$
is {\it tempered} if it is $\iota'$-tempered for every embedding $\iota':   \bar{\QQ} \hookrightarrow \bC$.
\end{defn}

Suppose $\pi$ as above is $\iota$-tempered, and let $Z_M$ be the center of $M$.  Then the central character 
$$\xi_\rho:  Z_M \ra \bar{\QQ}^\times,$$
has the property that $\iota\circ \xi_\rho:  Z_M \ra \bC^\times$ is unitary.  There is no reason for this to remain true if $\iota$ is replaced by a second embedding
$\iota':  \bar{\QQ} \hookrightarrow \bC$.  The following should be a consequence of the Arthur conjectures, but we have not been able to convince ourselves
that we can prove the conjecture:

\begin{conj}\label{iotat}  Let $\Pi$ be a cuspidal automorphic representation of $G(\ad_K)$ with coefficients in $\bar{\QQ}$.   Suppose $\Pi_v$ is $\iota$-tempered.
Then $\Pi_v$ is tempered.
\end{conj}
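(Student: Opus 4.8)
The plan is to run Conjecture \ref{iotat} through V.\ Lafforgue's Galois parameter and reduce $\iota$-temperedness of $\Pi_v$ to an \emph{embedding-independent} purity statement, from which temperedness for every $\iota'$ follows automatically. Attach to the cuspidal $\Pi$ its parameter $\sigma=\sigma_\ell:  Gal(K^{sep}/K)\ra\LGr(\Qlb)$ as in \eqref{sigma}, fix a faithful representation $\tau:  \LGr\hookrightarrow GL(N,\Qlb)$, and consider the semisimple $\ell$-adic representation $\tau\circ\sigma$. As recalled in the introduction---Deligne's theory of weights together with L.\ Lafforgue's correspondence for $GL(N,K)$ \cite{Laf02} applied to the cuspidal constituents---one has $\tau\circ\sigma\cong\bigoplus_\chi\chi\otimes\CL_\chi$, with $\chi$ a character of the Galois group of the constant field and each $\CL_\chi$ punctually pure of an \emph{integer} weight $w_\chi$. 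In this language the desired conclusion ``$\Pi_v$ is $\iota'$-tempered for every $\iota'$'' is the assertion that $\tau\circ\sigma$ is pure of weight $0$, i.e.\ that every $w_\chi=0$ and no nontrivial $\chi$ occurs: at $v$ this purity translates, through local--global compatibility, into boundedness of the local Weil--Deligne parameter $\phi_v$ of $\Pi_v$; a bounded $L$-parameter corresponds---part of the conjectural local Langlands dictionary for $G(K_v)$---to a tempered representation; and purity of weight $0$ is a property of the $\ell$-adic representation $\tau\circ\sigma$ alone, hence independent of the complex embedding.

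It therefore suffices to extract purity of weight $0$ from the \emph{single} hypothesis that $\Pi_v\otimes_{\bar{\QQ},\iota}\bC$ is tempered, and I would do this in two stages. First, under $\iota$, temperedness of $\Pi_v$ forces the Frobenius eigenvalues of $\tau\circ\sigma$ at $v$---more precisely those of the Frobenius-semisimplification of $\phi_v|_{W_{K_v}}$, with the monodromy $SL_2$ removed---to have $\iota$-absolute value $1$; comparing with $\tau\circ\sigma\cong\bigoplus_\chi\chi\otimes\CL_\chi$ and using that the $w_\chi$ are integers that cannot cancel, this yields $w_\chi=0$ for every $\chi$ occurring in $\tau\circ\sigma$ and $|\iota\chi(\mathrm{Frob}_v)|=1$. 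Second---and here global input is needed---one must promote this to the vanishing of every nontrivial $\chi$, i.e.\ to full purity of weight $0$ of $\tau\circ\sigma$. For $\Pi$ cuspidal on a semisimple group this is expected (cf.\ the remark in the introduction that semisimplicity eliminates nontrivial $\chi$ once $\tau\circ\sigma$ is irreducible, and that in general the constant-field twists are controlled by the weight analysis underlying Theorem \ref{mainthm}). Granting it, $\tau\circ\sigma$ is pure of weight $0$, a property of the $\ell$-adic representation alone; hence $\phi_v$ is a bounded (tempered) parameter after every $\iota':  \bar{\QQ}\hookrightarrow\bC$, and $\Pi_v$ is $\iota'$-tempered for all $\iota'$.

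The main obstacle---and the reason the statement is left here as a conjecture---is that the ramified place $v$ places two of these ingredients beyond reach. V.\ Lafforgue's construction \cite{Laf18} produces only the semisimple homomorphism $\sigma: Gal(K^{sep}/K)\ra\LGr(\Qlb)$, with no monodromy operator, and local--global compatibility at $v$, identifying $\sigma|_{D_v}$ together with its Weil--Deligne structure with the $L$-parameter of $\Pi_v$ under a local Langlands correspondence for $G(K_v)$, is established only for $GL(n)$ \cite{Laf02} and in scattered further cases; moreover the dictionary ``$\Pi_v$ tempered $\iff\phi_v$ bounded'' is itself part of the conjectural local correspondence for general $G(K_v)$, and the $SL_2$-part of $\phi_v$ must be controlled as well. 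I expect the local--global compatibility at the ramified place to be the crux: it is precisely what would let the $\iota$-dependent \emph{analytic} datum ``the discrete series $\rho$ underlying $\Pi_v$ has unitary central character'' be rewritten as the $\iota$-independent \emph{arithmetic} datum ``the local parameter has integer weight $0$''. This is exactly the point at which the proof of the Main Theorem succeeds unconditionally---there $v$ is unramified, $\sigma|_{D_v}$ is unramified, $\phi_v$ reduces to the Satake conjugacy class, a purely algebraic object, and temperedness is detected on the nose by the Barbasch--Ciubotaru classification of unitary spherical representations---and the point at which the general ramified case presently cannot be closed.
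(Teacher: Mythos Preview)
This statement is labelled a \emph{conjecture} in the paper; the authors write just before it that it ``should be a consequence of the Arthur conjectures, but we have not been able to convince ourselves that we can prove'' it. There is thus no proof in the paper to compare against, and your proposal is---as you yourself acknowledge in the final paragraph---not a proof either but an outline together with a diagnosis of where it breaks down.

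Your overall strategy, reducing the question to showing that $\tau\circ\sigma$ is pure of weight $0$ (an $\iota$-independent property), is the natural one and is exactly what the proof of Theorem \ref{mainthm} achieves under its stronger hypotheses. But your diagnosis of the main obstacle is misdirected. You locate the difficulty in the ramification of $v$ and the absence of local--global compatibility there; in fact Genestier--Lafforgue \cite{GLa} do provide local--global compatibility for the semisimplified parameter at every place, and the paper uses this in the proof of Theorem \ref{mainthm}. More to the point, the obstruction is already present when $v$ is \emph{unramified}, where local--global compatibility (Theorem \ref{SeqT}) is entirely unproblematic. There, $\iota$-temperedness of $\Pi_v$ forces the eigenvalues of $\tau(\sigma(Frob_v))$ to have $\iota$-absolute value $1$; after normalizing each $\chi$ in \eqref{chidecomp} within its equivalence class one may arrange $w_\chi=0$ and $|\iota(\chi(Frob))|=1$. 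But nothing then prevents $|\iota'(\chi(Frob))|\neq 1$ for another embedding $\iota'$: the value $\chi(Frob)\in\bar{\QQ}^\times$ is algebraic, yet no mechanism in sight forces it to be a Weil number. This is exactly the step you flag (``one must promote this to the vanishing of every nontrivial $\chi$'') and then bypass with ``Granting it''; it is the heart of the matter, not a technicality hiding behind ramified local--global compatibility.

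In Theorem \ref{mainthm} this gap is closed not by $\iota$-temperedness but by the hypothesis that some $\pi_u$ lies in the \emph{discrete series}: since $G$ is semisimple such $\pi_u$ is tempered in the $\iota$-independent sense, and the result of \cite{GHS} then gives genuine (not merely $\iota$-)purity of weight $0$ for $\tau\circ\CL^{ss}(\pi_u)$, which via the uniqueness of the decomposition \eqref{chidecomp} eliminates the $\chi$. Under only $\iota$-temperedness at $u$, the paper's argument yields only $\iota$-temperedness at the unramified places (the ``more generally'' clause of Theorem \ref{mainthm}); the conjecture is precisely the missing piece that would upgrade this to full temperedness.
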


}

\section{The main theorems}

Let $\ft^\vee$ denote a Cartan subalgebra of $Lie(\hG)$, $\ft^\vee_\RR$ its real form.  Let $\sigma \in \Phi(U)$  be a parameter as in Theorem \ref{SeqT}, and
let $\pi \subset \CA_0(G,\bar{\QQ})$ be an irreducible representation that belongs to $\sigma$, as in Definition \ref{belongs}.  For any place $w \notin S(U)$ we let $s_w(\sigma)$ 
denote the component in $\hG$ of the local Satake parameter of $\pi$ at $w$.  We write 
\begin{equation}\label{logq}  \nu_w(\sigma) = \log_q(|\iota(s_w(\sigma)|) \in \ft^\vee_\RR.
\end{equation}

We record the following well-known fact, see for example \cite[\S7]{Re}.
\begin{lemma}\label{temperedprin}  Let $\sigma$ be as above.  Then the representation $\pi_w$ is tempered
if and only if $\nu_w(\sigma) = 0$.  Moreover, $\pi_w$ is then an irreducible summand of a principal series representation unitarily induced from a unitary character of the minimal
parabolic subgroup of $G(K_w)$.
\end{lemma}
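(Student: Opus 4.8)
The plan is to reduce everything to standard facts about the spherical (unramified) representation theory of the split $p$-adic group $G(K_w)$, the argument being essentially the one recorded in \cite[\S7]{Re}. Since $w\notin S(U)$, the local component $\pi_w$ is unramified, hence is completely determined by its Satake parameter: $\pi_w$ is the unique spherical subquotient $J(\chi)$ of the unramified principal series $\Ind_{B(K_w)}^{G(K_w)}\chi$, where $B$ is a minimal parabolic with Levi the maximal split torus $T$, and $\chi$ is the unramified character of $T(K_w)$ matching $\iota(s_w(\sigma))\in[T^\vee(\bC)]$ under the Satake correspondence. Under the standard identification of the group of unramified characters of $T(K_w)$ with $\ft^\vee$ modulo the lattice $\tfrac{2\pi i}{\log q}X_*(T^\vee)$, writing $\chi=\chi^{\mathrm{un}}\cdot|\chi|$ with $\chi^{\mathrm{un}}$ unitary, the real part $\log_q|\iota(s_w(\sigma))|$ is precisely the point $\nu_w(\sigma)\in\ft^\vee_\RR$ (well defined up to the Weyl group action, as is $s_w(\sigma)$). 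In particular $\nu_w(\sigma)=0$ if and only if $\chi$ is a unitary character of $T(K_w)$.

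For the implication ``$\nu_w(\sigma)=0\Rightarrow\pi_w$ tempered'', together with the last sentence of the lemma: if $\chi$ is unitary then $\Ind_{B(K_w)}^{G(K_w)}\chi$ is unitarily parabolically induced from the discrete series $\chi$ of the Levi $T$ of the minimal parabolic $B$ — a unitary character of a torus being a discrete series with unitary central character in the sense of \S\ref{iotatempered}. Hence this induced representation is unitary, so completely reducible, and every irreducible summand — in particular $\pi_w$ — is tempered by definition. This proves both the ``if'' direction and the ``moreover''.

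For the converse I would use the theory of the Langlands quotient and Casselman's temperedness criterion. Replacing $\chi$ by a Weyl conjugate (which changes neither $\pi_w\simeq J(\chi)$ nor the truth of $\nu_w(\sigma)=0$) we may arrange that $\mathrm{Re}(\chi)=\nu_w(\sigma)$ is dominant; then $\pi_w=J(\chi)$ is the Langlands quotient of $\Ind_{B(K_w)}^{G(K_w)}\chi$, and its normalized Jacquet module $r_{B}(\pi_w)$ contains the leading exponent $\chi$ with multiplicity one. If $\pi_w$ is tempered, Casselman's criterion applied to this exponent forces $\mathrm{Re}(\chi)=\nu_w(\sigma)$ to lie in the closed antidominant cone $\{x\in\ft^\vee_\RR:\langle x,\check\varpi\rangle\le 0\text{ for all fundamental weights }\check\varpi\}$; being simultaneously dominant and antidominant it must vanish, so $\nu_w(\sigma)=0$. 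The only point that requires care — and the sole place where anything beyond bookkeeping enters — is fixing the normalization conventions consistently (normalized versus $\delta_B^{1/2}$-twisted Jacquet functors, dominant versus antidominant chambers) in the statement of Casselman's criterion and in the location of the leading exponent of the Langlands quotient; with these pinned down both directions are immediate, as in \cite[\S7]{Re}.
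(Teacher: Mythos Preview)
Your proposal is correct and is precisely the standard argument; the paper itself gives no proof of this lemma, merely recording it as ``well-known'' with a pointer to \cite[\S7]{Re}. Your write-up is thus a faithful expansion of that citation, and the one delicate point you flag (that the leading exponent $\chi$ survives in $r_B(J(\chi))$ and that Casselman's criterion then forces the dominant $\nu_w(\sigma)$ into the antidominant cone) is exactly the substance of the cited reference.
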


For any irreducible $M$-dimensional representation $\tau$ of $\LGr$, the composition 
$$\tau\circ\sigma:  Gal(K^{sep}/K)~ \ra ~GL(M,\Qlb)$$
corresponds to a semisimple $\ell$-adic local system, denoted $\CL(\sigma,\tau)$, on the curve $C \setminus |S|$ over the finite field $k_1$.   We write
$\CL(\pi,\tau) = \CL(\sigma,\tau)$ if $\pi$ belongs to $\sigma$.    We make the following hypotheses on the characteristic:

\begin{hyp}\label{Kloo}  If $G$ is of type $B_n$ or $C_n$,
we assume $p > 2$.  If $G$ is of type $F_4$ or $G_2$, we assume $p > 3$. 
\end{hyp}

We now state our main result.

\begin{thm}\label{mainthm}  Let $\pi \subset \CA_0(G,\bar{\QQ})$ with $m_U(\pi) \neq 0$, and suppose
$\pi$ belongs to the parameter $\sigma$.  We assume Hypothesis \ref{Kloo}.  Suppose that:
\begin{enumerate}
\item[(1)] the local component $\pi_v$ at our chosen {quasi-}split unramified place $v$ is generic.  
\item[(2)] there is a place $u \in S$ such that $\pi_u$ belongs to the discrete series.   
\end{enumerate}
Then $\pi_w$ is tempered for all $w \notin S$ {at which $G$ is unramified}. 

{
More generally, with $w$ and $u$ as above, suppose $\pi_u$ is $\iota$-tempered.  Then $\pi_w$ is $\iota$-tempered for all $w \notin S$.  In particular, if $v = u$
and $\pi_v$ is generic and $\iota$-tempered then  $\pi_w$ is $\iota$-tempered for all $w \notin S$.}
\end{thm}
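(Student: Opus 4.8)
The plan is to reduce everything to Deligne's theory of weights applied to the $\ell$-adic local systems $\CL(\sigma,\tau)$, and then to invoke the Barbasch--Ciubotaru classification of generic spherical unitary representations to rule out complementary series. First I would fix the parameter $\sigma$ to which $\pi$ belongs, and for each irreducible representation $\tau$ of $\LGr$ form $\CL(\sigma,\tau)$, a semisimple local system on $C\setminus|S|$. The crucial input is hypothesis (2): the existence of a place $u\in S$ with $\pi_u$ discrete series (or more generally $\iota$-tempered). By the Gan--Harris--Sawin result \cite{GHS} on weights of discrete series, the local monodromy of $\tau\circ\sigma$ at $u$ is pure, which forces the Frobenius eigenvalues of $\tau\circ\sigma$ — a priori Weil numbers of possibly non-integral weight, by Theorem \ref{SeqT} combined with L. Lafforgue's results for $GL(N)$ as recalled after \eqref{chidecomp} — to have \emph{integral} weights. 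Thus $\tau\circ\sigma$ decomposes as a sum of punctually pure local systems of integer weights $w_\chi$ with \emph{no} nontrivial constant-field twist $\chi$.

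The next step is to feed this integrality at the place $v$ into representation theory. By Theorem \ref{SeqT} the Satake parameter $s_v(\sigma)$ is an algebraic conjugacy class in $T^\vee$ whose image under $\iota$ is the Satake parameter of the unitary spherical generic representation $\pi_v$, and whose image under $\iota_\ell$ is $\sigma(Frob_v)$. Write $\nu_v(\sigma)\in\ft^\vee_\RR$ as in \eqref{logq} for the real part. The integrality of weights established above says precisely that for every $\tau$, every eigenvalue of $\tau(\sigma(Frob_v))$ has $\log_q$ of its absolute value a half-integer (indeed an integer, once we account for the sum decomposition), i.e.\ the pairing of $\nu_v(\sigma)$ against every weight of every $\tau$ lies in $\tfrac12\ZZ$. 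Running $\tau$ over the adjoint representation and one minuscule representation where needed, this constrains $\nu_v(\sigma)$ to a lattice. On the other hand, $\pi_v$ being generic unitary spherical, $\nu_v(\sigma)$ must lie in the explicitly described domain of the Barbasch--Ciubotaru classification. The main body of the argument (a case-by-case check over the root system types, using Hypothesis \ref{Kloo} to access \cite{GHS}) shows that the only point in the intersection of the lattice and the generic unitary spherical domain is the origin, \emph{except} in the excluded $PSU(2n+1)$ case — whence $\nu_v(\sigma)=0$, so $\pi_v$ is tempered, and hence by Lemma \ref{temperedprin} every $\tau\circ\sigma$ is punctually pure of weight $0$.

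Once $\CL(\sigma,\tau)$ is pure of weight $0$ for all $\tau$, purity propagates to every unramified place: for any $w\notin S$ at which $G$ is unramified, the Frobenius eigenvalues of $\tau(\sigma(Frob_w))$ all have absolute value $1$ under $\iota$, which is exactly the statement $\nu_w(\sigma)=0$, so $\pi_w$ is tempered by Lemma \ref{temperedprin}. For the $\iota$-tempered refinement one runs the identical argument with $\iota$ fixed throughout: Definition \ref{tempitemp} and the remarks following it ensure that $\pi_u$ being $\iota$-tempered is what is actually needed to get pure local monodromy at $u$ via \cite{GHS}, and the conclusion $\nu_w(\sigma)=0$ is again the assertion that $\pi_w$ is $\iota$-tempered. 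The special case $v=u$ is then immediate: if $\pi_v$ is simultaneously generic and $\iota$-tempered, hypotheses (1) and (2) are both met with the same place, and the conclusion follows verbatim.

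The step I expect to be the main obstacle is the case-by-case verification that integrality of the weights of $\tau\circ\sigma$ at $v$, for $\tau$ adjoint (plus one minuscule $\tau$ for certain types), is incompatible with $\nu_v(\sigma)$ lying in the generic spherical unitary domain unless $\nu_v(\sigma)=0$. This is where the Barbasch--Ciubotaru classification must be unpacked type by type, where the $PSU(2n+1)$ exception and the value $\tfrac12\omega_n$ of Remark \ref{PSU} emerge, and where the characteristic restrictions of Hypothesis \ref{Kloo} enter through the reliance on \cite{GHS}. The reduction to the adjoint group — legitimate because every irreducible spherical representation factors through $G^{\mathrm{ad}}$ — is what makes this bookkeeping finite, but it is still the bulk of the work.
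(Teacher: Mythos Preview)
Your proposal follows the paper's approach closely and is essentially correct: use \cite{GHS} at $u$ together with the uniqueness of the decomposition \eqref{chidecomp} to eliminate the constant-field twists $\chi$, deduce half-integrality of $\langle\alpha,\nu_v(\sigma)\rangle$ for the weights $\alpha$ of the adjoint representation and of a suitable $\tau_0$, invoke the Barbasch--Ciubotaru classification at the generic place $v$ to force $\nu_v(\sigma)=0$, and then propagate weight-$0$ purity to all unramified $w$.

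There is one genuine missing step. You write that the reduction to the adjoint group is ``legitimate because every irreducible spherical representation factors through $G^{\mathrm{ad}}$''. That is the correct justification on the \emph{representation-theoretic} side at $v$, but it does not address the \emph{Galois} side. The auxiliary representation $\tau_0$ (for instance the spin representation in type $C_n$, or the half-spin and standard representations in type $D_n$) is a representation of the simply connected cover of $\hG$, which is $\hG$ itself only when $G$ is adjoint. For general $G$ one cannot form $\tau_0\circ\sigma$ directly. The paper deals with this by lifting the restriction of $\sigma$ to $Gal(K^{\mathrm{sep}}/K')$, for a suitable finite extension $K'/K$, to a parameter valued in $\LGad(\Qlb)$ (the obstruction lies in $H^2$ with coefficients in the finite center, and vanishes after such a base change); the local system $\CL(\pi,\tau_0)$ is then constructed over the corresponding curve $C'\setminus|S'|$, and the \cite{GHS} purity input is applied at a place $u'$ of $K'$ above $u$. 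Your outline should include this lift.

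A minor point: your parenthetical ``(indeed an integer, once we account for the sum decomposition)'' is not right. Integer \emph{weights} mean that the eigenvalues of $\tau(\sigma(Frob_v))$ have $|\cdot|_\iota$ equal to $q^{m/2}$ with $m\in\ZZ$, so $\langle\alpha,\nu_v(\sigma)\rangle\in\tfrac12\ZZ$, exactly as in \eqref{qnumbers}. It is this half-integrality (not integrality) that is fed into Corollary~\ref{c:half-integral}(1) and Corollary~\ref{c:unramified}.
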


If $G$ is of type $C_n$ or $D_n$, we assume to begin with that  $G$ is adjoint, and  subsequently reduce to this case.   We may thus 
choose   representations $\tau_0$ of $\LGr$ so that the restriction to the Langlands dual group $G^\vee$
contains a representation with highest weight $\lambda^\vee$, determined by the following table:
 \begin{enumerate} 
\item[(i)] If $G=PGL(n)$, $n$ even, then $\lambda^\vee = \omega_1^\vee$, with the numbering of the roots in the Dynkin diagram:
\[\xymatrix{1\ar@{-}[r] &2 \ar@{-}[r] &3  \ar@{-}[r] &\dotsb \ar@{-}[r] &(n-1)};
\]
\item[(ii)] If $G=SO(2n+1)$, then $\lambda^\vee = \omega_n^\vee$ for the diagram
\[\xymatrix{1\ar@{<=}[r] &2 \ar@{-}[r] &3  \ar@{-}[r] &\dotsb \ar@{-}[r] &n};
\]
\item[(iii)] If $G=PSp(2n)$, then $\lambda^\vee = \omega_1^\vee$ for the diagram
\[\xymatrix{1\ar@{=>}[r] &2 \ar@{-}[r] &3  \ar@{-}[r] &\dotsb \ar@{-}[r] &n};
\]
\item[(iv)] If $G=PSO(2n)$, then we must take both $\lambda^\vee = \omega_1^\vee$ and $\lambda^\vee = \omega_n^\vee$ for the diagram
\[\xymatrix{1\ar@{-}[rd]\\ &3 \ar@{-}[r] \ar@{-}[ld] &4  \ar@{-}[r] &\dotsb \ar@{-}[r] &n;\\
2}
\]
\item[(v)] If $G=E_7$, then $\lambda^\vee = \omega_7^\vee$ for the diagram
\[\xymatrix{1\ar@{-}[r] &3 \ar@{-}[r]  &4\ar@{-}[d]  \ar@{-}[r] &5 \ar@{-}[r] &6 \ar@{-}[r] &7.\\
&&2}
\]
\end{enumerate}

 The relevant results regarding the unitarity of local factors are based on the classification of generic unitary spherical representations, which is recalled in \S \ref{classificationsection} and \S \ref{s:nonsplit}.
  
  \medskip
  
 The following definition will play an important role in the proof.

\begin{defn}\label{mwf} Let  $w$ be an integer.  A  \emph{pure Weil-Deligne representation of weight $w$} of $K_u$ is a pair $(\varphi,N)$, where 
$$\varphi:  W_{K_u} \ra GL(V)$$
is a representation of the Weil group of the local field $K_u$ on an $m$-dimensional $\Qlb$-vector space $V$, and $N: V \ra V$ is a nilpotent operator, such that
the pair $(\varphi,N)$ satisfies the usual properties of a Weil-Deligne representation, and such that, for any choice of Frobenius element $Frob_u \in W_{K_u}$ we have
\begin{itemize}
\item[(i)]  The eigenvalues of $\varphi(Frob_u)$ are all $q$-numbers of integer weight.
\item[(ii)]  The subspace $W_aV \subset V$ of eigenvectors for $\varphi(Frob_u)$ with eigenvalues of weight $\leq a$ is invariant
under $(\varphi,N)$;
\item[(iii)]  Letting $gr_aV = W_aV/W_{a-1}V$ the map
$$N:  gr_{w-i}V \ra gr_{w+i}V$$
is an isomorphism for all $i$.
\end{itemize}
\end{defn}

 \begin{proof}  First assume $\pi_u$ is in the discrete series.  Recall that we are assuming, temporarily, that $G$ is adjoint
 if it is of type $C_n$ or $D_n$.  Let $\tau$ be either the adjoint representation of $\LGr$ or the representation $\tau_0$ in the above list.
By \cite[Th\'eor\`eme (3.4.1) (i)]{De} or \cite[Corollary VII.8]{Laf02}, the semisimple $\ell$-adic local system $\CL(\pi,\tau_0)$ can be written as a (finite) direct sum
\begin{equation}\label{chidecomp} \CL(\pi,\tau) = \oplus_{\chi/\sim} \CL(\pi,\tau)_\chi \otimes \chi
\end{equation}
where each $\CL(\pi,\tau)_\chi$ is a punctually $\iota$-mixed local system with integer weights
and $\chi$ is the pullback to $C \setminus |S|$ of a rank $1$ $\ell$-adic local system over $Spec(k_1)$, in other words a continuous $\ell$-adic character of $Gal(\bar{k}_1/k_1)$.     Moreover, this decomposition is unique, when $\chi$ is taken to run over the equivalence classes for the following relation:  $\chi$ and $\chi'$ are equivalent, written $\chi \sim \chi'$ in \eqref{chidecomp}, if $\chi'\cdot \chi^{-1}$ is pure of some (integer) weight.

Consider the monodromy representation $\CL(\pi,\tau)_\chi\otimes \chi$ at $u$.    Let
$\CL^{ss}(\pi_u)$ denote the semisimple $\ell$-adic local parameter attached to $\pi_u$ by Genestier and Lafforgue in \cite{GLa}.  
Each irreducible composition factor of $\CL(\pi,\tau)_\chi\otimes \chi$ is then a  constituent of  $\tau\circ \CL^{ss}(\pi_u)$.  
But it is proved in \cite{GHS} that $\tau\circ \CL^{ss}(\pi_u)$
extends uniquely to a {pure Weil-Deligne representation} of weight $0$ for any $\tau$, in the sense of Definition \ref{mwf}.
By the uniqueness of the decomposition \eqref{chidecomp}, this implies that $\CL(\pi,\tau)$ is itself mixed, and indeed is pure of weight $0$.  

It follows that every irreducible component of  the semisimple $\ell$-adic local system $\CL(\pi,\tau)$ is punctually $\iota$-pure, for every $\iota$.   Thus for any
 $w \notin S$, the eigenvalues of $\tau\circ \sigma(Frob_w)$ are Weil $q_w$-numbers, where $q_w$ is the order of the residue field at $w$.   This in turn implies that for any
 weight $\alpha$ of $\tau$, we have
 \begin{equation}\label{qnumbers} \langle \alpha,\nu_w(\sigma) \rangle \in \frac{1}{2}\ZZ.
 \end{equation}
 
In particular, this is the case when $w = v$, where $\pi_v$ is generic. 
Applying this when $\tau$ is the adjoint representation and the representation $\tau_0$ above, Corollary \ref{c:half-integral}(1) (when $G(K_v)$ is split) and Corollary \ref{c:unramified} (when $G(K_v)$ is quasi-split nonsplit)  then imply that $\nu_v(\sigma) = 0$. 
 It follows that, for every $\iota$, every irreducible component of $\CL(\pi,\tau)$ is punctually $\iota$-pure of weight $0$ at $v$, and therefore at every unramified place $w$.  Lemma \ref{temperedprin} then implies that $\pi_w$ is tempered at every unramified place $w$.

If $G$ is of type $C_n$ or $D_n$, we now drop the assumption that $G$ is adjoint.  At this point we would like to say that there is a finite (ramified) cover $C'$ of $C$, with function
field $K'$ such that
$\pi$ admits a base change $\pi'$ to a cuspidal representation of   $G(\ad_{K'})$ with trivial central character.  Then $\pi'$ is a constituent of the pullback
to $G(\ad_{K'})$ of a cuspidal representation of the adjoint quotient of $G$, and we can argue as before.  Since the stable twisted trace formula is not currently available over
function fields, the existence of such a $\pi'$ is not known.  Instead we argue directly on the side of the parameters.  As above, we assume $\pi$ belongs to the $\ell$-adic parameter
$\sigma:  Gal(K^{sep}/K) \ra \LGr(\Qlb)$.  Now the obstruction to lifting $\sigma$ to a homomorphism $\tilde{\sigma}:  Gal(K^{sep}/K) \ra \LGad(\Qlb)$ lies in
$H^2(Gal(K^{sep}/K),Z)$, where $Z$ is the (finite) center of $\LGad(\Qlb)$.  It follows easily that there is a finite extension $K'$ of $K$, as above, such that
the restriction $\sigma'$ to $Gal(K^{sep}/K')$ lifts to a homomorphism $\tilde{\sigma'}:  Gal(K^{sep}/K') \ra \LGad(\Qlb)$.  Let $C'$ be the finite cover
of $C$ with function field $K'$, and let $S'$ be the set of places of $C'$ where $\tilde{\sigma'}$ is ramified.  Now we can compose with $\tau_0$ in the above
list to obtain a local system $\CL(\pi,\tau_0)$ over $C' \setminus |S'|$.  If $u'$ is a place of $K'$ above $u$, then the argument above shows that every component of the
restriction of $\CL(\pi,\tau_0)$ to the decomposition group at $u'$ extends to a pure Weil-Deligne representation of weight $0$, and we conclude as before.\footnote{By a well-known Baire category argument, we know that the image of $\sigma$ lies in $\LGr(E)$ for some finite extension $E$ of $\mathbb{Q}_\ell$.  Johan de Jong has pointed out that this implies that there is an open subgroup of the image of $\sigma$ that lifts to  $\LGad(\Qlb)$, so there is no need even to mention
the obstruction in Galois $H^2$.  }

If now $\pi_u$ is $\iota$-tempered, the Frobenius eigenvalues of $\CL^{ss}(\pi_u)$, composed with $\iota$, have absolute values that are half-integral powers of $q_u$, where $q_u$ is the order
of the residue field at $u$.  The remainder of the argument follows as in the discrete series case.
\end{proof}

\begin{rmk*}  When $\pi_u$ is not supercuspidal, the proof in \cite{GHS} that $\tau\circ \CL^{ss}(\pi_u)$ is pure of weight $0$ is based on an original argument of 
Beuzart-Plessis based in turn on the Deligne-Kazhdan simple trace formula, which is currently the most useful version of the trace formula generally available over function fields.  Beuzart-Plessis's argument will be contained in the appendix to a second draft of \cite{GHS}.
\end{rmk*}

\begin{cor}\label{locgeneric}  Let $\pi$ be as in the statement of Theorem \ref{mainthm}.  Then $\pi_w$ is generic at all unramified places $w$  such that $G(K_w)$ is quasi-split.  In particular, if $G$ is a quasi-split group then $\pi$ is locally generic almost everywhere.
\end{cor}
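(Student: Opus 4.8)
The plan is to read the corollary off from Theorem~\ref{mainthm} together with one classical fact: an unramified tempered representation of an unramified quasi-split $p$-adic group is generic, with respect to the Whittaker datum attached to a hyperspecial vertex. Note first that the condition ``$\pi$ unramified at $w$ and $G(K_w)$ quasi-split'' is in fact just ``$\pi$ unramified at $w$'', since the existence of a hyperspecial maximal compact subgroup of $G(K_w)$ already forces $G$ to be unramified, hence quasi-split, at $w$.

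First I would fix a place $w$ at which $\pi$ is unramified. If $w\notin S$, then Theorem~\ref{mainthm} applies directly. If $w\in S$, I would first replace $U$ by a level subgroup $U'$ with $\pi^{U'}\neq 0$ whose components at $w$ and at $v$ are both hyperspecial; this is possible because $\pi_w$ and $\pi_v$ are unramified, and it does not disturb the hypotheses of Theorem~\ref{mainthm}, because $\pi_u$ is a discrete series and so has no nonzero hyperspecial-fixed vector, forcing $u\in S(U')$, while $\pi_v$ is still generic and unramified at $v\notin S(U')$. In either case Theorem~\ref{mainthm} yields that $\pi_w$ is tempered (and indeed that it is $\iota$-tempered for every $\iota$, since hypothesis (2) of Theorem~\ref{mainthm} is in force).

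Next I would pass from temperedness to genericity. By Lemma~\ref{temperedprin}, $\pi_w$ is an irreducible summand of a representation $I(\chi)=\Ind_{B}^{G(K_w)}\chi$ obtained by unitary induction from a unitary unramified character $\chi$ of a Borel subgroup $B$ of $G(K_w)$, and, being unramified, $\pi_w$ is the unique spherical summand of $I(\chi)$, occurring with multiplicity one. Since $\chi$ is unitary, $I(\chi)$ is a unitary, hence \emph{completely reducible}, representation; this is the point that guarantees that $\pi_w$ is a direct summand of $I(\chi)$ and not merely a subquotient. The Jacquet integral defines a nonzero Whittaker functional $\Lambda$ on $I(\chi)$, and the Casselman--Shalika computation evaluates $\Lambda$ on the normalized spherical section of $I(\chi)$ as a nonzero constant times a product of factors $1-q_w^{-1}\zeta$, the $\zeta$'s being values of $\chi$ on coroots and hence of absolute value $1$; in particular this value is nonzero. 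Therefore $\Lambda$ does not vanish on the spherical section, which generates the summand $\pi_w$, so $\pi_w$ carries a Whittaker model and is generic. (This is standard, and can alternatively be phrased through the unramified local Langlands correspondence: the spherical member of an unramified tempered $L$-packet and the member generic for the Whittaker datum of the hyperspecial vertex both correspond to the trivial character of the component group; see \cite{Re}.) Finally, if $G$ is quasi-split over $K$, then $G(K_w)$ is quasi-split at every place and unramified for all but finitely many $w$, and $\pi$ is unramified for all but finitely many $w$; at such $w$ the preceding argument applies, so $\pi$ is locally generic almost everywhere.

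I do not expect a genuine obstacle: the corollary is essentially formal once Theorem~\ref{mainthm} is available. The only step that uses something nontrivial is the implication ``unramified and tempered $\Rightarrow$ generic'', and there the decisive ingredient is the complete reducibility of $I(\chi)$ for unitary $\chi$: without it the spherical constituent can fail to be generic (for instance, the trivial representation is the spherical constituent of a reducible degenerate principal series), so the Casselman--Shalika non-vanishing, though necessary, becomes conclusive only when $I(\chi)$ is semisimple.
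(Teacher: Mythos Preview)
Your proof is correct. The skeleton is the same as the paper's: apply Theorem~\ref{mainthm} and then reduce to the purely local fact that a spherical tempered representation of an unramified quasi-split group is generic. You are more careful than the paper about the bookkeeping (the possibility that an unramified place $w$ lies in $S(U)$), and your observation that the existence of a hyperspecial subgroup already forces $G$ to be unramified at $w$ is a useful simplification. One small point worth a word of justification: that a discrete series representation of a semisimple quasi-split $p$-adic group has no hyperspecial fixed vector is true but not entirely trivial; you could sidestep it by simply choosing $U'_u$ to be an Iwahori (or smaller) subgroup, which guarantees $u\in S(U')$ without appealing to that fact.

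Where you genuinely diverge from the paper is in the proof of the local implication ``spherical $+$ tempered $\Rightarrow$ generic''. The paper cites \cite[Proposition 7.4]{Re} for the general quasi-split case and then, for split adjoint $G(K_w)$, gives a self-contained argument via the Kazhdan--Lusztig classification \cite{KL}: spherical forces $n=0$ in the triple $(s,n,\psi)$, tempered forces $s$ compact, hence the $q$-eigenspace of $\mathrm{Ad}(s)$ on $\mathfrak g^\vee$ vanishes and $\pi_w$ is the \emph{entire} unramified principal series, which is generic. Your route through the Casselman--Shalika formula is more elementary and works uniformly across isogeny classes and for quasi-split (not just split) $G(K_w)$ --- exactly the ``subtlety having to do with the isogeny class'' that the paper flags and outsources to \cite{Re}. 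In exchange, the paper's KL argument yields the slightly sharper conclusion (in the split adjoint case) that the tempered unramified principal series is in fact irreducible, not merely that its spherical summand is generic.
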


\begin{proof}  It remains to record that if $\pi_w$ is spherical and tempered, then it is generic. This is well known, for example, for the general case, see \cite[Proposition 7.4]{Re}, the subtlety having to do with the isogeny class of $G(K_w)$. When $G(K_w)$ is split adjoint, the claim is easy, and for the benefit of the reader, we explain how it follows from \cite{KL}.  By the classification theorems of \cite{KL}, the smooth irreducible $G(K_w)$-representations with Iwahori fixed vectors are in one-to-one correspondence with  $G^\vee$-conjugacy classes of triples:
\[(s,n,\psi):\ s\in G^\vee \text{ semisimple},\ n\in \mathfrak g^\vee,\ \text{Ad}(s)= q n,
\]
and $\psi$ is an irreducible representation of ``Springer type" of the group of components of the centralizer of $s$ and $n$ in $G^\vee$, such that the center of $G^\vee$ acts by the identity in $\psi$. Notice that $n$ must necessarily be nilpotent. Let $\pi_w(s,n,\psi)$ denote the corresponding $G(K_w)$-representation. Let $\{\bar n,h,n\}$ be a Lie triple in $\mathfrak g^\vee$. We may arrange so that $s\in T^\vee$ (the maximal torus of $G^\vee$) and $h\in \mathfrak t^\vee_{\mathbb R}$. Set $s_0=s q^{-h/2}$ so that $\text{Ad}(s_0)n=n$. The Kazhdan-Lusztig correspondence has a number of properties including:
\begin{enumerate}
\item $\pi_w(s,n,\psi)$ is spherical if and only if $n=0$. Since the centralizer of $s$ in $G^\vee$ is connected, $G^\vee$ being simply connected, the representation $\psi$ is automatically trivial. Moreover, $s$ is the Satake parameter of $\pi_w$.
\item $\pi_w(s,n,\psi)$ is tempered if and only if $s_0$ is compact.
\end{enumerate}
In our case, $\pi_w$ is both spherical and tempered, therefore $\pi_w=\pi_w(s,0,\text{triv})$, where $s=s_0$ is compact. But then the $q$-eigenspace of $\text{Ad}(s)$ in $\mathfrak g^\vee$ is zero, so $\pi_w$ is the only irreducible representation with semisimple parameter $s$, hence it must be the full minimal spherical principal series with Satake parameter $s$. In particular, $\pi_w$ is generic.
\end{proof}

Shahidi's $L$-packet conjecture asserts that every tempered $L$-packet contains a globally generic member.
Since two members of a hypothetical $L$-packet are locally isomorphic almost everywhere, Corollary \ref{locgeneric} is consistent with Shahidi's conjecture.  

\begin{remark}  There are precise conjectures \cite[(8.3)]{LafICM} to the effect  that the following is always true: 

\begin{Expectation}\label{stack}  The irreducible components of the $\ell$-adic local systems $\CL(\pi,\tau) = \CL(\sigma,\tau)$ are realized in the total direct image $Rf_!(\Qlb)$ with compact support, where
$f:  Z \ra C \setminus |S'|$ is a morphism of algebraic stacks 
of finite type
for some finite set $S' \supset S$, when $\tau$ is the adjoint representation of $\LGr$ on $Lie(\hG)$ and also when $\tau = \tau_0$ (if $G$ is not both simply connected and adjoint).
\end{Expectation}

The stacks in question are precisely the Deligne-Mumford
moduli stacks of shtukas that figure in Lafforgue's constructions of Galois parameters.
Assuming this to be the case, it is well-known to experts that every irreducible component of  the semisimple $\ell$-adic local system $\CL(\pi,\tau)$ is punctually $\iota$-pure with integer weights.  However, there seems to be no accessible proof in the literature.  We therefore provide a sketch of a proof in  \S \ref{purity}.  
Admitting the purity for the moment we see that, for any
 $w \notin S(U)$, the eigenvalues of $\tau\circ \sigma(Frob_w)$ are Weil $q_w$-numbers, where $q_w$ is the order of the residue field at $w$.   This implies as above that  any
 weight $\alpha$ of $\tau$ satisfies \eqref{qnumbers}.  In particular, this is the case when $w = v$, where $\pi_v$ is generic.
Reasoning as in the proof of Theorem \ref{mainthm}, Corollary \ref{c:half-integral}  then implies that $\nu_v(\sigma) = 0$. 

 It follows that every irreducible component of $\CL(\pi,\tau)$ is punctually $\iota$-pure of weight $0$ at $v$, and therefore at every unramified place $w$.  Lemma \ref{temperedprin} then implies that $\pi_w$ is tempered at every unramified place $w$.
 
 Lafforgue and Zhu have proved Expectation \ref{stack} to be true when the global parameter of $\pi$ is {\it elliptic}
 \cite[Proposition 1.2]{LZ}.   Presumably any $\pi$ satisfying the hypotheses of Theorem \ref{mainthm} is elliptic.  For elliptic parameters, howver, Theorem \ref{mainthm} is superfluous; it is explained in \S 3 of \cite{LZ} that such $\pi$ are already tempered at 
 unramified places.  
\end{remark}
  

\begin{sketch}[purity]\label{purity} We thank Martin Olsson for the following sketch of the proof of the purity claim.  It suffices in fact to show that the cohomology $Rf_*(\Qlb)$ is $\iota$-mixed.   The main reference is Theorem 2.11
 of the paper \cite{Su}, which proves that the property of being $\iota$-mixed is preserved under the six operations of Laszlo and Olsson (the stack version of Grothendieck's six operations) and Verdier duality.  We thus need to show that $Rf_!$ preserves the property that the weights are integral.   Now one can show that Verdier duality preserves the integrality by using a smooth cover to reduce to the case of schemes, so we may replace $Rf_!$ by $Rf_*$.  As in the argument in \cite{Su}, one reduces to the case when the target stack is a scheme. 
  Thus it suffices to consider the case $f: X \ra Y$, with $X$ an algebraic stack and $Y$.  By using a hypercover of $X$ by schemes and the associated spectral sequence we are thus reduced to the case of schemes.  By Verdier duality again, this reduces to the case of $Rf_!$, which is proved in \cite{De}.
\end{sketch}

\begin{remark}  The proof of Theorem \ref{mainthm} implies, by the arguments in \cite{GHS}, that the semisimple
Genestier-Lafforgue parameters $\CL^{\rm ss}(\pi_u)$ of the local components $\pi_u$ of $\pi$ at all places, including $u \in S$, extend (uniquely) to tempered Weil-Deligne parameters, in the sense defined in \cite{GHS}.  But this does not imply that the ramified components $\pi_u$ themselves are necessarily tempered, although it is generally believed that they must be.

\end{remark}

\section{Half-integral generic spherical unitary points}\label{classificationsection}

We fix notation. Let $k$ be a nonarchimedean local field with discrete valuation $\val_k$, ring of integers $\mathfrak O$, and finite residue field $\bF_q$. Let $G$ be a quasi-simple split $k$-group of adjoint type and $K=G(\mathfrak O)$, the maximal compact hyperspecial subgroup of $G(k)$. Let $B\supset T$ denote a $k$-rational Borel subgroup and a maximal $k$-split torus, respectively. Denote the corresponding based root datum of $(G,B,T)$ by $(X,\Phi, X^\vee,\Phi^\vee,\Pi)$, where $\Pi$ are the simple roots, and let $W$ be the finite Weyl group. Let $\Phi^+\supset\Pi$ denote the positive roots, and $\Phi^{\vee,+}$ the corresponding positive coroots. Let $\langle ~,~\rangle$ be the natural pairing between $X$ and $X^\vee$. Let $G^\vee$ be the complex Langlands dual group with maximal torus $T^\vee=X\otimes_\bZ \bC^\times$. Recall the polar decomposition $T^\vee=T^\vee_\bR\cdot T^\vee_c$, where $T^\vee_\bR=X\otimes_\bZ \bR_{>0}$, $T^\vee_c=X\otimes_\bZ S^1$. 

If $s\in T^\vee\simeq\Hom(X^\vee,\bC^\times)$, let $\chi_s\in X$ denote the unramified character of $T(k)$,
\begin{equation}
\chi_s=s\circ \val_T: T(k)\to \bC^\times,
\end{equation} 
where  $\val_T:T(k)\to X^\vee$ is 
defined by 
\[\langle \val_T(t),\lambda\rangle=\val_k(\lambda(t)),\quad \text{for all }\lambda\in X\text{ and all }t\in T(k).
\]
Regard $\chi_s$ as a character of $B(k)$ by pullback from $T(k)$ and let
\[X(s)=i_{B(k)}^{G(k)}(\chi_s)
\]
be the (unitarily) induced spherical minimal principal series. The representation $X(s)$ has a unique $K$-spherical quotient $L(s)$, and there is a one-to-one correspondence between irreducible $K$-spherical $G(k)$-representations and $W$-orbits in $T^\vee$:
\[s\leftrightarrow L(s).
\]
The semisimple element $s$ (or rather its $W$-orbit in $T^\vee$) is called the \emph{Satake parameter} of the irreducible $K$-spherical representation $L(s)$. 

Let $\nu\in \ft^\vee_\bR=\text{Lie}(T^\vee_\bR)$ and write
\[s_\nu=\exp(\nu\log(q))\in T^\vee_\bR.
\]
Following \cite{BM}, one may refer to these parameters as \emph{real Satake parameters}. Every Satake parameter $s\in T^\vee$ can be written uniquely as
\begin{equation}\label{e:polar}
s=s_c\cdot s_\nu,\text{ for some } s_c\in T^\vee_c\text{ and }\nu\in  \ft^\vee_\bR.
\end{equation}
For simplicity, we also write
\[X(\nu):=X(s_\nu),\quad \nu \in \ft^\vee_\bR,
\]
and $L(\nu)$ for its $K$-spherical subquotient. With this notation, the trivial $G(k)$-representation is $L(\rho)$, where $\rho=\frac 12\sum_{\alpha\in \Phi^+}\alpha\in \ft^\vee_\bR$.

The irreducible unitary spherical $G(k)$-representations with real Satake parameters are classified in \cite{Ba} for symplectic and orthogonal split groups, \cite{Ci-F4,Ci-E6,BC-E8} for exceptional split groups, via solving the equivalent corresponding problem for Iwahori-Hecke algebras \cite{BM}. The case $G=GL(n)$ is by now well known and it is due to Tadi\' c \cite{Ta}.

We will only be interested in the cases when $X(s)=L(s)$, that is the case of $K$-spherical representations which are also generic (in the sense of admitting Whittaker vectors). It is well known that when $\nu \in  \ft^\vee_\bR$,
\[X(\nu)=L(\nu) \text{ if and only if } \langle\al^\vee,\nu\rangle\neq 1\text{ for all }\al^\vee\in \Phi^{\vee,+}.
\]

Denote
\begin{equation}
\begin{aligned}
\CC_0&=\{\nu \in  \ft^\vee_\bR\mid \langle \al^\vee,\nu\rangle\ge 0,\text{ for all }\al\in \Pi\},\\
\quad \CU^g_0&=\{\nu\in \CC^\vee_0\mid   \langle\al^\vee,\nu\rangle\neq 1\text{ for all }\al^\vee\in \Phi^{\vee,+}, \text{ and }L(\nu)\text{ is unitary}\}.
\end{aligned}
\end{equation}
Again it is well known that for $L(\nu)$ to admit a nonzero invariant Hermitian form it is necessary and sufficient that 
\begin{equation}
w_0(\nu)=-\nu,\text{ where } w_0 \text{ is the longest Weyl group element}.
\end{equation}
This is automatic when $\Phi^\vee$ is of types $B,C, G_2, F_4, E_7, E_8$. Denote 
\[\CC_{0,h}=\{\nu\in \CC^\vee_0\mid w_0(\nu)=-\nu\},
\]
so that $\CU^g_0\subset \CC_{0,h}.$

The explicit description of $\CU^g_0$ is given in {\it loc. cit.}. The nature of the answer is the following. The arrangement of hyperplanes
\[\alpha^\vee=1,\quad \alpha^\vee\in \Phi^{\vee,+},
\]
partitions the  fundamental Weyl chamber $\CC_0$ into $m$ open regions, 
\[m=\prod_{i=1}^\ell \frac{d_i+h}{d_i},\]
where $h$ is the Coxeter number, $\ell$ is the rank, and $d_i$ are the fundamental degrees for the \emph{coroot system} $\Phi^\vee$. For example, for $F_4$ there are $105$, while for $E_8$, there are $25080$ such open regions. It is clear that $\CU^g_0$ is a union of open regions in this arrangement of hyperplanes intersected with $\CC_{0,h}$. 

For each simple root system, denote by $\al_i$, $\al_i^\vee$, $\omega_i$, $i=1,\dots,\ell$ the simple roots, simple coroots, and fundamental weights, respectively, and
\[\rho=\sum_{i=1}^\ell \om_i.
\]
Consider the poset of positive coroots. We say that a positive coroot $\al^\vee$ has level $r$ if $\langle \al^\vee,\rho\rangle=r$. Let $\gamma^\vee$ be the highest positive coroot, the unique coroot of level $h-1$. Recall that an \emph{alcove} is a connected (open) component in 
\[
\ft^\vee_\bR\setminus \bigcup_{\alpha^\vee\in\Phi^\vee, m\in \bZ_{\ge 0}} \{\nu\in \ft^\vee_\bR \mid \langle\alpha^\vee,\nu\rangle=m\}.
\]
In particular, the \emph{fundamental alcove} is
\begin{equation}
\CA_0=\{\nu\in \CC_0\mid \langle \gamma^\vee,\nu\rangle<1\}.
\end{equation}

By a well-known deformation argument (\emph{complementary series}), it is easy to see that:
\begin{lemma}
$\CA_0\cap \CC_{0,h}\subseteq \CU^g_0.$
\end{lemma}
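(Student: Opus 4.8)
The plan is to run the classical complementary-series deformation argument, taking as anchor the tempered, hence unitary, spherical principal series $X(0)=L(0)$. First I would record the geometry of the region: $\CA_0\cap\CC_{0,h}$ is the intersection of the closed cone $\CC_0$, the open half-space $\{\langle\gamma^\vee,\nu\rangle<1\}$, and the linear subspace $\{w_0\nu=-\nu\}$, hence is convex, in particular connected, and it contains the origin (one has $w_0\cdot 0=0$, $\langle\al_i^\vee,0\rangle=0$ for all $i$, and $\langle\gamma^\vee,0\rangle=0<1$). Next, for every $\nu\in\CA_0$ and every positive coroot $\al^\vee$ one has $\langle\al^\vee,\nu\rangle\le\langle\gamma^\vee,\nu\rangle<1$: indeed $\gamma^\vee-\al^\vee$ is a nonnegative integral combination of simple coroots because $\gamma^\vee$ is the highest coroot, and it pairs non-negatively with $\nu\in\CC_0$. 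In particular $\langle\al^\vee,\nu\rangle\ne 1$, so by the irreducibility criterion recalled above $X(\nu)=L(\nu)$ for all $\nu\in\CA_0$; applying the same criterion to $-\nu$, for which all $\langle\al^\vee,-\nu\rangle\le 0$, shows $X(-\nu)=L(-\nu)$ is irreducible as well. Thus on $\CA_0$ it only remains to prove unitarity.

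For $\nu\in\CC_{0,h}$ we have $w_0\nu=-\nu$, so the Hermitian dual of $X(\nu)$ is $X(-\nu)\cong X(w_0\nu)\cong X(\nu)$, and hence $L(\nu)$ carries a nonzero invariant Hermitian form; on the irreducibility locus $X(\nu)=L(\nu)$ this form is unique up to a real scalar and, its radical being a proper subrepresentation, is automatically nondegenerate. I would realize all the $X(\nu)$ on the fixed ``compact picture'' space of functions on $K$, with the $K$-action independent of $\nu$, and produce the form as the composite of the long intertwining operator $A(w_0,\nu)\colon X(\nu)\to X(-\nu)$ --- normalized by the Gindikin--Karpelevich factors so as to fix the spherical line --- with the canonical identification of $X(-\nu)$ with the Hermitian dual of $X(\nu)$. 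Restricted to each $K$-isotypic subspace, this is a continuous (indeed rational in the variables $q^{-\langle\al_i^\vee,\nu\rangle}$) family of nondegenerate Hermitian matrices on a fixed finite-dimensional space, so its signature is locally constant on the connected set $\CA_0\cap\CC_{0,h}$; at $\nu=0$ the form is that of the unitary tempered representation $X(0)$, hence definite, so it is definite on all of $\CA_0\cap\CC_{0,h}$. Therefore $L(\nu)$ is unitary for every $\nu\in\CA_0\cap\CC_{0,h}$, which together with the first paragraph is precisely the assertion $\CA_0\cap\CC_{0,h}\subseteq\CU^g_0$.

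The point that needs care --- and the reason one restricts to the \emph{fundamental} alcove --- is the holomorphy and non-vanishing on the spherical line of the normalized operator $A(w_0,\nu)$ throughout the open alcove $\CA_0$, which is what makes $\langle\;,\;\rangle_\nu$ a genuinely nonzero (hence nondegenerate) continuous family of invariant Hermitian forms there. This holds because no reducibility wall $\langle\al^\vee,\nu\rangle=1$ meets $\CA_0$ (by the highest-coroot estimate above), while the only poles of the unnormalized operator lying in $\CA_0$ --- those along the walls $\langle\al^\vee,\nu\rangle=0$ of $\CC_0$ --- are cancelled by the zeros of the inverse normalizing factor. This is exactly the classical ``complementary series'' input used systematically in \cite{Ba,Ci-F4,Ci-E6,BC-E8}; alternatively one may transport the whole argument through the equivalence of \cite{BM} to the graded affine Hecke algebra, where the invariant Hermitian form on $X(\nu)$ and its signature on the fundamental alcove are computed by elementary rank-one reductions.
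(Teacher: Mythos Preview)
Your proposal is correct and is precisely the ``well-known deformation argument (complementary series)'' that the paper invokes; the paper itself gives no proof beyond that one-line reference, so your write-up simply spells out what the authors left implicit. The key steps you isolate --- irreducibility throughout $\CA_0$ via the highest-coroot estimate, existence of the Hermitian form on $\CC_{0,h}$, and continuity of the signature along the connected region anchored at the tempered point $\nu=0$ --- are exactly the ingredients of the classical argument.
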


Another classical result says that no unbounded region can be unitary, see \cite{HM} (or \cite[\S3.3]{BC-gen} for a proof in this setting):

\begin{lemma}\label{l:rho}
If $\nu\in \CC_0$ is such that $\langle\al^\vee_i,\nu\rangle>1$ for a simple coroot $\al^\vee_i$, then $\nu\notin \CU^g_0.$
\end{lemma}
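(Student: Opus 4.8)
The goal is to prove that $L(\nu)$ is not unitary, which gives $\nu\notin\CU^g_0$. If $\langle\al^\vee,\nu\rangle=1$ for some positive coroot $\al^\vee$ then $\nu\notin\CU^g_0$ by definition, so we may assume $X(\nu)=L(\nu)$; if moreover $w_0\nu\neq-\nu$, then $L(\nu)$ carries no nonzero invariant Hermitian form and is therefore not unitary. We may thus assume $w_0\nu=-\nu$ and $X(\nu)=L(\nu)$, so it suffices to show that the invariant Hermitian form on the minimal principal series $X(\nu)$ (unique up to scalar) is not positive semidefinite.

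The plan is to analyze this form through its standard combinatorial model. By Borel--Casselman together with the unitarity-preserving equivalence of Barbasch--Moy \cite{BM}, $X(\nu)$ is unitary precisely when the associated spherical module of the graded affine Hecke algebra --- with equal parameters, since $G$ is split --- is unitary. As a $W$-representation this module is the regular representation $\bC[W]$, and, up to a positive constant, the invariant form is given by the long intertwining operator $A(\nu)\in\End_W(\bC[W])$, which factors as a product of $N=|\Phi^+|$ rank-one operators indexed by a reduced expression $w_0=s_{\beta_1}\cdots s_{\beta_N}$; the $k$-th factor depends only on a pairing $\langle\gamma_k^\vee,\nu\rangle$, and the positive coroots $\gamma_k^\vee$ run exactly once over $\Phi^{\vee,+}$. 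Positive semidefiniteness of the form is equivalent to positivity of the (Hermitian-symmetrized) action of $A(\nu)$ on each $W$-isotypic component of $\bC[W]$; on the trivial isotypic this action is positive because $\nu\in\CC_0$.

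The heart of the matter is to exhibit a $W$-type on which positivity fails once $\langle\al_i^\vee,\nu\rangle>1$. I would use the reflection representation $V$ of $W$, which together with the trivial representation comprises the ``first two'' $W$-types in the Barbasch--Ciubotaru analysis of the generic spherical unitary locus. Restricting $A(\nu)$ to the line in $V$ fixed by the parabolic subgroup generated by $\{s_j:j\neq i\}$, and choosing the reduced word to begin with $s_i$, the rank-one reduction of \cite{BM} expresses the eigenvalue on this line as a product over $\Phi^{\vee,+}$ of explicit rank-one factors, the one attached to $\al_i^\vee$ being $\frac{1-\langle\al_i^\vee,\nu\rangle}{1+\langle\al_i^\vee,\nu\rangle}$, which is negative as soon as $\langle\al_i^\vee,\nu\rangle>1$. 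Since distinct $W$-types are orthogonal and $A(\nu)$ is block diagonal, the form restricted to the span of the spherical vector and this vector of $V$ is diagonal, positive on the spherical vector; hence a negative eigenvalue on the second basis vector makes the form indefinite, so $X(\nu)$ is not unitary.

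The step I expect to be the real obstacle is making this negativity hold uniformly: one must show that the product of the remaining rank-one factors cannot reverse the sign contributed by $\al_i^\vee$ for any $\nu\in\CC_0$ with $\langle\al_i^\vee,\nu\rangle>1$ --- and not merely for $\nu$ close to the wall $\langle\al_i^\vee,\cdot\rangle=1$, where a soft complementary-series deformation would suffice. Establishing this uniform control is exactly the classical $\rho$-bound for spherical unitary representations; one can either push the bookkeeping along the reduced word to its conclusion (a computation conveniently organized inside the reflection representation) or simply invoke \cite{HM}, respectively the Hecke-algebra formulation in \cite[\S3.3]{BC-gen}. With this in hand, Lemma \ref{l:rho} follows.
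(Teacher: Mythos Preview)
Your proposal is correct and aligns with the paper's treatment: the paper does not give an independent argument but simply records the lemma as classical and cites \cite{HM} and \cite[\S3.3]{BC-gen}, exactly the references you invoke for the decisive step. Your sketch of the intertwining-operator mechanism on the reflection $W$-type is a faithful outline of what those references do, so the two approaches coincide.
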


But beyond these classical results, the determination of $\CU^g_0$ is difficult and has been achieved via a case-by-case analysis. We will use some elements of the explicit description which will be recorded in the sequel, but here let us give the general form of the answer.

\begin{thm}[\cite{Ba,Ci-F4,Ci-E6,BC-E8}]\label{t:alcoves} Suppose $G$ is a quasi-simple split group of adjoint type. The set $\CU_0^g$ is the intersection of the hermitian locus $\CC_0$ with a union of $2^d$ alcoves in $\CC_0$ (including the fundamental alcove $\CA_0$), where
\begin{itemize}
\item  $d=0$, if $G=PGL(n)$ or $SO(2n+1)$;
\item $d=\lfloor\frac {n-1}2\rfloor$ is $G=PSp(2n)$;
\item $d=n-1$ if $G=PSO(4n)$, $n\ge 2$, or if $G=PSO(4n+2)$, $n\ge 1$;
\item $d=1$, if $G=G_2,$ $F_4$, or $E_6$;
\item $d=3$, if $G=E_7$;
\item $d=4$, if $G=E_8$.
\end{itemize}
\end{thm}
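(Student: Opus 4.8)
The plan is to prove Theorem~\ref{t:alcoves} by reducing it, type-by-type, to the explicit classification of $\CU_0^g$ carried out in \cite{Ba,Ci-F4,Ci-E6,BC-E8} and in \cite{Ta} for $GL(n)$. The unifying observation is that $\CU_0^g$ is, by the remarks preceding the statement, a union of open regions of the hyperplane arrangement $\{\alpha^\vee=1\}$ intersected with $\CC_{0,h}$, and that it always contains the fundamental alcove $\CA_0$ (by the complementary-series Lemma). So the content of the theorem is the \emph{count} of how many of these regions actually survive the unitarity condition, and the claim that each surviving region is in fact a full alcove of the affine arrangement (not merely a region of the $\{\alpha^\vee=1\}$ arrangement). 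First I would recall, for each type, the explicit list of unitary regions from \emph{loc.\ cit.}: for $PGL(n)$ this is Tadi\'c's result that the spherical unitary dual with real parameter is exactly $\CA_0\cap\CC_{0,h}$ (so $d=0$); for $SO(2n+1)$ Barbasch's classification likewise gives only the fundamental alcove; for $PSp(2n)$ and $PSO(2n)$ one reads off from \cite{Ba} the description of the "extra" unitary pieces in terms of strings, and for the exceptional groups one reads the tables in \cite{Ci-F4,Ci-E6,BC-E8}.

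Next, for each type I would verify two things separately. (a) The number of maximal unitary regions is $2^d$ with the stated $d$; this is a bookkeeping step matching the explicit parametrization of unitary real parameters in \emph{loc.\ cit.} In the classical cases the unitary "isolated" or "complementary" pieces are indexed by subsets of a set of size $d$ (e.g.\ independent choices of whether certain coordinate strings are "long" or "short"), which produces the power of two; in the exceptional cases it is a direct table count ($F_4,E_6,G_2$: two alcoves; $E_7$: eight; $E_8$: sixteen). (b) Each such region is an open alcove of the full affine arrangement $\{\langle\alpha^\vee,\nu\rangle=m:\alpha^\vee\in\Phi^\vee,m\in\ZZ_{\ge0}\}$. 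For this I would use Lemma~\ref{l:rho}: unitarity forces $\langle\alpha_i^\vee,\nu\rangle\le 1$ for all simple coroots, so every unitary region lies in the slab $0\le\langle\alpha_i^\vee,\nu\rangle\le1$; combined with the hyperplanes $\{\alpha^\vee=1\}$ already present, the region is cut out by affine walls of the form $\langle\alpha^\vee,\nu\rangle\in\{0,1\}$, hence is a union of affine alcoves, and the explicit description shows it is a single one. The role of the hermitian condition $w_0(\nu)=-\nu$ is just to intersect everything with the linear subspace $\CC_{0,h}$, which is why the statement is phrased as "the intersection of $\CC_0$ with a union of $2^d$ alcoves."

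The main obstacle is genuinely the case-by-case nature of the argument: there is no uniform proof, and the verification that the unitary locus is \emph{exactly} $2^d$ alcoves — in particular that no further, smaller region survives and that nothing outside the listed alcoves is unitary — rests entirely on the hard classification results of \cite{Ba,Ci-F4,Ci-E6,BC-E8}, obtained via the Barbasch–Moy reduction to Iwahori–Hecke algebras \cite{BM}. For the exceptional types $E_7$ and $E_8$ this is a substantial computer-assisted computation in \emph{loc.\ cit.}; here I would simply quote it. The only genuinely new work is organizational: assembling the disparate parametrizations into the uniform statement "$2^d$ alcoves," pinning down $d$ in each type, and checking the alcove (as opposed to merely region) claim via Lemma~\ref{l:rho}. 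I would present the classical types $PGL(n)$, $SO(2n+1)$, $PSp(2n)$, $PSO(2n)$ first, since there the combinatorics of strings makes the exponent $d$ transparent, and then dispatch $G_2,F_4,E_6,E_7,E_8$ by citation to the tables.
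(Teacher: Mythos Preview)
The paper does not prove this theorem at all: it is stated with the citation \cite{Ba,Ci-F4,Ci-E6,BC-E8} in the theorem heading and no proof is given, since it is being quoted as an input from the existing literature on the spherical unitary dual. Your proposal---to assemble the statement type by type from those references, using the Barbasch--Moy reduction and the explicit tables---is exactly how one would justify the statement, and is consistent with how the paper treats it; there is simply nothing in the paper to compare your argument against beyond the bare citation.

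One small point in your sketch deserves care. Your step (b), arguing that each unitary region of the $\{\alpha^\vee=1\}$ arrangement is automatically a single affine alcove because Lemma~\ref{l:rho} confines $\nu$ to the slab $0\le\langle\alpha_i^\vee,\nu\rangle\le 1$, is not quite sufficient on its own: inside that slab a non-simple coroot $\beta^\vee$ can still take integer values larger than $1$, so a region of the coarser $\{\alpha^\vee=1\}$ arrangement may a priori be subdivided by walls $\beta^\vee=2,3,\dots$ into several alcoves. That each surviving unitary region is in fact a single alcove is part of the content of the cited classifications, not a formal consequence of Lemma~\ref{l:rho}; you should quote it rather than try to derive it.
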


\subsection{Integral unitary points for the adjoint $G^\vee$-representation}

We prove first that there are no nonzero generic unitary parameters $\nu$ that are integral with respect to the adjoint $G^\vee$-representation. 

\begin{prop}\label{p:integral-unitary}
If $\nu\in\CC_0$ is such that $\langle\alpha^\vee,\nu\rangle\in \bZ_{>0}$ for a positive coroot $\alpha^\vee$, then $\nu\notin \CU^g_0.$
\end{prop}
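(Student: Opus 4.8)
The plan is to reduce Proposition \ref{p:integral-unitary} to the classical boundedness result Lemma \ref{l:rho} together with the fine structure of $\CU^g_0$ recorded in Theorem \ref{t:alcoves}. First I would dispose of the simple-root case: if $\langle\alpha_i^\vee,\nu\rangle\in\bZ_{>0}$ for a \emph{simple} coroot, then either $\langle\alpha_i^\vee,\nu\rangle=1$, so $X(\nu)\neq L(\nu)$ and $\nu$ is excluded from $\CU^g_0$ by definition, or $\langle\alpha_i^\vee,\nu\rangle\geq 2>1$, and Lemma \ref{l:rho} applies directly. So the content is to show that $\langle\alpha^\vee,\nu\rangle\in\bZ_{>0}$ for a non-simple positive coroot $\alpha^\vee$ already forces $\nu\notin\CU^g_0$.

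The key step is the following dichotomy for $\nu\in\CU^g_0$: by Theorem \ref{t:alcoves}, $\nu$ lies in one of finitely many alcoves $\CA$ contained in $\CC_0$, and on each such alcove every $\langle\alpha^\vee,\nu\rangle$ (for $\alpha^\vee\in\Phi^{\vee,+}$) lies strictly between two consecutive integers $m_{\alpha^\vee}$ and $m_{\alpha^\vee}+1$, since an alcove is by definition a connected component of the complement of all the affine hyperplanes $\langle\alpha^\vee,\nu\rangle=m$. Hence if $\nu$ is in the (open) interior of one of these alcoves, no pairing $\langle\alpha^\vee,\nu\rangle$ can be an integer at all, and we are done. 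The remaining possibility is that $\nu$ lies on a wall of an alcove, i.e.\ on some hyperplane $\langle\beta^\vee,\nu\rangle=m$; here I would use that $\CU^g_0$, being a \emph{closed} subset of $\CC_{0,h}$ cut out as the intersection with a union of closures of alcoves, meets such a wall only on the boundary between two alcoves (or between an alcove and the exterior). In the former case one can perturb $\nu$ within $\CU^g_0$ into the interior of an adjacent alcove; the integrality $\langle\alpha^\vee,\nu\rangle\in\bZ$ is then contradicted unless $\alpha^\vee=\pm\beta^\vee$ up to the wall in question — and for $\beta^\vee$ itself the value $m$ must then be $0$ (giving $\nu$ on a root hyperplane through the origin, pushing $\nu$ toward a lower-dimensional face, where one induces on the rank), or $m\geq 1$, in which case the deformation argument together with Lemma \ref{l:rho} (applied after conjugating $\nu$ into the dominant chamber for a sub-root-system containing $\beta^\vee$ as a simple coroot, or by directly invoking the fact that no unbounded/too-large region is unitary) rules it out.

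Concretely I expect the cleanest write-up to avoid the wall analysis by arguing as follows: suppose $\nu\in\CU^g_0$ and $\langle\alpha^\vee,\nu\rangle=N\in\bZ_{>0}$. Choose $w\in W$ so that $w\alpha^\vee$ is dominant; replacing $\nu$ by $w\nu$ changes neither membership in $\CU^g_0$ (which is $W$-stable in the relevant sense) nor the value $N$. Write $\alpha^\vee=\sum_i c_i\alpha_i^\vee$ with $c_i\geq 0$; since $\langle\alpha_i^\vee,\nu\rangle\geq 0$ for all $i$ and their weighted sum is the positive integer $N$, at least one simple coroot has $\langle\alpha_i^\vee,\nu\rangle>0$. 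If some $\langle\alpha_i^\vee,\nu\rangle\geq 1$ we invoke the simple-root case above. Otherwise $0<\langle\alpha_i^\vee,\nu\rangle<1$ for every simple coroot, so $\nu$ lies in the fundamental alcove $\CA_0$ (using $\langle\gamma^\vee,\nu\rangle<1$, which follows once all simple pairings are $<1$ and one checks the highest coroot — this is where I'd use $\langle\gamma^\vee,\nu\rangle = \sum \langle\alpha_i^\vee,\nu\rangle\cdot(\text{coefficients})$ and the standard fact that $\gamma^\vee$ has all coefficients at least $1$ and sums, against $\rho$, to $h-1$; a short case check or the alcove description of Theorem \ref{t:alcoves} handles it). But on $\CA_0$ every $\langle\beta^\vee,\nu\rangle$ lies in $(0,1)$, contradicting $\langle\alpha^\vee,\nu\rangle=N\geq 1$.

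The main obstacle will be the boundary case — handling $\nu$ that sits exactly on one of the affine walls $\langle\beta^\vee,\nu\rangle=m$ with $m\geq 1$, since then $\nu$ is not in the open fundamental alcove and the naive argument breaks. The resolution is to note such a wall lies outside the union of (closed) alcoves making up $\CU^g_0$ unless it is a common face of two of them, and the deformation/complementary-series argument underlying Lemma \ref{l:rho} shows the region on the far side of any wall $\langle\alpha_i^\vee,\nu\rangle=1$ is non-unitary, so $\nu$ on that wall is a limit of non-unitary parameters from one side and unitary from the other, which by continuity of the Hermitian form forces it to lie on the closure of the unitary region but never strictly past $\langle\alpha_i^\vee,\nu\rangle=1$ — i.e.\ the only integral values attainable are $0$, handled by descent to a lower-rank Levi. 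I would present the simple-root reduction and the fundamental-alcove argument in detail and dispatch the wall case by this continuity-plus-induction remark.
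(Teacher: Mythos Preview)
Your second paragraph already contains the entire proof, and you fail to recognize it. Theorem \ref{t:alcoves} states that $\CU^g_0$ is (the intersection of the hermitian locus with) a union of $2^d$ \emph{alcoves}, and by the paper's definition an alcove is an \emph{open} connected component of the complement of all the affine hyperplanes $\langle\alpha^\vee,\cdot\rangle=m$, $m\in\bZ_{\ge 0}$. Hence if $\langle\alpha^\vee,\nu\rangle$ is a positive integer then $\nu$ lies on one of these hyperplanes and so belongs to no alcove; therefore $\nu\notin\CU^g_0$. That is the paper's proof in full: two sentences, no case analysis, no reduction to simple coroots, no deformation.

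Your ``remaining possibility'' --- that $\nu$ sits on a wall of a closed alcove contained in $\CU^g_0$ --- does not arise, and the premise you invoke for it is false: $\CU^g_0$ is \emph{not} closed in $\CC_{0,h}$. It is the intersection of an open condition (irreducibility, $\langle\alpha^\vee,\nu\rangle\neq 1$) with a closed one (unitarity). Already for $PGL(2)$ one has $\CU^g_0=\{\nu:\ 0\le\langle\alpha^\vee,\nu\rangle<1\}$, which is not closed; in every type the closure of the fundamental alcove meets the hyperplane $\langle\gamma^\vee,\nu\rangle=1$, whose points are excluded from $\CU^g_0$ by definition. So the wall analysis, the continuity-of-Hermitian-form remark, and the induction on rank are all superfluous.

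Your alternative ``cleanest'' argument also contains a genuine error. From $0<\langle\alpha_i^\vee,\nu\rangle<1$ for all simple coroots it does \emph{not} follow that $\langle\gamma^\vee,\nu\rangle<1$: in type $A_2$ one has $\gamma^\vee=\alpha_1^\vee+\alpha_2^\vee$, and $\langle\alpha_1^\vee,\nu\rangle=\langle\alpha_2^\vee,\nu\rangle=0.6$ gives $\langle\gamma^\vee,\nu\rangle=1.2$. Thus $\nu$ need not lie in $\CA_0$ and the intended contradiction fails. (You also slide from ``at least one simple pairing is positive'' to ``every simple pairing is positive'' without justification.)
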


\begin{proof}
Let $\nu\in\CC_{0,h}$, otherwise it can't be a unitary parameter since it is not hermitian. If $\langle\alpha^\vee,\nu\rangle=m$ for a positive coroot $\alpha^\vee$ and a positive integer $m$, then by definition, $\nu$ does not belong to any alcove in $\CC_0$. Since $\CU_0^g$ is a union of alcoves by Theorem \ref{t:alcoves}, $\nu\notin \CU_0^g$.
\end{proof}

\subsection{Half-integral unitary points for the adjoint $G^\vee$-representation}

We would like to determine the generic unitary parameters $\nu$ that are also $\epsilon$-half-integral with respect to the adjoint $G^\vee$-representation for a fixed real number $0\le\epsilon<\frac 12$. In other words, we need to calculate:
\begin{equation}
\CU^{g,\frac 12+\epsilon}_0=\CU^g_0\cap \CC_0^{\frac 12+\epsilon},\text{ where }\CC_0^{\frac 12+\epsilon}=\{\nu\in \CC_0\mid \langle \al^\vee,\nu\rangle\in \epsilon+\frac 12\bZ,\text{ for all }\al^\vee\in \Phi^{\vee}\}.
\end{equation}
Since the condition has to be satisfied for both $\al^\vee$ and $-\al^\vee$, it follows immediately that $2\epsilon\in \frac 12\bZ$, hence $\CC_0^{\frac 12+\epsilon}=\emptyset$ unless 
\[\epsilon\in \{0,\frac 14\}.
\]
If the $\Phi$ contains a simple root subsystem of rank at least $2$ (in particular, if $G$ is quasi-simple and $G\neq PGL(2)$), then there exist simple coroots $\al_1^\vee$ and $\al_2^\vee$ such that $\beta^\vee=\al_1^\vee+\al_2^\vee$ is also a coroot. Then 
\[\langle\beta^\vee,\nu\rangle=\langle \al_1^\vee,\nu\rangle+\langle \al_2^\vee,\nu\rangle\in 2\epsilon+\frac 12\bZ.
\]
On the other hand, $\langle\beta^\vee,\nu\rangle\in \epsilon+\frac 12\bZ$. From this, $\epsilon\in\frac 12\bZ$ and so $\epsilon=0$ necessarily. Hence, if $G$ is quasi-simple of rank at least $2$,
\begin{equation}
\CC^{\frac 12+\epsilon}_0=\emptyset\text{ unless }\epsilon=0.
\end{equation}
For $G=PGL(2)$, we will only be interested in $\epsilon=0$ as well. 

\medskip

We now consider the case $\ep=0$. The ordering of the roots and the explicit coordinates are given in the subsections, for exceptional groups they are the Bourbaki ones; the coordinates are such that the pairings are just the dot product of vectors. Write a point $\nu\in \CC_0$ in the \emph{fundamental weight coordinates}: 
\[\nu=\sum_{i=1}^\ell \nu_i\omega_i,\quad \nu_i\ge 0,\text{ for all }1\le i\le \ell.\]
Then $\nu\in\CC_0^{\frac 12}$ if and only if $\nu_i\in \frac 12\bZ_{\ge 0}$ for all $i$. By Lemma \ref{l:rho}, it follows that a necessary condition for $\nu\in \CU^{g,\frac 12}_0$ is
\begin{equation}\label{e:a-i}
\nu_i\in \{0,\frac 12\},\text{ for all } 1\le i\le \ell.
\end{equation}

\begin{thm}\label{t:half-adjoint}
Suppose $G$ is a quasi-simple $k$-split group of adjoint type. The set $\CU^{g,\frac 12}_0$ is as follows:
\begin{itemize}
\item $G=PGL(n)$, $n$ even, $\CU^{g,\frac 12}_0=\{0,\frac 12\omega_{\frac n2}\}$;
\item $G=SO(2n+1)$,  $\CU^{g,\frac 12}_0=\{0,\frac 12\omega_1\}$;
\item $G=PSp(2n)$,  $\CU^{g,\frac 12}_0=\{0,\frac 12\omega_n\}$;
\item $G=PSO(2n)$, $n$ even,  $\CU^{g,\frac 12}_0=\{0,\frac 12\omega_1,\frac 12\omega_2,\frac 12\omega_n\}$;
\item $G=PSO(2n)$, $n$ odd,  $\CU^{g,\frac 12}_0=\{0,\frac 12\omega_n\}$;
 \item $G=E_7$, $\CU^{g,\frac 12}_0=\{0,\frac 12\omega_7\}$;
 \item For all other cases, $\CU^{g,\frac 12}_0=\{0\}$.
 \end{itemize}
\end{thm}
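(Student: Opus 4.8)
The plan is to run the finite list of candidates already obtained — the parameters $\nu_J:=\frac12\sum_{i\in J}\omega_i$ for $J\subseteq\{1,\dots,\ell\}$, with $\nu_i\in\{0,\frac12\}$ forced by Lemma~\ref{l:rho} — through three filters. \emph{Hermiticity}: letting $i\mapsto i^\ast$ be the diagram involution determined by $-w_0\omega_i=\omega_{i^\ast}$, one has $\nu_J\in\CC_{0,h}$ exactly when $J=J^\ast$. \emph{Genericity}: if $\alpha^\vee=\sum_j c_j(\alpha^\vee)\alpha_j^\vee$ is a positive coroot then $\langle\alpha^\vee,\nu_J\rangle=\frac12\sum_{i\in J}c_i(\alpha^\vee)$, so the condition $\langle\alpha^\vee,\nu_J\rangle\neq 1$ fails exactly when some positive coroot satisfies $\sum_{i\in J}c_i(\alpha^\vee)=2$. \emph{Unitarity}: by Theorem~\ref{t:alcoves}, $\CU^g_0$ is contained in the union of the closures $\overline{\CA_0},\dots,\overline{\CA_{2^d-1}}$ of the $2^d$ distinguished alcoves, and conversely $\CA_0\cap\CC_{0,h}\subseteq\CU^g_0$ by the complementary-series lemma stated just before Lemma~\ref{l:rho}.

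The second step isolates the candidates lying in $\overline{\CA_0}$. Writing $g_i:=\langle\gamma^\vee,\omega_i\rangle\ge 1$, we have $\langle\gamma^\vee,\nu_J\rangle=\frac12\sum_{i\in J}g_i$, so $\nu_J\in\overline{\CA_0}$ iff $\sum_{i\in J}g_i\le 2$, which forces either $J=\varnothing$, or $|J|=1$ with $g_i\in\{1,2\}$, or $|J|=2$ with $g_i=g_j=1$. In the two cases where the sum equals $2$, the coroot $\gamma^\vee$ has $\langle\gamma^\vee,\nu_J\rangle=1$, so genericity fails. The candidates in $\overline{\CA_0}$ that survive the filters are thus $\nu=0$ and the points $\frac12\omega_i$ with $g_i=1$: for such $i$, every positive coroot is dominated coefficientwise by $\gamma^\vee$, so $c_i(\alpha^\vee)\le 1$ and $\langle\alpha^\vee,\frac12\omega_i\rangle\in\{0,\frac12\}$ for all $\alpha^\vee\in\Phi^{\vee,+}$; hence $\frac12\omega_i\in\CA_0$, and if $i=i^\ast$ then $\frac12\omega_i\in\CA_0\cap\CC_{0,h}\subseteq\CU^g_0$, whereas if $i\neq i^\ast$ it is not even Hermitian. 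Running through the Dynkin types, the nodes with $g_i=1$ are the minuscule fundamental weights, and the self-dual ones among them are: the middle node of $A_{n-1}$ for $n$ even; the spin node of $B_n$; the vector end node of $C_n$; all three end nodes of $D_n$ when $n$ is even and only the vector node when $n$ is odd; the minuscule node of $E_7$; and none for $E_6$ (whose two nodes with $g_i=1$ are interchanged by $-w_0$), $E_8$, $F_4$, $G_2$. Matching this against the labelings fixed in the statement produces exactly the listed nonzero parameters together with $\nu=0$.

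It remains to exclude the candidates $\nu_J\notin\overline{\CA_0}$, i.e.\ with $\sum_{i\in J}g_i\ge 3$, which by the previous step are the only ones not already killed by Hermiticity or genericity. When $d=0$ — the cases $G=PGL(n)$ and $G=SO(2n+1)$ — one has $\CU^g_0=\CA_0\cap\CC_{0,h}$ and there is nothing left to prove. In all other cases one proceeds type by type, taking the explicit defining inequalities of the $2^d-1$ non-fundamental distinguished alcoves from \cite{Ba} (for $PSp(2n)$ and $PSO(2n)$), \cite{Ci-F4,Ci-E6,BC-E8} (for $G_2,F_4,E_6,E_7,E_8$), and \cite{Ta} via the reduction of $GL(n)$ to $PGL(n)$, and verifying that none of those alcoves meets the finite set of points whose fundamental-weight coordinates all lie in $\{0,\frac12\}$. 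Since any such point pairs with every positive coroot to a value in $\frac12\bZ_{\ge 0}$, and a non-fundamental alcove is separated from $\CA_0$ by a wall $\langle\alpha^\vee,\nu\rangle=1$, such an alcove can contain one of these points only if some positive coroot takes value $\ge\frac32$ there; one checks from the tables that this never happens for a point also passing the other two filters. This last case-by-case verification — most laboriously for $E_7$ ($8$ alcoves), $E_8$ ($16$ alcoves), and for $PSp(2n)$ and $PSO(2n)$ of large rank — is the main obstacle: it rests on the explicit output of the classification recalled in \S\ref{classificationsection}, and I see no uniform argument that would bypass it.
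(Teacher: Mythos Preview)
Your outline is correct, and your uniform treatment of the fundamental alcove is a genuine simplification not present in the paper: the observation that the half-integral candidates surviving inside $\CA_0$ are exactly $0$ and the points $\tfrac12\omega_i$ for self-dual minuscule nodes $i$ is clean and type-independent, and it immediately produces the list in the statement.

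The difference lies in how the candidates outside $\overline{\CA_0}$ are excluded. You propose to test each surviving $\nu_J$ against the explicit descriptions of the $2^d-1$ non-fundamental unitary alcoves, and you leave this as an unperformed check against the tables. The paper never enumerates those alcoves. Instead, for each type it extracts from the classification a single sharper datum: a level $r_0\ge 1$ with the property that $\nu\in\CU^g_0$ forces $\langle\alpha^\vee,\nu\rangle<1$ for \emph{all} coroots $\alpha^\vee$ of level $r_0$. Imposing these inequalities on a half-integral $\nu$ collapses the candidate set to at most a handful of points in every type (for $E_8$, four coroots at level $15$ already force $\nu_2=\cdots=\nu_7=0$ and $\nu_1+\nu_8<1$). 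Each surviving candidate is then seen either to lie on the hyperplane $\gamma^\vee=1$, hence to fail genericity, or to lie in $\CA_0$. So the paper's argument terminates without ever consulting the non-fundamental alcoves at all.

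Your heuristic in the last paragraph does not actually constrain anything: any $\nu_J\notin\overline{\CA_0}$ automatically has $\langle\gamma^\vee,\nu_J\rangle\ge\tfrac32$, so ``some positive coroot takes value $\ge\tfrac32$'' is not a condition but a tautology. What remains is the bare appeal to the tables, which is legitimate but is exactly the labor you flagged as the main obstacle. The paper's $r_0$-bound device is the missing shortcut: it replaces your alcove-by-alcove verification with a single linear inequality (or a short list of them) per type, and these inequalities are already recorded in the cited classification papers.
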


\begin{proof}
The strategy for each simple root system is the following:

\begin{enumerate}
\item The classification of the generic spherical unitary dual implies, in particular, that there exists a maximum level $r_0\ge 1$ such that $\nu\in \CU^g_0$ \emph{only if} $\langle \al^\vee,\nu\rangle<1$ for \emph{all} $\al^\vee$ of level $r_0$. The fact that $r_0\ge 1$ is equivalent with Lemma \ref{l:rho}.

Record which of these points satisfy $\langle \al^\vee,\nu\rangle<1$ for  \emph{all} $\al^\vee$ of level $r_0$.
\item For each one of the resulting points, check if it belongs to one of the hyperplanes of reducibility. In all cases, except in the exceptions listed in the theorem, the nonzero points surviving step (3) lie on the hyperplane $\gamma^\vee=1$, where $\gamma^\vee$ is the highest coroot. Otherwise, verify that the remaining point(s) lie in the one of the unitary regions of $\CU^g_0$; it turns out they are always in the fundamental alcove $\gamma^\vee<1$.
\end{enumerate}
The details for each simple root system are presented in the next subsections. 
\end{proof}

\subsection{$PGL(n)$} For type $A$, it is more convenient to use standard coordinates, rather than coordinates with respect to the fundamental weights. The positive roots are $\ep_i-\ep_j$, $1\le j\le i\le n$, and we write a typical dominant hermitian parameter $\nu\in \CC_0$ as:
\begin{align*}
\nu&=(a_1,a_2,\dots,a_k,-a_k,-a_{k-1},\dots,-a_1), &\text{ if }n&=2k,\\
\nu&=(a_1,a_2,\dots,a_k,0,-a_k,-a_{k-1},\dots,-a_1), &\text{ if }n&=2k+1,
\end{align*}
where $a_1\ge a_2\ge\dots\ge a_k\ge 0.$ The unitarity condition is \cite{Ta}:
\begin{equation}\label{bound-A}
0\le a_k\le a_{k-1}\le \dots\le a_2\le a_1< \frac 12.
\end{equation}
Suppose $n=2k$ (even). Then $\nu$ is half-integral if and only if $a_k\in \frac 14\bZ$ and $\nu_i-\nu_{i+1}\in \frac 12\bZ$, $1\le i\le k-1$. Hence the only possible nonzero point in the unitary region is
\[\frac 12\om_{k}, \text{ for which }\nu_1=\nu_2=\dots=\nu_k=\frac 14.
\]
This point is in the unitary region obviously.

\smallskip

If $n=2k+1$ (odd), then $\nu$ is half-integral if and only if $a_i\in \frac 12\bZ$, but no such point is in the unitary region (\ref{bound-A}).

\subsection{$SO(2n+1)$} In the coordinates $\ep_1,\dots,\ep_n$ in $\bR^n$,  we use the coroots 
\[\al_1^\vee=2\ep_1,\ \al_2^\vee=-\ep_1+\ep_2,\  \al_3^\vee=-\ep_2+\ep_3,\dots,\  \al_n^\vee=-\ep_{n-1}+\ep_n,
\]
and weights
\[\om_1=\frac 12(\ep_1+\ep_2+\dots+\ep_n),\quad \om_i=\ep_i+\ep_{i+1}+\dots+\ep_n,\ 2\le i\le n.
\]
The highest coroot is $\gamma^\vee=2\ep_n$. From \cite[Theorem 3.1]{Ba}, $r_0=2n-1$ and $\gamma^\vee$ is the only coroot of that level. If we write $\nu=\sum_{i=1}^n\nu_i\om_i \in\CC_0^{\frac 12}$, $\nu_1,\nu_2\in \frac 12\bZ_{\ge 0}$, then the unitary bound implies
\[\nu_1+2\nu_2+2\nu_3+\dots+2\nu_n<1,
\]
hence $\nu_2=\nu_3=\dots=\nu_n=0$. We have two points left: $0$ and $\frac 12\om_1$. Since $\langle \gamma^\vee,\frac 12\om_1\rangle=\frac 12<1$, this point is in the fundamental alcove, hence unitary.

\subsection{$PSp(2n)$}\label{sub:psp}  In the coordinates $\ep_1,\dots,\ep_n$ in $\bR^n$,  we use the coroots 
\[\al_1^\vee=\ep_1,\ \al_2^\vee=-\ep_1+\ep_2,\  \al_3^\vee=-\ep_2+\ep_3,\dots,\  \al_n^\vee=-\ep_{n-1}+\ep_n,
\]
and weights
\[\om_i=\ep_i+\ep_{i+1}+\dots+\ep_n,\ 1\le i\le n.
\]
The highest coroot is $\gamma^\vee=\ep_{n-1}+\ep_n$. From \cite[Theorem 3.1]{Ba}, we can deduce that 
\[r_0=\begin{cases} n,& n \text{ odd},\\n+1, &n \text{ even}.\end{cases}
\]
If $n$ is odd, the coroots at level $r_0$ are
\begin{equation}
\ep_1+\ep_{n-1},\ \ep_2+\ep_{n-2},\dots,\ \ep_{\frac {n-1}2}+\ep_{\frac {n+1}2},\ \ep_n,
\end{equation}
while if $n$ is even, they are
\begin{equation}
\ep_1+\ep_{n},\ \ep_2+\ep_{n-1},\dots,\ \ep_{\frac {n}2}+\ep_{\frac {n}2+1}.
\end{equation}
Write $\nu=\sum_{i=1}^n\nu_i\om_i \CC_0^{\frac 12}$, $\nu_1,\nu_2\in \frac 12\bZ_{\ge 0}$. Then these unitary bounds imply, when $n$ is odd:
\begin{align*}
2\nu_1+\nu_2+\nu_3+\dots+\nu_{n-1}&<1,\\
2\nu_1+2\nu_2+\nu_3+\dots+\nu_{n-2}&<1,\\
\vdots\\
2\nu_1+2\nu_2+\dots+2\nu_{\frac{n-1}2}+\nu_{\frac{n+1}2}&<1,\\
\nu_1+\nu_2+\dots+\nu_n&<1,
\end{align*}
and when $n$ is even:
\begin{align*}
2\nu_1+\nu_2+\nu_3+\dots+\nu_{n}&<1,\\
2\nu_1+2\nu_2+\nu_3+\dots+\nu_{n-1}&<1,\\
\vdots\\
2\nu_1+2\nu_2+\dots+2\nu_{\frac{n}2}+\nu_{\frac{n}2+1}&<1.\\
\end{align*}
In both cases, it follows immediately that
\[\nu_1=\nu_2=\dots=\nu_{\lfloor\frac n2\rfloor}=0,
\]
and
\[\nu_{\lfloor\frac n2\rfloor+1}+\nu_{\lfloor\frac n2\rfloor+2}+\dots+\nu_n<1.
\]
This means that at most one $\nu_j=\frac 12$, $j=\lfloor\frac n2\rfloor+1,\dots,n$, and the others are $0$. This gives the half-integral points $0$ and
\[\frac 12\om_j,\quad j=\lfloor\frac n2\rfloor+1,\dots,n.
\]
But for $j<n$, $\frac 12\om_j$ lies on the hyperplane $\gamma^\vee=1$, so the only nonzero point left is $\frac 12\om_n$. Since $\langle \gamma^\vee,\frac 12\om_n\rangle=\frac 12<1$, this point is in the fundamental alcove, hence unitary.

\subsection{$PSO(2n)$} In the coordinates $\ep_1,\dots,\ep_n$ in $\bR^n$,  we use the coroots 
\[\al_1^\vee=\ep_1+\ep_2,\ \al_2^\vee=-\ep_1+\ep_2,\  \al_3^\vee=-\ep_2+\ep_3,\dots,\  \al_n^\vee=-\ep_{n-1}+\ep_n,
\]
and weights
\[\om_1=\frac 12(\ep_1+\ep_2+\dots+\ep_n),\ \om_2=\frac 12(-\ep_1+\ep_2+\dots+\ep_n),\quad \om_i=\ep_i+\ep_{i+1}+\dots+\ep_n,\ 3\le i\le n.\]
The highest coroot is $\gamma^\vee=\ep_{n-1}+\ep_n$. Write $\nu=\sum_{i=1}^n\nu_i\om_i \CC_0^{\frac 12}$, $\nu_1,\nu_2\in \frac 12\bZ_{\ge 0}$. If $n$ is odd, we need to impose in addition that $\nu_1=\nu_2$ in order for this parameter to correspond to a hermitian spherical representation.

Suppose $n$ is even. From \cite[Theorem 3.1]{Ba}, we can deduce that $r_0=n-1$ and the coroots at this level are
\begin{equation}
\ep_1+\ep_n,\ -\ep_1+\ep_n,\ \ep_2+\ep_{n-1},\ \ep_3+\ep_{n-2},\dots,\ \ep_{\frac n2}+\ep_{\frac n2+1}.
\end{equation}
The unitary bound given by these coroots implies (via a similar discussion to that for $Sp(2n)$, $n$ even), that
\[\nu_3=\nu_4=\dots=\nu_{\frac n2}=0,
\]
and at most one of $\nu_1,\nu_2,\nu_j$, $j=\frac n2+1,\dots,n$, can equal $\frac 12$, the rest being zero. In addition to the origin $0$, we are left with
\[\frac 12\om_1,\frac 12\om_2,\quad \frac 12\om_j,\ j=\frac n2+1,\dots,n.
\]
All $\frac 12\om_j$, $\frac n2+1\le j\le n-1$ lie on the hyperplane $\gamma^\vee=1$, so the only nonzero points remaining are $\frac 12\om_1,\frac 12\om_2,\frac 12\om_n$, for which we have
\[\langle \gamma^\vee,\frac 12\om_1\rangle=\langle \gamma^\vee,\frac 12\om_2\rangle=\langle \gamma^\vee,\frac 12\om_n\rangle=\frac 12<1,
\]
meaning they are in the fundamental alcove and hence unitary.

\smallskip

If $n$ is odd, a similar analysis holds, except that now $\frac 12\om_1$ and $\frac 12\om_2$ are not hermitian.

\subsection{$G_2$} In Bourbaki coordinates, realize the root system of type $G_2$ in the hyperplane perpendicular to the vector $(1,1,1)$ in $\bR^3$:
\begin{equation}
\begin{aligned}
\al_1&=(2,-1,-1) & \al_1^\vee&=(\frac 23,-\frac 13,-\frac 13) &\omega_1&=(1,1,-2)\\
\al_2&=(-1,1,0) & \al_2 ^\vee&=(-1,1,0) &\omega_2&=(0,1,-1).
\end{aligned}
\end{equation}
Write $\nu=\nu_1\omega_1+\nu_2\omega_2\in \CC_0^{\frac 12}$, $\nu_1,\nu_2\in \frac 12\bZ_{\ge 0}$. From \cite[Appendix B]{Ci-F4}, we see that $r_0=3$ and there is only one coroot $\beta^\vee=2\al_1^\vee+\alpha_2^\vee$ at this level. Hence a necessary condition for unitarity is
\begin{equation}
2\nu_1+\nu_2<1.
\end{equation}
From this, $2\nu_1<1$, so necessarily $\nu_1=0$. Then $\nu_2<1$, so $\nu_2=0$ or $\nu_2=\frac 12$. If $\nu_2=\frac 12$, we get the point $\nu=\frac 12\omega_2$, but this lies on the hyperplane $\gamma^\vee=1$, where $\gamma^\vee=3\alpha_1^\vee+2\alpha_2^\vee$ is the longest coroot.

\subsection{$F_4$} In coordinates in $\bR^4$:
\begin{equation}
\begin{aligned}
\al_1&=\frac 12(1,-1,-1,-1) &\al_1^\vee&=(1,-1,-1,-1) &\omega_1&=(1,0,0,0)\\
\al_2&=(0,0,0,1) &\al_2^\vee&=(0,0,0,2) &\omega_2&=(\frac 32,\frac 12,\frac 12,\frac 12)\\
\al_3&=(0,0,1,-1) &\al_3^\vee&=(0,0,1,-1) &\omega_3&=(2,1,1,0)\\
\al_4&=(0,1,-1,0) &\al_4^\vee&=(0,1,-1,0) &\omega_4&=(1,1,0,0).
\end{aligned}
\end{equation}
From \cite[Table, p. 126]{Ci-F4}, we see that $r_0=9$ and there is only one coroot $\beta^\vee=(1,1,1,-1)=\al_1^\vee+2\al_2^\vee+4\al_3^\vee+2\al_4^\vee$ of level $9$.

Write $\nu=\sum_{i=1}^4\nu_i\omega_i\in \CC_0^{\frac 12}$, $\nu_i\in \frac 12\bZ_{\ge 0}$ for all $i$. Using $\langle\beta^\vee,\nu\rangle<1$, we get
\begin{equation}
\nu_1+2\nu_2+4\nu_3+2\nu_4<1.
\end{equation}
Immediately, $\nu_2=\nu_3=\nu_4=0$ and $\nu_1<1$. So either $\nu_1=\frac 12$ or $\nu_1=0$. In the first case, we get the point $\nu=\frac 12\omega_1$, but this lies on the hyperplane $\gamma^\vee=1$, where $\gamma^\vee=(2,0,0,0)$ is the longest coroot. Thus we are left with the origin.

\subsection{$E_8$} In coordinates in $\bR^8$ with respect to a basis $\ep_i$, $1\le i\le 8$:
\begin{equation}
\begin{aligned}
\al_1^\vee&=\frac 12(1,-1,-1,-1,-1,-1,-1,1), &\al_2^\vee&=\ep_1+\ep_2,& \al_3^\vee&=-\ep_1+\ep_2, & \al_4^\vee&=-\ep_2+\ep_3, \\
 \al_5^\vee&=-\ep_3+\ep_4,  &\al_6^\vee&=-\ep_4+\ep_5,&  \al_7^\vee&=-\ep_5+\ep_6,  &\al_8^\vee&=-\ep_6+\ep_7;\\
\end{aligned}
\end{equation}
\begin{equation}
\begin{aligned}
\om_1&=2\ep_8, &\om_2&=(\frac 12,\frac 12,\frac 12,\frac 12,\frac 12,\frac 12,\frac 12,\frac 52), &\om_3&=(-\frac 12,\frac 12,\frac 12,\frac 12,\frac 12,\frac 12,\frac 12,\frac 72),\\
 \om_4&=(0,0,1,1,1,1,1,5),& \om_5&=(0,0,0,1,1,1,1,4), &\om_6&=(0,0,0,0,1,1,1,3),\\
 \om_7&=(0,0,0,0,0,1,1,2), &\om_8&=\ep_7+\ep_8.
\end{aligned}
\end{equation}
From \cite[\S7.2.3]{BC-E8}, we see that $r_0=15$ and there are four coroots at level $15$. They are:
\begin{equation}\label{bound-E8}
\begin{aligned}
\beta_1^\vee&=\al_1^\vee+2\al_2^\vee+2\al_3^\vee+4\al_4^\vee+3\al_5^\vee+2\al_6^\vee+\al_7^\vee=\frac 12(1,-1,1,1,1,1,-1,1) \\
\beta_2^\vee&=\al_1^\vee+2\al_2^\vee+2\al_3^\vee+3\al_4^\vee+3\al_5^\vee+2\al_6^\vee+\al_7^\vee+\al_8^\vee=\frac 12(1,1,-1,1,1,-1,1,1) \\
\beta_3^\vee&=\al_1^\vee+2\al_2^\vee+2\al_3^\vee+3\al_4^\vee+2\al_5^\vee+2\al_6^\vee+2\al_7^\vee+\al_8^\vee=\frac 12(1,1,1,-1,-1,1,1,1) \\
\beta_4^\vee&=\al_1^\vee+\al_2^\vee+2\al_3^\vee+3\al_4^\vee+3\al_5^\vee+2\al_6^\vee+2\al_7^\vee+\al_8^\vee=\frac 12(-1,-1,-1,1,-1,1,1,1). \\
\end{aligned}
\end{equation}
These are the coroots labelled $89,90,91,92$ in {\it loc. cit.}.

Write $\nu=\sum_{i=1}^8\nu_i\omega_i\in \CC_0^{\frac 12}$, $\nu_i\in \frac 12\bZ_{\ge 0}$ for all $i$. Using (\ref{bound-E8}), we immediately see that necessary conditions are
\[\ 2\nu_2<1,\ 2\nu_3<1,\ 4\nu_4<1,\ 3\nu_5<1,\ 2\nu_6<1,\ 2\nu_7<1,\text{ and } \nu_1+\nu_8<1.
\]
From this, we must have 
\[\nu_2=\nu_3=\nu_4=\nu_5=\nu_6=\nu_7=0,\text{ and }\nu_1,\nu_8\in\{0,\frac 12\}\text{ but not }\nu_1=\nu_8=\frac 12.
\]
This means that there are three remaining points to check:
\begin{itemize}
\item $\nu=0$, this is the origin;
\item $\nu=\frac 12\omega_1$;
\item $\nu=\frac 12\omega_8$;
\end{itemize}
But the last two points are both on the hyperplane $\gamma^\vee=1$, where $\gamma^\vee=\ep_7+\ep_8$ is the highest coroot.

\subsection{$E_7$} We use $\al_i^\vee$, $1\le i\le 7$ from $E_8$, but the coordinates for the fundamental weights are different:
\begin{equation}
\begin{aligned}
\om_1&=(0,0,0,0,0,0,-1,1) &\om_2&=(\frac 12,\frac 12,\frac 12,\frac 12,\frac 12,\frac 12,-1,1), &\om_3&=(-\frac 12,\frac 12,\frac 12,\frac 12,\frac 12,\frac 12,-\frac 32,\frac 32),\\
 \om_4&=(0,0,1,1,1,1,-2,2),& \om_5&=(0,0,0,1,1,1,-\frac 32,\frac 32), &\om_6&=(0,0,0,0,1,1,-1,1),\\
 \om_7&=(0,0,0,0,0,1,-\frac 12,\frac 12).
\end{aligned}
\end{equation}
From \cite[\S7.2.2]{BC-E8}, we see that $r_0=9$ and there are four coroots at level $9$. They are:
\begin{equation}\label{bound-E7}
\begin{aligned}
\beta_1^\vee&=\al_1^\vee+\al_2^\vee+2\al_3^\vee+2\al_4^\vee+2\al_5^\vee+\al_6^\vee=\frac 12(-1,1,-1,1,1,-1,-1,1) \\
\beta_2^\vee&=\al_1^\vee+\al_2^\vee+2\al_3^\vee+2\al_4^\vee+\al_5^\vee+\al_6^\vee+\al_7^\vee=\frac 12(-1,1,1,-1,-1,1,-1,1) \\
\beta_3^\vee&=\al_1^\vee+\al_2^\vee+\al_3^\vee+2\al_4^\vee+2\al_5^\vee+\al_6^\vee+\al_7^\vee=\frac 12(1,-1,-1,1,-1,1,-1,1) \\
\beta_4^\vee&=\al_2^\vee+\al_3^\vee+2\al_4^\vee+2\al_5^\vee+2\al_6^\vee+\al_7^\vee=\ep_5+\ep_6. \\
\end{aligned}
\end{equation}
These are the coroots labelled $46,47,48,49$ in {\it loc. cit.}.

Write $\nu=\sum_{i=1}^7\nu_i\omega_i\in \CC_0^{\frac 12}$, $\nu_i\in \frac 12\bZ_{\ge 0}$ for all $i$. Using (\ref{bound-E7}), we immediately see that necessary conditions are
\[\ 2\nu_3<1,\ 2\nu_4<1,\ 2\nu_5<1,\ 2\nu_6<1, \text{ and } \nu_1+\nu_2+\nu_7<1.
\]
From this, we must have 
\[\nu_3=\nu_4=\nu_5=\nu_6=0,\text{ and }\nu_1,\nu_2,\nu_7\in\{0,\frac 12\}\text{ but only one is nonzero}.
\]

This means that there are four remaining points to check:
\begin{itemize}
\item $\nu=0$, this is the origin;
\item $\nu=\frac 12\omega_7$;
\item $\nu=\frac 12\omega_1$;
\item $\nu=\frac 12\omega_2$;
\end{itemize}
But the last two points are both on the hyperplane $\gamma^\vee=1$, where $\gamma^\vee=-\ep_7+\ep_8$ is the highest coroot. On the other hand, the second point, $\nu=\frac 12\omega_7$ satisfies
\[\langle\gamma^\vee,\frac 12\omega_7\rangle=\frac 12<1,
\]
so it lies in the (open) fundamental alcove, which is always unitary.

\subsection{$E_6$} We use $\al_i^\vee$, $1\le i\le 6$ from $E_8$, but the coordinates for the fundamental weights are different:
\begin{equation}
\begin{aligned}
\om_1&=(0,0,0,0,0,-2/3,-2/3,2/3), &\om_2&=(\frac 12,\frac 12,\frac 12,\frac 12,\frac 12,-\frac 12,-\frac 12,\frac 12),\\
 \om_3&=(-\frac 12,\frac 12,\frac 12,\frac 12,\frac 12,-\frac 56,-\frac 56,\frac 56),
 &\om_4&=(0,0,1,1,1,-1,-1,1),\\ \om_5&=(0,0,0,1,1,-\frac 23,-\frac 23,\frac 23), &\om_6&=(0,0,0,0,1,-\frac 13,-\frac 13,\frac 13).
 \end{aligned}
\end{equation}
Write $\nu=\sum_{i=1}^6\nu_i\omega_i\in \CC_0^{\frac 12}$, $\nu_i\in \frac 12\bZ_{\ge 0}$ for all $i$. Such a parameter is hermitian ($w_0(\nu)=-\nu$) if and only if 
\[\nu_1=\nu_6\text{ and }\nu_3=\nu_5.
\]
From \cite[\S3.5]{Ci-E6}, see also \cite[\S7.2.1]{BC-E8},  we see that $r_0=9$ and there is one coroot at level $9$:
\begin{equation}\label{bound:E6}
\beta^\vee=\al_1^\vee+\al_2^\vee+2\al_3^\vee+2\al_4^\vee+2\al_5^\vee+\al_6^\vee=\frac 12(-1,1,-1,1,1,-1,-1,1).
\end{equation}
From this, it is immediate that necessary conditions are:
\[2\nu_3<1,\ 2\nu_4<1,\ 2\nu_5<1,\text{ and }\nu_1+\nu_2+\nu_6<1.
\]
Hence $\nu_3=\nu_4=\nu_5=0$. Since $\nu_1=\nu_6$, it also follows that $\nu_1=\nu_6=0$. We are left with $\nu_2<1$. The only other point besides the origin is $\nu=\frac 12\omega_2$. But this point lies on the hyperplane $\gamma^\vee=1$, where $\gamma^\vee=\frac 12(1,1,1,1,1,-1,-1,1)$ is the highest coroot.

\subsection{Other fundamental representations} We retain the notation from the previous section. Let $V=V(\lambda^\vee)$ be an irreducible finite-dimensional representation of $G^\vee$ with highest weight $\lambda^\vee\in \ft$.

\begin{lemma}\label{l:half-other}
Let  $\nu\in \CC_0^{\frac 12}\subset \ft^\vee$ be given. Then $\nu$ is half-integral from $V(\lambda^\vee)$ if and only if $\langle\lambda,\nu\rangle \in \frac 12\bZ$.
\end{lemma}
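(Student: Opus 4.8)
The plan is to reduce the statement to the elementary fact that all weights of the irreducible $G^\vee$-module $V(\lambda^\vee)$ lie in a single coset of the coroot lattice. First I would make the notion explicit: to say that $\nu$ is \emph{half-integral from} $V(\lambda^\vee)$ means $\langle\mu,\nu\rangle\in\frac 12\bZ$ for every weight $\mu$ of $V(\lambda^\vee)$ (for the adjoint representation this is exactly the condition $\nu\in\CC_0^{\frac 12}$, since the nonzero weights of the adjoint representation of $G^\vee$ are the coroots $\Phi^\vee$).

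The forward implication is immediate: the highest weight $\lambda^\vee$ is one of the weights of $V(\lambda^\vee)$, so if $\nu$ is half-integral from $V(\lambda^\vee)$ then in particular $\langle\lambda^\vee,\nu\rangle\in\frac 12\bZ$.

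For the converse I would use that any weight $\mu$ of $V(\lambda^\vee)$ differs from $\lambda^\vee$ by an element of the root lattice of $G^\vee$, i.e. of the $\bZ$-span $\bZ\Phi^\vee$ of the coroots of $G$; write $\mu=\lambda^\vee-\xi$ with $\xi\in\bZ\Phi^\vee$. Since by hypothesis $\nu\in\CC_0^{\frac 12}$, we have $\langle\al^\vee,\nu\rangle\in\frac 12\bZ$ for all $\al^\vee\in\Phi^\vee$, and hence $\langle\xi,\nu\rangle\in\frac 12\bZ$ for every $\xi\in\bZ\Phi^\vee$ by $\bZ$-linearity of the pairing. Assuming $\langle\lambda^\vee,\nu\rangle\in\frac 12\bZ$, we then conclude $\langle\mu,\nu\rangle=\langle\lambda^\vee,\nu\rangle-\langle\xi,\nu\rangle\in\frac 12\bZ$ for every weight $\mu$, which is precisely half-integrality from $V(\lambda^\vee)$. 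I do not expect any real obstacle here: the argument is purely lattice-theoretic, and the only points requiring care are spelling out the meaning of ``half-integral from $V(\lambda^\vee)$'' and invoking the standard fact that the weights of an irreducible $G^\vee$-module form a single coset modulo $\bZ\Phi^\vee$.
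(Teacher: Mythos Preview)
Your proposal is correct and follows essentially the same argument as the paper: both use that every weight of $V(\lambda^\vee)$ differs from $\lambda^\vee$ by an element of the coroot lattice $\bZ\Phi^\vee$, and then invoke $\nu\in\CC_0^{\frac 12}$ to conclude that the pairing with any such lattice element lies in $\frac 12\bZ$. The paper's proof is slightly terser (it writes the difference explicitly as $\sum a_i\alpha_i^\vee$ with $a_i\in\bZ_{\ge 0}$ and does not separately spell out the trivial forward implication), but the content is identical.
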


\begin{proof}
If $\lambda'^\vee$ is a weight of $V(\lambda)$, then we may write $\lambda'^\vee=\lambda^\vee-\sum_{i=1}^\ell a_i\alpha_i^\vee$, where $a_i\in \bZ_{\ge 0}$. Then 
\[\langle \lambda'^\vee,\nu\rangle=\langle\lambda^\vee,\nu\rangle-\sum_{i=1}^\ell a_i\langle\al_i^\vee,\nu\rangle\in \langle\lambda^\vee,\nu\rangle+\frac 12\bZ,
\]
since  $\nu\in \CC_0^{\frac 12}$ and hence by definition $\langle\al_i^\vee,\nu\rangle\in \frac 12\bZ$. The claim follows.
\end{proof}

\begin{prop}\label{p:half-other} Let $G$ and the rest of the notation be as in Theorem \ref{t:half-adjoint}. 
\begin{enumerate} 
\item[(i)] If $G=PGL(n)$, $n$ even, then $\frac 12\omega_{\frac n2}$ is not half-integral for the defining representation $V(\omega_1^\vee)$;
\item[(ii)] If $G=SO(2n+1)$, then $\frac 12 \omega_1$ is not half-integral for the defining representation $V(\omega_n^\vee)$;
\item[(iii)] If $G=PSp(2n)$, then $\frac 12 \omega_n$ is not half-integral for the spin representation $V(\omega_1^\vee)$;
\item[(iv)] If $G=PSO(2n)$, then $\frac 12 \omega_n$ is not half-integral for the spin representation $V(\omega_1^\vee)$, while $\frac 12\omega_1$ and $\frac 12\omega_2$ are not half-integral for the defining representation $V(\omega_n^\vee)$;
\item[(v)] If $G=E_7$, then $\frac 12\omega_7$ is not half-integral for the $56$-dimensional fundamental representation $V(\omega_7^\vee)$.
\end{enumerate}
\end{prop}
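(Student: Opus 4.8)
The plan is to derive all five assertions directly from Lemma~\ref{l:half-other}. In each case the point $\nu$ under consideration --- $\frac12\omega_{n/2}$ in (i), $\frac12\omega_1$ in (ii), $\frac12\omega_n$ in (iii), $\frac12\omega_n$ as well as $\frac12\omega_1,\frac12\omega_2$ in (iv), and $\frac12\omega_7$ in (v) --- already lies in $\CC_0^{1/2}$, as observed in the proof of Theorem~\ref{t:half-adjoint}. So by Lemma~\ref{l:half-other}, ``$\nu$ is not half-integral for $V(\lambda^\vee)$'' is equivalent to the single numerical statement $\langle\lambda^\vee,\nu\rangle\notin\frac12\bZ$, and I will check that in fact $\langle\lambda^\vee,\nu\rangle\in\{\frac14,\frac34\}$ in each case. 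The whole proof thus reduces to five short pairing computations carried out in the coordinates already fixed in the subsections of \S\ref{classificationsection}: standard $\epsilon$-coordinates in type $A$, and the Bourbaki $\bR^n$ or $\bR^8$ coordinates otherwise.

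First I would record the highest (co)weights $\lambda^\vee$ in those coordinates. For the three vector-type cases --- $\lambda^\vee=\omega_1^\vee$ for $PGL(n)$ and $\lambda^\vee=\omega_n^\vee$ for both $SO(2n+1)$ and $PSO(2n)$ --- the lists of simple (co)roots give $\omega_1^\vee=\epsilon_1$, resp.\ $\omega_n^\vee=\epsilon_n$ (equivalently, the weights of $V(\lambda^\vee)$ are the $\pm\epsilon_i$, or, in type $A$, the projections of the $\epsilon_i$). For the two spin-type cases --- $\lambda^\vee=\omega_1^\vee$ for $PSp(2n)$ and $\lambda^\vee=\omega_1^\vee$ for $PSO(2n)$ --- one finds likewise $\omega_1^\vee=\frac12(\epsilon_1+\dots+\epsilon_n)$, the usual highest weight of a (half-)spin representation. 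For $E_7$ I would instead use that, in the $\bR^8$ coordinates of the $E_7$ subsection (in which every coroot has squared length $2$), the pairing $\langle\omega_7^\vee,\omega_7\rangle$ equals $(\omega_7,\omega_7)=1+\frac14+\frac14=\frac32$, i.e.\ the $(7,7)$-entry of the inverse Cartan matrix of $E_7$.

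Next I substitute the explicit points, taken verbatim from the proof of Theorem~\ref{t:half-adjoint}: in type $A$, $\frac12\omega_{n/2}=(\frac14,\dots,\frac14,-\frac14,\dots,-\frac14)$; in the other classical cases $\frac12\omega_1=\frac14(\epsilon_1+\dots+\epsilon_n)$, $\frac12\omega_2=\frac14(-\epsilon_1+\epsilon_2+\dots+\epsilon_n)$, and $\frac12\omega_n=\frac12\epsilon_n$; and for $E_7$ the point is $\frac12\omega_7$. Pairing these against the (co)weights of the previous step gives $\langle\omega_1^\vee,\frac12\omega_{n/2}\rangle=\frac14$ in (i); $\langle\omega_n^\vee,\frac12\omega_1\rangle=\frac14$ in (ii); $\langle\omega_1^\vee,\frac12\omega_n\rangle=\frac14$ in (iii); $\langle\omega_1^\vee,\frac12\omega_n\rangle=\langle\omega_n^\vee,\frac12\omega_1\rangle=\langle\omega_n^\vee,\frac12\omega_2\rangle=\frac14$ in (iv); and $\langle\omega_7^\vee,\frac12\omega_7\rangle=\frac34$ in (v). Since none of $\frac14,\frac34$ lies in $\frac12\bZ$, Lemma~\ref{l:half-other} yields the conclusion in each of (i)--(v).

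The only real difficulty here is bookkeeping rather than mathematics: one must match carefully the node-numbering conventions in the five subsections of \S\ref{classificationsection} with the dual-group labelling of the table preceding Definition~\ref{mwf}, so as to be certain that $V(\lambda^\vee)$ really is the vector / spin / $56$-dimensional representation claimed and that each $\omega_j$ is the fundamental weight claimed; in the $E_7$ case one must also confirm that the ambient form is normalized so that $\langle\omega_7^\vee,\omega_7\rangle$ is computed as the Euclidean square-length of $\omega_7$. Once these identifications are fixed each computation is a single line. As an alternative one could bypass the coweights altogether: each $V(\lambda^\vee)$ occurring here is minuscule, so all its weights lie in $W\lambda^\vee$ and half-integrality reduces to $\langle\lambda^\vee,\nu\rangle\in\frac12\bZ$ --- again exactly the content of Lemma~\ref{l:half-other}.
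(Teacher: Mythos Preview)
Your proposal is correct and follows essentially the same approach as the paper: reduce via Lemma~\ref{l:half-other} to the single pairing $\langle\lambda^\vee,\nu\rangle$, then compute it in the explicit coordinates of \S\ref{classificationsection} to obtain $\tfrac14$ in cases (i)--(iv) and $\tfrac34$ in case (v). The paper's own proof records exactly these five values without the intermediate coordinate discussion you supply, but the argument is the same.
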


\begin{proof} In light of Lemma \ref{l:half-other}, we only need to check the pairing of the listed points with the corresponding highest weight. In the notation from Theorem \ref{t:half-adjoint} and the subsequent subsections, the relevant pairings are as follows:
\begin{enumerate}
\item[(i)] $\langle \omega_1^\vee, \frac 12\omega_{\frac n2}\rangle =\frac 14$;
\item[(ii)] $\langle \omega_n^\vee, \frac 12\omega_1\rangle =\frac 14$;
\item[(iii)] $\langle \omega_1^\vee, \frac 12\omega_n\rangle =\frac 14$;
\item[(iv)] $\langle \omega_1^\vee, \frac 12\omega_n\rangle =\langle \omega_n^\vee, \frac 12\omega_1\rangle =\langle \omega_n^\vee, \frac 12\omega_2\rangle =\frac 14$;
\item[(v)] $\langle \omega_7^\vee, \frac 12\omega_7\rangle=\frac 34$.
\end{enumerate}

\end{proof}

We can now state our conclusions.

\begin{cor}\label{c:half-integral} Let $s=s_c s_\nu$ be a Satake parameter as in (\ref{e:polar}) and let $L(s)$ be the corresponding spherical $G(k)$-representation. Suppose $L(s)$ is generic.
\begin{enumerate}
\item If $L(s)$ is unitary, then $s$ is half-integral with respect to the adjoint $G^\vee$-representation and the fundamental $G^\vee$-representations listed in Proposition \ref{p:half-other} if and only if $\nu=0$, i.e., if and only if $L(s)=L(s_c)$ is tempered.
\item If there exists a coroot $\alpha^\vee$ such that $\langle \alpha^\vee,s\rangle=q^m$ for some $m\in \bZ\setminus\{0\}$, then $L(s)$ is not unitary.
\end{enumerate}
\end{cor}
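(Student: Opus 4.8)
The plan is to reduce both statements to the explicit classification of the generic spherical unitary dual for \emph{real} Satake parameters (Theorems \ref{t:alcoves} and \ref{t:half-adjoint} and Proposition \ref{p:half-other}), via the Barbasch--Moy reduction to real infinitesimal character \cite{BM}. First I would unwind the hypotheses into conditions on $\nu$. Writing $s=s_cs_\nu$ as in \eqref{e:polar}, for a weight $\mu^\vee$ of a representation $\tau$ of $G^\vee$ one has $\langle\mu^\vee,s_c\rangle$ on the unit circle and $|\langle\mu^\vee,s\rangle|=q^{\langle\mu^\vee,\nu\rangle}$, so ``$s$ is half-integral with respect to $\tau$'' means precisely ``$\langle\mu^\vee,\nu\rangle\in\tfrac12\bZ$ for every weight $\mu^\vee$ of $\tau$'', a condition on $\nu$ alone; once it holds for the adjoint representation, Lemma \ref{l:half-other} reduces it, for each $V(\lambda^\vee)$, to the single condition $\langle\lambda^\vee,\nu\rangle\in\tfrac12\bZ$. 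Likewise $\langle\alpha^\vee,s\rangle=q^m$ forces $\langle\alpha^\vee,s_c\rangle=1$ and $\langle\alpha^\vee,\nu\rangle=m$. Since $L(s)$ depends only on the $W$-orbit of $s$, I would conjugate $(s_c,\nu)$ so that $\nu\in\CC_0$ is dominant; every hypothesis in sight is $W$-invariant (those on half-integrality because the weight set of $\tau$ is $W$-stable), and after conjugation the coroot in (2) may be taken positive with $\langle\alpha^\vee,\nu\rangle=|m|\in\bZ_{>0}$.

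Now the reduction. As $G$ is adjoint, $G^\vee$ is simply connected, so $M^\vee:=Z_{G^\vee}(s_c)$ is a connected reductive subgroup of $G^\vee$ containing $T^\vee$; let $M$ be the split $k$-group dual to $M^\vee$. By \cite{BM}, the unitarity and the genericity of $L(s)$ are equivalent to the unitarity and genericity of the spherical representation $L_M(s_\nu)$ of $M(k)$ attached to the now \emph{real} parameter $s_\nu$. The coroots of $G$ that vanish on $s_c$ --- in particular $\alpha^\vee$ --- are exactly the coroots of $M$, so half-integrality with respect to the adjoint representation of $G^\vee$ restricts to half-integrality with respect to that of $M^\vee$. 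Finally, the combinatorial data entering Theorems \ref{t:alcoves} and \ref{t:half-adjoint} (the alcoves, the arrangement $\alpha^\vee=m$, the hermitian locus, the classification of $\CU^g_0$) depend only on the root system, so these results apply to $M$ one simple factor at a time, the central torus of $M$ imposing no constraint.

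With this in place both statements are short. For (2): by the irreducibility criterion recalled before Theorem \ref{t:alcoves}, genericity of $L_M(s_\nu)$ forces $\langle\beta^\vee,\nu\rangle\neq 1$ for every positive coroot $\beta^\vee$ of $M$, so together with unitarity $\nu\in\CU^g_0(M)$; by Theorem \ref{t:alcoves} the latter is a union of alcoves, hence $\langle\beta^\vee,\nu\rangle\notin\bZ$ for all coroots $\beta^\vee$ of $M$, contradicting $\langle\alpha^\vee,\nu\rangle=|m|\in\bZ_{>0}$. Therefore $L(s)$ is not unitary. (When $|m|=1$ one can instead note directly that $X(s)$ is reducible and its spherical constituent is not generic, against the standing hypothesis.) For (1), the implication $\nu=0\Rightarrow$ ``half-integral and tempered'' is clear: all pairings $\langle\mu^\vee,0\rangle$ vanish, and $L(s_c)$ is a constituent of the principal series unitarily induced from the unitary character $\chi_{s_c}$, hence tempered by the definition in \S\ref{iotatempered} (equivalently Lemma \ref{temperedprin}). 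Conversely, half-integrality together with generic unitarity places $\nu$ in $\CU^{g,1/2}_0(M)$; by Theorem \ref{t:half-adjoint}, applied to each simple factor of $M$, either $\nu=0$ or $\nu$ restricts to one of the exceptional points $\tfrac12\omega_j$ on some simple factor. The half-integrality hypotheses for $G^\vee$ --- the adjoint representation together with the representations of Proposition \ref{p:half-other} --- are designed to exclude each such possibility: in the end one exhibits a weight $\mu^\vee$ with $\langle\mu^\vee,\nu\rangle$ in $\tfrac14\bZ\setminus\tfrac12\bZ$, exactly as in the pairings computed in the proof of Proposition \ref{p:half-other}. Hence $\nu=0$, i.e.\ $L(s)=L(s_c)$ is tempered.

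The point demanding the most care is the passage to the pseudo-Levi $M$: one must check that the Barbasch--Moy reduction preserves genericity and carries the half-integrality data across, and --- most delicate for (1) --- verify that the fixed half-integrality hypotheses on $G^\vee$ rule out every nonzero exceptional point of every simple factor of $M^\vee$ that can occur (classical groups or $E_6,E_7,E_8,F_4,G_2$, by Borel--de Siebenthal), in some cases already through the adjoint condition, in others only through the fundamental representations of Proposition \ref{p:half-other} after restriction. Carrying this out amounts to revisiting, factor by factor, the case analysis of \S\ref{classificationsection} for simple adjoint groups; the remainder of the argument is formal.
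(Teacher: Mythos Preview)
Your route differs from the paper's and leaves a genuine gap.

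The paper's proof never leaves $G$. It notes that half-integrality of $s$ with respect to any $V$ is the same as half-integrality of $\nu$ (since $|\mu^\vee(s_c)|=1$), takes $\nu$ dominant, and then invokes directly that $L(s)$ generic and unitary forces $\nu\in\CU_0^g$, the generic spherical unitary locus for \emph{real} parameters of $G$. From there Theorem~\ref{t:half-adjoint} and Proposition~\ref{p:half-other}---both stated and proved for $G$ itself---give $\nu=0$, and Proposition~\ref{p:integral-unitary} gives part~(2). No pseudo-Levi enters.

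You instead reduce via Barbasch--Moy to the pseudo-Levi $M$ with $M^\vee=Z_{G^\vee}(s_c)$ and attempt to apply Theorem~\ref{t:half-adjoint} and Proposition~\ref{p:half-other} to the simple factors of $M$. This is where the argument breaks: the hypotheses of the corollary fix a specific finite list of $G^\vee$-representations (the adjoint, together with those in Proposition~\ref{p:half-other}), chosen precisely so as to kill the nonzero points of $\CU_0^{g,1/2}(G)$. There is no reason this fixed list, upon restriction to $M^\vee$, contains the representations needed to kill the nonzero points of $\CU_0^{g,1/2}$ for each simple factor of $M$. Concretely: for $G=E_8$ the list contains only the adjoint representation, but a pseudo-Levi of $E_8$ can have an $E_7$ factor, and the exceptional point $\tfrac12\omega_7$ of $E_7$ is eliminated in Proposition~\ref{p:half-other}(v) only via the $56$-dimensional $E_7$-representation, which does not arise by restricting an $E_8$-representation. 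Whether the $E_8$-coroots outside $E_7$ already force $\langle\alpha^\vee,\nu\rangle\notin\tfrac12\bZ$ for such $\nu$ (taking into account the companion $A_1$ component) is a separate computation, and this must be repeated for every simple factor of every Borel--de Siebenthal subgroup of each $G^\vee$. Your assertion that this ``amounts to revisiting, factor by factor, the case analysis of \S\ref{classificationsection}'' is therefore not accurate: \S\ref{classificationsection} matches the list in Proposition~\ref{p:half-other} to the exceptional points of $G$, not of its pseudo-Levis, so what you propose is a new and substantially longer case analysis that you have not carried out.
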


\begin{proof}
For (1), clearly $s$ is half-integral with respect to a given $G^\vee$-representation $V$ if and only if $\nu$ is. Without loss of generality, we may assume that $\nu$ is dominant. If $L(s)$ is assumed generic and unitary, we must have $\nu\in \CU_0^g$. By Theorem \ref{t:half-adjoint}, $\nu\in \CU_0^{g,\frac 12}$, a set that was determined explicitly. Using Proposition \ref{p:half-other}, no nonzero point in $ \CU_0^{g,\frac 12}$ can be half-integral for the listed fundamental representations.
The converse is obvious.

Part (2) follows at once from Proposition \ref{p:integral-unitary}.
\end{proof}

\section{Nonsplit unramified groups}\label{s:nonsplit}

In this section, let $G$ be a quasi-simple {\it unramified} group of adjoint type over the nonarchimedean local field $k$. This means \cite[p. 135]{Ca} that
\begin{enumerate}
\item[(a)] $G$ is quasi-split over $k$, and
\item[(b)] there exists an unramified extension $k'$ of $k$ of finite degree $d$ such that $G$ splits over $k'$.
\end{enumerate}
For applications to the automorphic setting, these are the groups that need to be considered. In the previous section, we proved the unitarity results in the case when $G$ is $k$-split. Now, we show how to deduce the analogous results for $G$ unramified from the split case. We follow the approach of \cite[\S6.1-6.2]{BC-ajm}. We note that the assumption that $k$ has characteristic $0$ stated in \cite[\S1.3]{BC-ajm} is unnecessary.

Let $A$ be a maximally split torus of $G(k)$ and $M=C_{G(k)}(A)$, $W(G,A)=N_{G(k)}(A)/M$. Let $K$ be a special maximal compact subgroup attached via the Bruhat-Tits building to $A$. Let $B\supset A$ be a $k$-rational Borel subgroup. Let $X^*(M)$ and $X_*(M)$ denote the lattices of algebraic characters and cocharacters of $M$, respectively. Let $\langle~,~\rangle$ be their
natural pairing. Define the valuation map as before
\begin{equation*}
  \begin{aligned}
\mathsf{val}_M:&~M\to X_*(M),\langle\lambda,\mathsf{val}_M(m)\rangle=\mathsf{val}_k(\lambda(m)),\text{ for all } m\in M,\ \lambda\in X^*(M).    
  \end{aligned}
\end{equation*}
Set $^0\!M=\ker v_M$ and let $\Lambda(M)$ be the image of $\mathsf{val}_M.$ 

The group of unramified characters $\widehat {M}^u$ of $M$ (i.e., characters trivial on $^0\!M$) can be identified with the complex algebraic torus $T'^{\vee}=\text{Spec}~\bC[\Lambda(M)]$. 
For every $s\in T'^{\vee}$, let  $\chi_s\in \widehat M^u$ be the corresponding unramified character and let $X(s)=i_{B(k)}^{G(k)}(\chi_s)$ denote the  unramified principal series. 

As before, denote by $L(s)$ the unique irreducible $K$-spherical subquotient. We have
a  one-to-one correspondence $s\leftrightarrow L(s)$ between $W(G,A)$-orbits in $T'^{\vee}$ and irreducible $K$-spherical representations of $G(k)$.

We need to express this bijection in terms of $^L\!G$.  Let
$\Psi(G^\vee)=(X^*(T^\vee),X_*(T^\vee),\Phi^\vee,\Phi)$ be the root datum of $G^\vee$, and let $W$ be its Weyl group.
The inner class of $G$ defines a  homomorphism $\tau:
Gal(\overline k/k)\to \mathrm{Aut}(\Psi(G))$. Given our assumptions of $G$, we know that the image
$\tau(Gal(\overline k/k))\subset  \mathrm{Aut}(\Psi(G))$ is a cyclic group 
generated by an automorphism of order $d$ ($d\in\{2,3\}$) which, by abuse of notation, we also call
$\tau.$ Fix a choice of 
root vectors in $\mathfrak g^\vee$, $X_{\alpha^\vee},$ for $\alpha\in \Phi.$ The automorphism $\tau$ maps the root space
of $\alpha^\vee$ to the root space of $\tau(\alpha^\vee).$ We can normalize $\tau$ so
that 
$\tau(X_{\alpha^\vee})=X_{\tau(\alpha^\vee)}$, for all $\alpha$.
Hence $\tau$ induces an automorphism of $G^\vee$. 

Two elements $x_1,x_2\in G^\vee$ are called {\it $\tau$-conjugate} if there
exists $g\in G$ such that $x_2=g\cdot x_1\cdot \tau(g^{-1}).$ For a
subset $S\subset G$, denote
\[N_G(S\tau)=\{g\in G:~ g\cdot S\cdot \tau(g^{-1})\subset S\}.\]

 The construction of the L-group is such that we have (see
 \cite[section 6]{Bo}):
\begin{equation}
X^*(T'^{\vee})=X^*(T^\vee)^\tau,\quad W(G,A)=W^\tau.
\end{equation}
In particular, we have an inclusion $X^*(T'^{\vee})\hookrightarrow X^*(T^\vee)$,
which gives a surjection $\eta:T^\vee\to T'^{\vee}.$ 
We recall that by \cite[Lemma 6.4, Proposition 6.7]{Bo}, the map 
\begin{equation}
\eta': T^\vee\rtimes\langle\tau\rangle\to T'^{\vee},\quad \nu'((s,\tau))=\nu(s),
\end{equation}
induces a bijection of $(N(T^\vee\tau),\tau)$-conjugacy classes of elements
in $T^\vee$ (hence of semisimple $\tau$-conjugacy classes in $G^\vee$) and $W^\tau$-conjugacy classes of elements in $T'^{\vee}.$

Let
\begin{equation}G^\vee(\tau)=\{g\in G^\vee: \tau(g)=g\}.\end{equation} 

This is a connected reductive group (recall that $G^\vee$ is assumed to be simply connected).  Let  $G(\tau)$ denote the split $k$-adic group whose Langlands dual  is $G^\vee(\tau)$. The explicit cases are listed in Table \ref{t:1}.

\begin{table}[h]\caption{Nonsplit quasisplit unramified $G$ and corresponding split $G(\tau)$.\label{t:1}}
\begin{tabular}{|c|c|c|c|}
\hline
Type of $G$ &Label in \cite{Ti} &Order of $\tau$ &Type of $G(\tau)$\\
\hline
$A_{2n-1}$ &$^2\!A_{2n-1}'$ &$2$ &$B_n$\\
\hline
$A_{2n}$ &$^2\!A_{2n}'$ &$2$ &$C_n$\\
\hline
$D_n$ &$^2\!D_n$ &$2$ &$C_{n-1}$\\
\hline
$D_4$ &$^3\!D_4$ &$3$ &$G_2$\\
\hline
$E_6$ &$^2\!E_6$ &$2$ &$F_4$\\
\hline
\end{tabular}
\end{table}

As in section \ref{classificationsection}, for every $\nu\in \mathfrak t_{\mathbb R}'^\vee=(\mathfrak t_{\mathbb R}^\vee)^\tau$, let $X^G(\nu)=X({\exp(\nu\log(q)})$ be the unramified principal series for $G(k)$. Let also $X^{G(\tau)}(\nu)$ be the unramified principal series for $G(\tau)(k)$. Let $L^G(\nu)$ and $L^{G(\tau)}(\nu)$ be the corresponding spherical subquotients. The following statement is a particular case of \cite[Corollary 6.2.6]{BC-ajm}

\begin{thm}\label{t:quasi-corr} For every $\nu\in \mathfrak t_{\mathbb R}'^\vee$, the irreducible representation $L^G(\nu)$ is (generic) unitary if and only if $L^{G(\tau)}(\nu)$ is (generic) unitary.
\end{thm}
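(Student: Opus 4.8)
The plan is to reduce both the unitarity and the irreducibility (= genericity) of $L^G(\nu)$ to an identity of intertwining operators that can be matched, case by case using Table \ref{t:1}, against the corresponding operators for the split group $G(\tau)$; this is the route of \cite[\S6.1--6.2]{BC-ajm}, which we follow, with the added task of checking that its characteristic-zero hypothesis is inessential. First I would note that $G(\tau)$ is of type $B_n$, $C_n$, $C_{n-1}$, $G_2$ or $F_4$ (Table \ref{t:1}) and that the relative Weyl group $W(G,A)=W^\tau$ is of the same type, so its longest element acts by $-1$ on $\mathfrak t_{\mathbb R}'^\vee=(\mathfrak t_{\mathbb R}^\vee)^\tau$; hence for every real $\nu$ both $L^G(\nu)$ and $L^{G(\tau)}(\nu)$ carry a nonzero invariant Hermitian form, unique up to a real scalar. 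On each side that form is, up to a positive scalar, the one induced on the image $L^G(\nu)$ by the standard long intertwining operator $\mathcal A^G(\nu)\colon X^G(\nu)\to X^G(-\nu)$, so $L^G(\nu)$ is unitary precisely when $\mathcal A^G(\nu)$ is positive semidefinite on every $K$-isotypic subspace, and $L^G(\nu)$ is generic (i.e.\ $X^G(\nu)=L^G(\nu)$) precisely when $\mathcal A^G(\nu)$ is injective, which in turn happens exactly when $\langle\alpha^\vee,\nu\rangle\ne 1$ for all positive coroots $\alpha^\vee$ of the relative root system. The same statements hold verbatim for $G(\tau)$.

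The core step is then to factor $\mathcal A^G(\nu)$ along a reduced word for the longest element into rank-one intertwining operators indexed by the positive coroots $\alpha^\vee$ of the relative system; the $\alpha^\vee$-factor is normalized by the Macdonald $c$-function and depends only on $\langle\alpha^\vee,\nu\rangle$ and on the Bruhat--Tits parameters $q_{\alpha^\vee},q_{\alpha^\vee/2}$ attached to $\alpha^\vee$. Since $\nu$ is $\tau$-fixed, $\langle\alpha^\vee,\nu\rangle$ is constant along each $\tau$-orbit of coroots, so $\mathcal A^G(\nu)$ and its signature are expressed through a product of ``orbit factors'' $c_{\mathcal O}(\nu)$. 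Each $\tau$-orbit $\mathcal O$ corresponds to a single coroot $\overline{\mathcal O}^\vee$ of $G^\vee(\tau)=(G^\vee)^\tau$ --- a $\tau$-fixed coroot, a sum of pairwise orthogonal coroots, or, for the orbits spanning a subsystem of type $A_2$ (and the triality orbits in the ${}^3D_4$ case), the appropriate folded coroot --- and, feeding in the tabulated parameters for each entry of Table \ref{t:1}, one checks that $c_{\mathcal O}(\nu)$ equals the corresponding factor of the equal-parameter $c$-function of $G(\tau)$, and that the two families of reducibility walls $\{\langle\alpha^\vee,\nu\rangle=1\}$ line up. Thus $\mathcal A^G(\nu)$ and $\mathcal A^{G(\tau)}(\nu)$ have the same signature on corresponding $K$-types and are simultaneously injective, which gives both equivalences. (Equivalently, one may phrase this as a $*$-isomorphism between the graded affine Hecke algebra controlling the spherical block of $G(k)$ at $\nu$ and that of $G(\tau)(k)$, and invoke Barbasch--Moy's reduction \cite{BM} of spherical unitarity to the Hecke algebra.) As for the characteristic, the hypothesis $\mathrm{char}\,k=0$ in \cite{BC-ajm} is used only through the equivalence between the spherical block and Hecke-algebra modules and through Lusztig's reduction to graded Hecke algebras, both now available for unramified quasi-split groups over any nonarchimedean local field, so it may be dropped.

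The hard part will be the orbit-by-orbit comparison: it requires the precise Bruhat--Tits parameters of each quasi-split unramified $G$ in Table \ref{t:1}, and it genuinely distinguishes the non-reduced relative root system of ${}^2A_{2n}$ (the source of the $PSU(2n+1)$ restriction handled elsewhere in the paper) and the triality case ${}^3D_4$ from the remaining entries; pinning down the normalizations exactly --- rather than only up to an ambiguity that could spoil a positivity statement --- is the delicate point.
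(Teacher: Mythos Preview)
The paper does not prove this theorem; it simply records it as a particular case of \cite[Corollary 6.2.6]{BC-ajm} and remarks that the characteristic-zero hypothesis there is unnecessary. Your proposal is an attempt to sketch that argument, but the main line you present has a gap. You claim that matching the Macdonald $c$-function factors $c_{\mathcal O}(\nu)$ for $G$ and $G(\tau)$ yields that $\mathcal A^G(\nu)$ and $\mathcal A^{G(\tau)}(\nu)$ ``have the same signature on corresponding $K$-types.'' But the $c$-function is only the \emph{scalar} by which the long intertwining operator acts on the one-dimensional $K$-spherical line; it says nothing about the signature on the remaining $K$-isotypic components of $X^G(\nu)$, and unitarity of $L^G(\nu)$ requires positivity on all of them. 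Worse, the phrase ``corresponding $K$-types'' has no meaning here: the hyperspecial maximal compacts of $G(k)$ and $G(\tau)(k)$ are unrelated groups, and there is no bijection of their irreducible representations along which one could compare signatures. (For the genericity equivalence alone your $c$-function comparison is adequate, since irreducibility of $X^G(\nu)$ is governed by the vanishing locus of $c$; it is the unitarity half that fails.)

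The argument that actually works is the one you relegate to a parenthetical. Via Barbasch--Moy \cite{BM}, unitarity of $L^G(\nu)$ is equivalent to unitarity of the associated Iwahori--Hecke algebra module with its natural $*$-structure; Lusztig's reduction theorems then pass this to a graded affine Hecke algebra; and finally one exhibits, case by case from Table \ref{t:1}, a $*$-isomorphism between the graded Hecke algebra attached to $G(k)$ (with its possibly unequal parameters, including the non-reduced ${}^2\!A_{2n}$ case) and that of the split group $G(\tau)(k)$. This is precisely the mechanism of \cite[\S6]{BC-ajm}, and it bypasses the $K$-type matching problem entirely by working in a module category where the two sides are literally equivalent. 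You should promote that route to the main argument and drop the claim that equality of $c$-function factors controls signatures on all $K$-types. Your remark on the characteristic hypothesis is correct and matches the paper's note.
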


From this we deduce immediately the following analogue of Corollary \ref{c:half-integral}.

\begin{cor}\label{c:unramified}
Suppose $G$ is a quasi-simple unramified nonsplit adjoint $k$-group and that $G$ is not the quasisplit form of $PSU(2n+1)$. Let $s=s_c s_\nu\in T'^\vee$ be a Satake parameter and $L^G(s)$ the corresponding spherical $G(k)$-representation. Suppose $L^G(s)$ is generic and unitary. Then $s$ is half-integral with respect to all finite-dimensional representations of $G^\vee\rtimes\langle \tau\rangle$ if and only if $\nu=0$, i.e., if and only if $L^G(s)=L^G(s_c)$ is tempered.
\end{cor}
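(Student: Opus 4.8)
The plan is to deduce the statement from the split case, Corollary~\ref{c:half-integral}, by means of the correspondence of Theorem~\ref{t:quasi-corr}. The ``if'' direction is immediate: if $\nu=0$ then $\langle\mu,\nu\rangle=0$ for every weight $\mu$, so $s$ is half-integral for every representation, and $L^G(s)=L^G(s_c)$ is tempered, the spherical tempered representations being exactly those with vanishing real part. For the ``only if'' direction, let $G(\tau)$ denote the split adjoint $k$-group attached to $G$ in Table~\ref{t:1}, so that $G(\tau)^\vee=\hG(\tau)$, a connected subgroup of the simply connected group $\hG$ whose maximal torus has real Lie algebra $(\ft^\vee_\RR)^\tau$, the space in which $\nu$ lives. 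By Theorem~\ref{t:quasi-corr}, $L^{G(\tau)}(\nu)$ is again generic and unitary, and on each side temperedness is equivalent to $\nu=0$. So it suffices to show that half-integrality of $s$ with respect to \emph{all} finite-dimensional representations of $\LGr=\hG\rtimes\langle\tau\rangle$ forces $\nu$ to be half-integral with respect to the adjoint representation of $G(\tau)^\vee$ and the fundamental representations of $G(\tau)^\vee$ listed in Proposition~\ref{p:half-other}; Corollary~\ref{c:half-integral}(1) then gives $\nu=0$.

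The transfer of half-integrality I would carry out with the following device. For any finite-dimensional representation $W$ of $\hG$, the representation $\Ind_{\hG}^{\LGr}W$ --- equivalently $W\oplus{}^\tau W\oplus\cdots$, or $W$ itself when $W\cong{}^\tau W$ --- is a genuine representation of $\LGr$ whose restriction to $\hG(\tau)$ contains $W|_{\hG(\tau)}$; hence every weight of $W|_{\hG(\tau)}$ occurs among the weights of a representation of $\LGr$, and half-integrality of $s$ for the latter forces half-integrality of $\nu$ for the former. Applying this to $W=\mathrm{Ad}(\mathfrak g^\vee)$ (which extends to $\LGr$, since $\tau$ acts on $\mathfrak g^\vee$), whose restriction to $\hG(\tau)$ contains the adjoint representation of $\hG(\tau)$ --- the restricted roots of $\hG$ comprise, in particular, all roots of $\hG(\tau)$, i.e.\ all coroots of $G(\tau)$ --- I obtain $\langle\beta^\vee,\nu\rangle\in\frac12\bZ$ for every coroot $\beta^\vee$ of $G(\tau)$, and hence by Theorem~\ref{t:half-adjoint} that $\nu$ lies in the explicit finite set $\CU^{g,\frac12}_0$ attached to $G(\tau)$.

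It remains to eliminate the nonzero members of $\CU^{g,\frac12}_0$. If $G(\tau)$ is of type $G_2$ ($G={}^3\!D_4$) or $F_4$ ($G={}^2\!E_6$), Theorem~\ref{t:half-adjoint} already gives $\CU^{g,\frac12}_0=\{0\}$, so there is nothing more to do. If $G(\tau)=SO(2n+1)$ ($G={}^2\!A_{2n-1}$), then $\hG=SL(2n)$, $\hG(\tau)=Sp(2n)=G(\tau)^\vee$, and the only nonzero member is $\frac12\omega_1$; I would take $W$ the standard $2n$-dimensional representation of $SL(2n)$, observe that ${}^\tau W\cong W^*$ so that $W\oplus W^*$ extends to $\LGr$ and restricts to twice the standard representation $V(\omega_n^\vee)$ of $Sp(2n)$, and conclude by Proposition~\ref{p:half-other}(ii) that $\frac12\omega_1$ is not half-integral for $V(\omega_n^\vee)$. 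If $G(\tau)=PSp(2(n-1))$ ($G={}^2\!D_n$), then $\hG=Spin(2n)$, $\hG(\tau)=Spin(2n-1)=G(\tau)^\vee$, and the only nonzero member is $\frac12\omega_{n-1}$; the two half-spin representations $S^\pm$ of $Spin(2n)$ are interchanged by $\tau$, so $S^+\oplus S^-$ extends to $\LGr$ and restricts to twice the spin representation $V(\omega_1^\vee)$ of $Spin(2n-1)$, and Proposition~\ref{p:half-other}(iii) excludes $\frac12\omega_{n-1}$. In every case $\nu=0$, so $L^G(s)=L^G(s_c)$ is tempered.

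The hard point --- and the reason the quasi-split $PSU(2n+1)$ is excluded --- is precisely the case $G={}^2\!A_{2n}$: here $\hG=SL(2n+1)$ and $\hG(\tau)=SO(2n+1)$, so the restriction to $\hG(\tau)$ of every representation of $\LGr$ factors through $SO(2n+1)$, whereas eliminating the surviving nonzero point $\frac12\omega_n$ of $\CU^{g,\frac12}_0$ (for the dual group $Spin(2n+1)$) would require its spin representation, which is invisible in this way; this is consistent with Remark~\ref{PSU}. Beyond this, the genuinely delicate points in the argument are (a) pinning down the exact isogeny type of $\hG(\tau)$ in each row of Table~\ref{t:1}, via Steinberg's description of fixed subgroups of pinned diagram automorphisms, and (b) checking case by case that, under the relevant folding, the weights of the chosen representation of $\LGr$ restrict to those of the required representation of $G(\tau)^\vee$ with compatible pairing conventions (using the surjection $\eta$ of Section~\ref{s:nonsplit}), so that half-integrality transfers between the two without loss. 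I expect (b) to be routine once (a) is in place.
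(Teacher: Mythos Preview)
Your proposal is correct and follows essentially the same route as the paper: transfer to the split group $G(\tau)$ via Theorem~\ref{t:quasi-corr}, use the adjoint representation of $\hG$ (which is $\tau$-stable) to land in $\CU^{g,\frac12}_0$ for $G(\tau)$, and then, in the two classical cases ${}^2\!A_{2n-1}$ and ${}^2\!D_n$, exhibit a representation of $\LGr$ whose restriction to $\hG(\tau)$ contains the fundamental representation needed in Proposition~\ref{p:half-other}. Your packaging via $\Ind_{\hG}^{\LGr}W$ (equivalently $W\oplus{}^\tau W$) is a clean way to make explicit what the paper leaves implicit when it simply names the vector representation of $SL(2n)$ and the spin representations of $Spin(2n)$; in particular your observation that ${}^\tau W\cong W^*$ for the standard representation of $SL(2n)$, and that the half-spins are swapped by $\tau$, is exactly the content behind the paper's one-line assertions.
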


\begin{proof}
The generic spherical $G(k)$-representation $L^G(\nu)$ is unitary, hence by Theorem \ref{t:quasi-corr}, $L^{G(\tau)}(\nu)$ is generic unitary. We wish to apply Theorem \ref{t:half-adjoint} and Proposition \ref{p:half-other} to $L^{G(\tau)}(\nu)$. For this, we need to exhibit finite-dimensional representations of $G^\vee\rtimes \langle\tau\rangle$ such that their restrictions to $G^\vee(\tau)$ contain the adjoint representation of $G^\vee(\tau)$ and the representations listed in Proposition \ref{p:half-other}. 

Firstly, the adjoint  adjoint $G^\vee$-representation is $\tau$-stable and contains in its restriction the adjoint representation of $G^\vee(\tau)$.  Next, it remains to discuss the classical groups listed in Proposition \ref{p:half-other}; in all cases $\tau^2=1$. 

\begin{enumerate}
\item $G^\vee=SL(2n,\bC)$, $G^\vee(\tau)=Sp(2n,\bC)$, $G(\tau)=SO(2n+1,k)$. The vector $G^\vee$-representation $V=\bC^{2n}$  restricts to the defining representation $V(\omega_n^\vee)$ of $G^\vee(\tau)$.
\item $G^\vee=Spin(2n,\bC)$, $G^\vee(\tau)=Spin(2n-1,\bC)$, $G(\tau)=PSp(2n-2,k)$. The two spin $G^\vee$-representations restrict both to the spin representation of $G^\vee(\tau)$.  
\end{enumerate} 
In this way, Proposition \ref{p:half-other} can be applied to $G^\vee(\tau)$ and it follows that $\nu=0$.

\end{proof}

\begin{remark}\label{PSU}
The methods in this section are insufficient to establish the claim in Corollary \ref{c:unramified} in the case when $G$ is the quasisplit form of $PSU(2n+1)$. In that case, $G^\vee=SL(2n+1,\bC)$, $G^\vee(\tau)=SO(2n+1,\bC)$, and $G(\tau)=Sp(2n,k)$. The adjoint representation of $G^\vee$ contains in its restriction the adjoint representation of $G^\vee(\tau)$, which means that the argument in the proof of the corollary implies that the only possible half-integral Satake parameters for $G(k)$ have $\nu=0$ or $\nu=\frac 12\omega_n$, where $\omega_n$ is the $G(\tau)$-weight (the notation as in subsection \ref{sub:psp}). But in the case of $PSp(2n,k)$, the point $\nu=\frac 12\omega_n$ is ruled out by the spin representation of the dual group $Spin(2n+1,\bC)$, see Proposition \ref{p:half-other}(iii), and the spin representation does not descend to $SO(2n+1,\bC)$ (in particular, it can't appear in the restriction of a finite-dimensional $SL(2n+1,\bC)$-representation). 
\end{remark}

\end{document}